 \numberwithin{figure}{section}
\DeclareMathOperator{\SO}{SO}
\DeclareMathOperator{\lt}{lt}
\DeclareMathOperator{\lm}{lm}
\DeclareMathOperator{\lcm}{lcm}
\newcommand{\R}{\mathbb{R}}
\numberwithin{equation}{section}
\newtheorem{theorem}{Theorem}[section]
\newtheorem{proposition}[theorem]{Proposition}
\newtheorem{lemma}[theorem]{Lemma}
\newtheorem{corollary}[theorem]{Corollary}
\newtheorem*{question*}{Question}
\theoremstyle{definition}
\newtheorem*{definition*}{Definition}
\theoremstyle{remark}
\newtheorem{remark}[theorem]{Remark}
\newtheorem*{remark*}{Remark}
\title{On integer distance sets}
\author{Rachel Greenfeld}
\address{Department of Mathematics, Northwestern University, 2033 Sheridan Road, Evanston, IL 60208, USA}
\email{rgreenfeld@northwestern.edu}
\author{Marina Iliopoulou}
\address{Department of Mathematics, National and Kapodistrian University of Athens, Panepistimiopolis 157 84, Athens, Greece}
\email{miliopoulou@math.uoa.gr}
\author{Sarah Peluse}
\address{Department of Mathematics, Stanford University, 450 Jane Stanford Way, Building 380, Stanford, CA 94305, USA}
\email{speluse@stanford.edu}
\begin{document}

\begin{abstract}
  We develop a new approach to address some classical questions
  concerning the size and structure of integer distance sets. Our main
  result is that any integer distance set in the Euclidean plane is either very sparse or has all but an exceedingly small proportion of its points lying on a single line or circle. From this, we deduce a near-optimal lower bound on the diameter of any non-collinear integer distance set of size $n$ and a strong upper bound on the size of any integer distance set in $[-N,N]^2$ with no three points on a line and no four points on a circle.
  \end{abstract}
  \maketitle

\section{Introduction}\label{sec:intro}
A subset $S\subset\mathbf{R}^2$ of the plane is an \textit{integer distance set} if the
Euclidean distance between every pair of points in $S$ is an integer. In 1945, Anning and
Erd\H{o}s~\cite{AnningErdos1945,Erdos1945} showed that any infinite integer distance set
must be contained in a line, but also that there exist non-collinear integer distance sets
of arbitrarily large finite size. They constructed two infinite families of arbitrarily
large non-collinear integer distance sets: one of concyclic sets and one of sets in which
all but one point are collinear. It turns out that all so-far-known integer distance sets, such
as those in~\cite{HarborthKemnitzMoller1993,Huff1948,Peeples1954}, are of a similar
special form: they have all but up to four of their points lying on a single line or
circle.

In this paper, we develop a new approach to the study of integer distance sets that enables us to prove a structure theorem partially explaining this phenomenon, showing that any integer distance set in $[-N,N]^2$ must either be polylogarithmically small or have the vast majority of its points lying on a single line or circle.

\begin{theorem}[Structure theorem]\label{thm:main}
  Let $S\subset[-N,N]^2$ be an integer distance set. 
Then, either
$\left|S \right|=O\left((\log{N})^{O(1)}\right)$,
or else there exists a line or circle $C\subset\mathbf{R}^2$ such that
\begin{equation*}
    \left|S\setminus C\right|=O\left((\log\log{N})^{2}\right).
\end{equation*}
\end{theorem}

\begin{remark} This result can be extended to subsets of $\mathbf{R}^2$ with pairwise rational distances of height at most $N$. We leave the details to the interested reader. \end{remark}

Theorem~\ref{thm:main} is the first-ever general structure theorem for integer distance sets, and we use it to address two classical questions about integer distance sets.

\subsection{Erd\H{o}s's integer distance set problem} 
The first question we address is the question of how large an integer distance set can be if it has no three points on a line and no four points on a circle (see, for example,~\cite[Chapter 13]{Eppstein2018}). This question is often attributed to Erd\H{o}s, though he says in~\cite{Erdos1987} that it is an ``old problem whose origin I cannot trace''. Erd\H{o}s still posed it on numerous occasions (for instance, in~\cite{Erdos1983}), and, in particular, asked whether such a set consisting of seven points exists in the plane. In 2008, Kreisel and Kurz~\cite{KreiselKurz2008} indeed found seven points in the plane with no three of them on a line, no four of them on a circle, and with pairwise integer distances. No larger such set has been found since.

It is an immediate consequence of Theorem~\ref{thm:main} that integer distance sets with
no three points on a line and no four points on a circle must be very sparse.
\begin{corollary}[New upper bound in Erd\H{o}s's integer distance set problem]\label{cor:mainno4}
  Let $S\subset[-N,N]^2$ be an integer distance set with no three points on a line and
  no four points on a circle. Then,
  \begin{equation*}
    \left| S\right|=O\left( (\log{N})^{O(1)}\right).
  \end{equation*}
\end{corollary}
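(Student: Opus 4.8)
The plan is to deduce this immediately from Theorem~\ref{thm:main}, which does all the work. Given an integer distance set $S\subset[-N,N]^2$ with no three points on a line and no four points on a circle, Theorem~\ref{thm:main} produces a line or circle $C\subset\mathbf{R}^2$ for which $|S\setminus C|=O((\log N)^{O(1)})$. The only remaining task is to bound the number of points of $S$ lying \emph{on} $C$, and this is precisely what the general-position hypothesis on $S$ controls.

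The key point is that the two hypotheses are exactly matched to the two possibilities for $C$. If $C$ is a line, then, since no three points of $S$ are collinear, $|S\cap C|\le 2$. If $C$ is a circle, then, since no four points of $S$ are concyclic, $|S\cap C|\le 3$. In either case $|S\cap C|\le 3$, and therefore
\begin{equation*}
  |S| = |S\cap C| + |S\setminus C| \le 3 + O\bigl((\log N)^{O(1)}\bigr) = O\bigl((\log N)^{O(1)}\bigr),
\end{equation*}
as claimed.

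Consequently, there is no genuine obstacle to overcome in proving Corollary~\ref{cor:mainno4}: all of the difficulty has been absorbed into Theorem~\ref{thm:main}, whose proof must establish that a planar integer distance set has all but polylogarithmically many of its points on a single line or circle. Relative to that structure theorem, the two displayed lines above constitute essentially the entire argument for the corollary, and I would expect the author's proof to be of precisely this form.
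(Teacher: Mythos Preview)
Your proof is correct and matches the paper's own treatment exactly: the paper states that Corollary~\ref{cor:mainno4} is ``an immediate consequence of Theorem~\ref{thm:main}'' and does not write out a separate proof. The deduction you give---applying Theorem~\ref{thm:main} and then using the general-position hypotheses to bound $|S\cap C|\le 3$---is precisely the intended argument.
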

The previous best known upper bound for the size of integer distance sets in $[-N,N]^2$
with no three points on a line and no four points on a circle was $O(N)$, which follows
from work of~Solymosi~\cite{Solymosi2003}. It could even be the case that there is a
uniform upper bound on the size of any integer distance set with no three points on a line
and no four points on a circle. Indeed, Ascher, Braune, and
Turchet~\cite{AscherBrauneTurchet2020} proved this conditionally on Lang's conjecture.

\begin{remark} 
    Interestingly,  Erd\H{o}s's integer distance set problem is related to Fuglede's problem on the maximal size of frequency sets $\Lambda\subset \mathbf{R}^2$ whose corresponding system  \[E(\Lambda)=\left\{e^{2\pi i  \lambda\cdot x}\colon \lambda\in \Lambda\right\}\] of exponential functions is orthogonal in $L^2(D)$, where $D=\{x\in \mathbf{R}^2\colon \|x\| < 1\}$ is the unit disk (see \cite{IosevichRudnev2003}, where this relation is explored).
     Fuglede proved that any such $\Lambda$ must be finite \cite{FUGLEDE1974101, FUGLEDE2001267} (see also \cite{IosevichKatzPedersen1999, IosevichKatzTao2001, IosevichRudnev2003} for related results). 

Note that the system $E(\Lambda)$ is orthogonal in $L^2(D)$ if and only if
\[\int_D e^{2\pi i  (\lambda-\lambda')\cdot x} d x =\widehat{\mathbf{1}}_{D}(\lambda'-\lambda)=0\]
 for any pair of distinct points $\lambda,\lambda'$ in $\Lambda$, and recall that $\xi\in \mathbf{R}^2$ is a zero of $\widehat{\mathbf{1}}_D$ if and only if $2\pi \|\xi\|$ is a zero of the Bessel function $J_1$.  Therefore, the system $E(\Lambda)$ is orthogonal in $L^2(D)$ if and only if the distance set of $\Lambda$,
 \begin{equation}\label{eq:distLambda}
     \left\{\|\lambda'-\lambda\| \colon \lambda,\lambda'\in \Lambda\text{ and } \lambda\neq \lambda' \right\},
 \end{equation}
is contained in the zero-set of the function $J_1(2\pi \: \cdot)$.  Using the asymptotic expansion  \cite{AbramowitzStegun64} of the function $J_1(2\pi \: \cdot)$, we have that its positive zeros, and hence the distances \eqref{eq:distLambda}, are of the form 
\[\frac{n}{2}+\frac{1}{8} +O\left(\frac{1}{n}\right),\quad n\in \mathbf{N}.\] 
Since the set $\left\{\frac{n}{2}+\frac{1}{8}\colon n\in\mathbf{Z}\right\}$ is not closed under addition, this then imposes (by, e.g., \cite[Lemma~1]{IosevichKolountzakis2013} and Ptolemy's theorem) geometric constraints on the frequency set $\Lambda$ that are very similar to the constraints in Erd\H{o}s's integer distance set problem, i.e., (approximate) lines and circles are excluded. 
 Using these constraints, bounds on the size of $\Lambda\cap [-N,N]^2$ were established in \cite{IosevichKolountzakis2013,zakharov2024setsorthogonalexponentialsdisk}, with the current record of $O_\epsilon(N^{3/5+\epsilon})$ due to Zakharov in~\cite{zakharov2024setsorthogonalexponentialsdisk}.

 Fuglede conjectured \cite{FUGLEDE1974101} that any frequency set $\Lambda\subset \mathbf{R}^2$ that gives rise to an orthogonal system $E(\Lambda)$ in $L^2(D)$ cannot contain more than three points. Indeed, if $|\Lambda|\geq 4$ for such a set $\Lambda$, then there would be algebraic relations among the distances \eqref{eq:distLambda}, since the $5\times 5$ Cayley--Menger determinant of any planar four-point simplex vanishes.
 This, in turn, would imply non-trivial algebraic relations between zeros of $J_1$, which 
  contradicts the expectation that these zeros are algebraically independent.
  Nevertheless, since the non-existence of algebraic relations among zeros of $J_1$ 
  is yet an open problem in transcendental number theory, this conjecture of Fuglede remains open.
 
 It is plausible, however, that one could significantly improve upon Zakharov's bound by adapting the machinery we introduce in this work, namely, encoding the points of $\Lambda$ as lattice points on an analytic manifold\footnote{For instance, let $\varphi\colon \R\to \R$ be an analytic function whose integer points are the zeros of the Bessel function $J_1(2\pi \: \cdot)$, and let $\mathcal{CM}\colon  \R^6\to \R$ be the $5\times 5$ Cayley--Menger determinant. Fix three distinct points $P_1,P_2,P_3\in \Lambda$. Then, for any point $P\in \Lambda\setminus \{P_1,P_2,P_3\}$, the simplex $(P,P_1,P_2,P_3)$ is assigned to an integer point $(\varphi^{-1})^{\otimes 6}\left(\|P_1-P_2\|,\|P_1-P_3\|,\|P_1-P_4\|,\|P_2-P_3\|,\|P_2-P_4\|,\|P_3-P_4\|\right)$ on the manifold $\mathcal{CM}(\varphi^{\otimes 6})=0$.} and then using the Pila--Wilkie theorem \cite{PilaWilkie2006}. 
\end{remark}

\subsection{Sharp diameter bound}
The second question we address is that of estimating the minimum diameter of a
non-collinear integer distance set of size $n$ (see, for example, Problem 8
of~\cite[Section 5.11]{BrassMoserPach2005}). It is known, thanks to work of
Solymosi~\cite{Solymosi2003} and Harborth, Kemnitz, and
M\"oller~\cite{HarborthKemnitzMoller1993}, that this minimum diameter has
order of magnitude at least $n$ and at most $n^{O(\log\log{n})}$. Theorem~\ref{thm:main} reduces the problem of proving a lower bound for the diameter to
the problem of proving an upper bound for the maximum number of collinear and concyclic
points a non-collinear integer distance set in $[-N,N]^2$ can contain. The
paper~\cite{KurzWassermann2011} of Kurz and Wassermann implicitly contains such a bound
for the number of collinear points.
\begin{proposition}[\cite{KurzWassermann2011}]\label{prop:lines}
   Let $S\subset[-N,N]^2$ be a non-collinear integer distance set. Then, for every
   line $\ell\subset\mathbf{R}^2$, we have
   \begin{equation*}
     \left| S\cap\ell\right|=O\left( N^{O(1/\log\log{N})}\right).
   \end{equation*}
\end{proposition}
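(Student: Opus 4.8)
The plan is to reduce the bound to a count of factorizations of a single positive integer of size $N^{O(1)}$, and then to invoke the classical upper bound on the number of divisors. Fix a line $\ell$; if $|S\cap\ell|\le 1$ there is nothing to prove, so assume $k:=|S\cap\ell|\ge 2$, and, using that $S$ is non-collinear (hence not contained in $\ell$), fix a point $p\in S\setminus\ell$. Let $F$ be the foot of the perpendicular from $p$ to $\ell$, let $h:=|pF|>0$, and for $R\in\ell$ let $x_R\in\mathbf{R}$ denote the signed distance from $F$ to $R$ along $\ell$. Since $|RR'|=|x_R-x_{R'}|$ is an integer for all $R,R'\in S\cap\ell$, the numbers $x_R$ (with $R\in S\cap\ell$) lie in a single coset of $\mathbf{Z}$; letting $R_1$ be the point of $S\cap\ell$ with smallest coordinate and $c:=x_{R_1}$, we may write $x_R=c+n_R$ where $n_R\in\mathbf{Z}_{\ge 0}$, $n_{R_1}=0$, the $n_R$ are pairwise distinct, and $n_R\le\operatorname{diam}(S)=O(N)$. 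By the Pythagorean theorem,
\begin{equation*}
  d_R^2=h^2+(c+n_R)^2\qquad\text{for every }R\in S\cap\ell,\quad\text{where }d_R:=|pR|\in\mathbf{Z}_{\ge 0}.
\end{equation*}

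The key step is to clear denominators. Since $k\ge 2$ we may pick $R_2\in S\cap\ell$ with $m:=n_{R_2}\ge 1$; note $m=O(N)$ and $d_{R_1},d_{R_2}=O(N)$. Subtracting the relation for $R_1$ from that for $R_2$ gives
\begin{equation*}
  c=\frac{d_{R_2}^2-d_{R_1}^2-m^2}{2m}=:\frac{C}{2m},\qquad C\in\mathbf{Z},\quad |C|=O(N^2),
\end{equation*}
and then $h^2=d_{R_1}^2-c^2$ shows that $\beta:=(2mh)^2=4m^2d_{R_1}^2-C^2$ is a \emph{positive} integer with $\beta=O(N^4)$. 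Multiplying the relation for a general $R\in S\cap\ell$ through by $4m^2$ and factoring a difference of squares yields
\begin{equation*}
  \bigl(2md_R-C-2mn_R\bigr)\bigl(2md_R+C+2mn_R\bigr)=\beta .
\end{equation*}
Both factors are positive integers: their product $\beta$ is positive, so they share a sign, and since $2md_R\ge 0$ they cannot both be negative. Moreover $m\neq 0$ and the $n_R$ are pairwise distinct, so the integer $C+2mn_R$ determines $R$; and $C+2mn_R$ is recoverable from the ordered pair of factors above (it is half their difference). Hence distinct points of $S\cap\ell$ give rise to distinct ordered factorizations of $\beta$ into two positive integers.

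Consequently $k=|S\cap\ell|$ is at most the number of ordered factorizations of $\beta$ into two positive integers, which is the number of divisors of $\beta$. By the classical bound on the divisor function (Wigert; see also Ramanujan's work on highly composite numbers), $\max_{n\le x}d(n)=2^{(1+o(1))\log x/\log\log x}$ as $x\to\infty$. Since $\beta=O(N^4)$, so that $\log\beta=O(\log N)$ and $\log\log\beta=\log\log N+O(1)$, this yields $k\le 2^{O(\log N/\log\log N)}=N^{O(1/\log\log N)}$; the case of bounded $N$ is trivial, as then $|S|=O(1)$. This is the asserted bound.

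The only genuinely delicate point, I expect, is the clearing of denominators: a priori the coordinate $c$ and the height $h$ are merely real numbers, and one must use two well-chosen points of $S\cap\ell$ to rewrite everything over the common denominator $2m$ and then verify that the integer $\beta$ so produced is polynomial, rather than exponential, in $N$. Everything else is routine algebra together with the citation of the divisor bound. An essentially equivalent route is to first rotate $S$ so that $\ell$ becomes the $x$-axis and then translate so that $S\cap\ell\subset\mathbf{Z}\times\{0\}$, which leaves the configuration inside a box of side $O(N)$, and to run the same computation with $p=(a,b)$, $b\neq 0$, the rational number $a$ playing the role of $c$. Once $\beta=N^{O(1)}$ is in hand, the divisor bound delivers precisely the exponent $O(1/\log\log N)$ appearing in the statement.
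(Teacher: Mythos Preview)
Your proof is correct and follows essentially the same route as the paper: drop a perpendicular from a point off $\ell$, use two collinear points to rationalize the foot-coordinate with denominator $2m$, factor $(2md_R)^2-(C+2mn_R)^2=\beta$ as a difference of squares, and bound by $\tau(\beta)$ with $\beta=O(N^4)$. One small cosmetic point: where you write ``since $2md_R\ge 0$ they cannot both be negative,'' what you are really using is that the \emph{sum} of the two factors is $4md_R>0$ (strictly, since $p\notin\ell$ forces $d_R>0$); this is harmless. Your injectivity argument (recovering $n_R$ from the ordered pair of factors) is in fact a touch cleaner than the paper's, which instead observes that on one of the two half-lines determined by the foot the quantities $Db_i+D(q_1-m_i)$ are monotone, yielding $\gg K$ distinct divisors rather than exactly $K$; either way one lands on $\tau(N^{O(1)})=N^{O(1/\log\log N)}$.
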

Bat-Ochir~\cite{Bat-Ochir2019} proved an upper bound of the same strength for circles contained in $[-N,N]^2$ with
special radii, which we extend to all circles.
\begin{proposition}\label{prop:main}
  Let $C\subset\mathbf{R}^2$ be a circle and $S\subset C$ be an integer distance set. Then,
  \begin{equation*}
    \left| S\cap[-N,N]^2 \right|=O\left(N^{O(1/\log\log{N})}\right).
  \end{equation*}
\end{proposition}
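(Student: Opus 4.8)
The plan is to use a single well-chosen second reference point on $C$ to pin down each point of $S$ by a quantity that satisfies a binary quadratic Diophantine equation, and then to count solutions via the divisor bound. We may assume $|S\cap[-N,N]^2|\geq 3$, the estimate being trivial otherwise; set $S':=S\cap[-N,N]^2$, and note that all pairwise distances among points of $S'$ are positive integers that are at most $2\sqrt 2\,N$. Fix distinct $P_0,P_1,P_2\in S'$ with $P_1$ not antipodal to $P_0$ on $C$ (at most one point of $C$ is antipodal to $P_0$). Since $P_0,P_1,P_2$ lie on $C$, the circumradius of $\triangle P_0P_1P_2$ equals $r$; while $r$ itself need not be rational, $r^2$ is, with controlled denominator: the circumradius formula and Heron's formula give $4r^2=\bigl(|P_0P_1|\,|P_0P_2|\,|P_1P_2|\bigr)^2/(4K^2)$, where $K$ is the area of $\triangle P_0P_1P_2$ and $16K^2=(|P_0P_1|+|P_0P_2|+|P_1P_2|)(-|P_0P_1|+|P_0P_2|+|P_1P_2|)(|P_0P_1|-|P_0P_2|+|P_1P_2|)(|P_0P_1|+|P_0P_2|-|P_1P_2|)$ is a positive integer (positive because three distinct points of a circle are never collinear). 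Setting $Q:=16K^2$ and $R:=4\bigl(|P_0P_1|\,|P_0P_2|\,|P_1P_2|\bigr)^2$, we have $4r^2=R/Q$ with $Q=O(N^{4})$, $R=O(N^{6})$. After a rigid motion (which changes none of the distances just recorded) we may place the centre of $C$ at the origin with $P_0=(r,0)$; writing each $P\in S'$ as $P=r(\cos\theta_P,\sin\theta_P)$, the identity $|P_0P|^2=2r^2(1-\cos\theta_P)$ yields $\cos\theta_P=(R-2Q|P_0P|^2)/R\in\Q$ for every $P\in S'$.

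The crux is then to squeeze a rigid Diophantine condition out of the presence of $P_1$. Applying the same identity to the chord $P_1P$ gives $\cos(\theta_P-\theta_{P_1})=(R-2Q|P_1P|^2)/R\in\Q$, and expanding $\cos(\theta_P-\theta_{P_1})=\cos\theta_P\cos\theta_{P_1}+\sin\theta_P\sin\theta_{P_1}$ together with $\cos\theta_P,\cos\theta_{P_1}\in\Q$ forces $\sin\theta_P\sin\theta_{P_1}\in\Q$. As $P_1$ is neither $P_0$ nor its antipode, $\sin^2\theta_{P_1}=1-\cos^2\theta_{P_1}$ is a nonzero rational, so $\sin\theta_P/\sin\theta_{P_1}\in\Q$, whence $(1-\cos^2\theta_P)/(1-\cos^2\theta_{P_1})$ is a square in $\Q$. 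Writing $a_P:=|P_0P|$ and simplifying $1-\cos^2\theta_P=4Qa_P^2(R-Qa_P^2)/R^2$, and discarding the at most one $P\in S'$ antipodal to $P_0$, the condition reads
\[
  \frac{a_P^{2}\,(R-Qa_P^{2})}{|P_0P_1|^{2}\,(R-Q|P_0P_1|^{2})}\in(\Q^{\times})^{2}.
\]
Since $R-Q|P_0P_1|^{2}=Q\bigl(4r^{2}-|P_0P_1|^{2}\bigr)$ is a positive integer, it follows that $(R-Qa_P^{2})(R-Q|P_0P_1|^{2})$ is a positive rational square, hence an integer square, so $R-Qa_P^{2}$ has the same squarefree part $f$ as the fixed positive integer $R-Q|P_0P_1|^{2}$. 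Consequently $R-Qa_P^{2}=fh^{2}$ for a positive integer $h$; that is, $(a_P,h)$ is a positive integer solution of
\[
  Qx^{2}+f y^{2}=R .
\]

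To conclude, I would bound the number of such solutions. Multiplying by $Q$ sends a solution to an integer point $(b,y)=(Qx,y)$ on $b^{2}+(Qf)y^{2}=QR$, and the number of these is at most the number of elements of norm $QR$ in the order $\Z[\sqrt{-Qf}]$. Since that order lies inside the ring of integers of the imaginary quadratic field $\Q(\sqrt{-Qf})$, which has at most six units and at most $\tau(QR)$ ideals of norm $QR$ (here $\tau$ denotes the divisor-counting function), this number is at most $6\,\tau(QR)$. As $Q,R=O(N^{O(1)})$, the divisor bound $\tau(m)=m^{O(1/\log\log m)}$ gives $\tau(QR)=O\bigl(N^{O(1/\log\log N)}\bigr)$. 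Finally, any given value of $a_P=|P_0P|$ is attained by at most two points of $S'$, namely the at most two points in which $C$ meets the circle of radius $a_P$ about $P_0$ (which are reflections of one another across the line through $P_0$ and the centre of $C$); hence $|S\cap[-N,N]^2|=|S'|\le 2+2\cdot 6\,\tau(QR)=O\bigl(N^{O(1/\log\log N)}\bigr)$, as required. I expect the two delicate points to be (i) propagating $r^{2}$, rather than $r$, through every step — this is exactly what turns Bat-Ochir's treatment of special radii into a statement about arbitrary circles — and (ii) keeping the count of representations by $Qx^{2}+fy^{2}$ uniform in its coefficients, so that it really is $\tau(\mathrm{poly}(N))$ and no larger.
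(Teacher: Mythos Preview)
Your proof is correct and takes a more direct route than the paper's. The paper splits the work into two halves: Lemma~\ref{lem:specialradius} first reduces, via a scaling-and-``flipping'' argument adapted from Bat-Ochir, to circles of radius $n/(2\sqrt{D})$ with $(n,D)=1$ and $D$ squarefree; only then does Lemma~\ref{lem:circlebound} run the trigonometric step, arriving at the clean equation $x^{2}+Dy^{2}=n_{2}^{2}$ and the bound $\tau(n_{2}^{2})$. You bypass the radius normalisation entirely by letting the denominator $Q=16K^{2}$ of $4r^{2}$ survive as a coefficient of the binary form $Qx^{2}+fy^{2}=R$, and then absorb it through $b=Qx$ and an embedding into the ring of integers of $\Q(\sqrt{-Qf})$ to get the ideal-count bound $\leq 6\,\tau(QR)$. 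The trigonometric core---chord lengths force rational cosines, and a second reference point forces $\sin\theta_{P}/\sin\theta_{P_{1}}\in\Q$---is essentially the same mechanism as in the paper's Lemma~\ref{lem:circlebound} (law of sines plus law of cosines). What your approach buys is economy: one self-contained argument instead of two lemmas, with no need to massage the radius into a special shape. What the paper's normalisation buys is a tidier endpoint (unit leading coefficient and a divisor bound in $n$ alone rather than in an auxiliary $QR$ of size $N^{O(1)}$), but for Proposition~\ref{prop:main} this distinction is immaterial.
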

Combining Theorem~\ref{thm:main} with Propositions~\ref{prop:lines} and~\ref{prop:main},
we improve Solymosi's lower bound~\cite{Solymosi2003} to obtain almost tight bounds on the smallest possible diameter of a non-collinear integer
distance set of size $n$.
\begin{corollary}[New diameter lower bound]\label{cor:mainno3} 
  Any non-collinear integer distance set of size $n$ has diameter at least
  \begin{equation*}
  \Omega\left(n^{\Omega(\log\log{n})}\right).
  \end{equation*}
\end{corollary}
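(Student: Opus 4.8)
The plan is to combine Theorem~\ref{thm:main} with Propositions~\ref{prop:lines} and~\ref{prop:main} to show that a non-collinear integer distance set of diameter $D$ has size at most $D^{O(1/\log\log D)}$, and then to invert this relation. Let $S$ be a non-collinear integer distance set of size $n$ and diameter $D$. Since distinct points of an integer distance set are at distance $\ge1$, the set $S$ is $1$-separated and so $n=O(D^2)$; thus $D$ is large whenever $n$ is, and as the claimed bound is trivial for bounded $n$ we may assume throughout that $n$ (hence $D$) exceeds any particular absolute constant needed below. Note that $D$, being a distance between two points of $S$, is a positive integer. Translating $S$ so that one of its points lies at the origin, we may assume $S\subseteq\overline{B(0,D)}\subseteq[-D,D]^2$, the translate still being a non-collinear integer distance set; applying Theorem~\ref{thm:main} with $N=D$ then yields a line or circle $C$ with $|S\setminus C|=O\big((\log D)^{O(1)}\big)$.

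Next I would bound $|S\cap C|$ according to the type of $C$. If $C$ is a line, then, $S$ being non-collinear, Proposition~\ref{prop:lines} applies with $N=D$ and gives $|S\cap C|=O\big(D^{O(1/\log\log D)}\big)$. If $C$ is a circle, then $S\cap C$ is an integer distance set---being a subset of one---lying in $C\cap[-D,D]^2$, so Proposition~\ref{prop:main} gives the same bound $|S\cap C|=O\big(D^{O(1/\log\log D)}\big)$. Since $(\log D)^{O(1)}=O\big(D^{O(1/\log\log D)}\big)$---comparing logarithms, this is just $(\log\log D)^2=O(\log D)$---the two estimates combine to give
\begin{equation*}
  n=|S|=|S\cap C|+|S\setminus C|\le A\,D^{K/\log\log D}
\end{equation*}
for absolute constants $A,K>0$ and all sufficiently large $D$.

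It remains to invert this inequality, which is elementary. Taking logarithms, $\log n\le\log A+K\log D/\log\log D$, so $\log D/\log\log D\ge(\log n)/(2K)$ once $n$ is large enough that $\log A\le\tfrac12\log n$. In particular $\log D\ge(\log n)/(2K)$, so $D\ge n^{1/(2K)}$ and hence $\log\log D\ge\tfrac12\log\log n$ for large $n$; substituting this back in gives $\log D\ge(\log n)(\log\log n)/(4K)$, that is,
\begin{equation*}
  D\ge n^{(\log\log n)/(4K)}=n^{\Omega(\log\log n)},
\end{equation*}
which is the claimed bound (and for the remaining bounded range of $n$ it is trivial after adjusting the implied constant).

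The substance of the argument is entirely contained in Theorem~\ref{thm:main} and Propositions~\ref{prop:lines} and~\ref{prop:main}. The only points needing any care here are checking that the polylogarithmic exceptional set coming from Theorem~\ref{thm:main} is negligible beside the $D^{O(1/\log\log D)}$ bounds on collinear and concyclic subsets, and carrying out the final inversion of $n\le D^{O(1/\log\log D)}$ cleanly; I do not expect either to be a genuine obstacle.
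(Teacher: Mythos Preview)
Your proof is correct and follows exactly the approach the paper indicates: Theorem~\ref{thm:main} bounds the off-line/off-circle part, Propositions~\ref{prop:lines} and~\ref{prop:main} bound the collinear and concyclic parts, and the resulting inequality $n\ll D^{O(1/\log\log D)}$ is then inverted. The paper itself says only that Corollary~\ref{cor:mainno3} ``immediately follows'' from Theorem~\ref{thm:main} by invoking Propositions~\ref{prop:lines} and~\ref{prop:main}, so your write-up simply makes explicit the details (the reduction to $S\subset[-D,D]^2$, the comparison $(\log D)^{O(1)}\ll D^{O(1/\log\log D)}$, and the inversion) that the paper leaves to the reader.
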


It is possible to make all implied constants in the proofs of
Propositions~\ref{prop:lines} and~\ref{prop:main} completely explicit, and thus to compute an explicit $c>0$ such that the minimum diameter of a non-collinear integer
distance set of size $n$ has order of magnitude at least $n^{c\log\log{n}}$. On the other hand, one can also compute, using the
constructions in~\cite{HarborthKemnitzMoller1993}, an explicit constant $c'>c$ such that the minimum diameter of a non-collinear integer distance set
of size $n$ has order of magnitude at most $n^{c'\log\log{n}}$. We can improve these
constructions a bit to obtain a $c'$ slightly smaller than the best known one currently in the literature,
but there is still a gap between this and the largest possible value of $c$ produced by
our arguments. It is, therefore, still an open problem to determine the exact asymptotics of the
minimum diameter of a non-collinear integer distance set of size $n$. Kurz and Wassermann~\cite{KurzWassermann2011} conjectured that,
for $n$ sufficiently large, an integer distance set of size $n$ with minimum diameter has
all but one point collinear. This problem is also still open. Finally, the famous question
of Erd\H{o}s and Ulam of whether there exists a dense rational distance set in the plane
is still unresolved, though the answer is known to be negative conditionally on the $abc$
conjecture~\cite{Pasten2017} or on Lang's conjecture~\cite{Shaffaf2018,Tao2014}.

\subsection{Contrast with prior approaches.} Shortly after his original paper with Anning proving that any non-collinear integer distance set $S$ is finite, Erd\H{o}s gave a very short and elegant second proof of this result~\cite{Erdos1945}, which we now reproduce in its entirety: If $P_1$, $P_2$, $P_3$ are three distinct non-collinear points in $S$, then the fact that every point $x\in S$ has integer distance from each of $P_1$, $P_2$, and $P_3$ implies that
\begin{equation*}
    \big|\| x-P_i\|-\|x-P_{i+1}\| \big|\in\left\{0,1,\dots,\|P_i-P_{i+1}\|\right\},
\end{equation*}
for both $i=1$ and $i=2$. In other words,
\begin{equation*}
S\subset\bigcup_{\substack{H_1\in\mathcal{H}_1 \\ H_2\in\mathcal{H}_2}}(H_1\cap H_2),
\end{equation*}
where, for each $i=1,2$, $\mathcal{H}_i$ is a family of $\|P_i-P_{i+1}\|+1$ hyperbolas with foci $P_i$ and $P_{i+1}$. Since the points $P_1, P_2, P_3$ are not collinear, a hyperbola from $\mathcal{H}_1$ can intersect a hyperbola from $\mathcal{H}_2$ in at most four points. Therefore, $S$ must be finite.

All previous progress towards Corollaries~\ref{cor:mainno4} and~\ref{cor:mainno3} relied heavily on this argument of Erd\H{o}s, which is one of many applications of ``low degree algebraic geometry'' to extremal combinatorics and discrete geometry. It appears that the utility of low degree algebraic geometry in understanding the size and structure of integer distance sets is limited, however, as can be seen from the lack of progress beyond Solymosi's work~\cite{Solymosi2003}. Our new innovation is to use ``intermediate degree algebraic geometry'' to study integer distance sets; we expect this general method to have further applications in combinatorial and discrete geometry.

\subsection{Outline of our method.} 
Our technique may be viewed as a new polynomial method to count points that, after appropriate transformations, are rational points of controlled height satisfying additional conditions of an algebraic nature.

To motivate our method, let $S$ be a set of rational points of height at most $H$. Then, one strategy to control $|S|$ from above would be to show that $S$ lies on an irreducible algebraic curve $\gamma$, defined over $\mathbf{Q}$, of some \textit{intermediate} degree $d$ (or, equivalently, on a small number of such curves that are irreducible). Roughly speaking, if $d$ is too low or too high, then $\gamma$ may contain  many rational points: it is either not curved enough (e.g., it is a line), or it is too curved. There is, however, a small range of intermediate degrees $d$ (depending on $H$) for which there can be very few rational points of height at most $H$ on $\gamma$. The exact bounds follow from the analysis in~\cite{CastryckCluckersDittmannNguyen2020}, as recorded in Theorem~\ref{thm:curvecount} below. In fact, the degrees $d$ that would give optimal bounds on $|S|$ would be of magnitude $(\log H)^C$, implying  that $|S|=O((\log H)^{O(1)})$. 

Inspired by this, our approach can be outlined as follows:
\begin{enumerate}
    \item We show that any integer distance set $S$ in $[-N, N]^2$ can be transformed into a set $\widetilde{S}=LS$ of rational points of height $O(N^2)$ via an appropriate invertible linear transformation $L$ of $\mathbf{C}^2$ (see Lemma~\ref{lem:lattice} below).
    \item According to the above analysis, we would ideally like to show that if $S$ is not almost completely contained in a single line or circle, then $\widetilde{S}$ lies in the union of $O((\log N)^{O(1)})$ irreducible algebraic curves, defined over $\mathbf{Q}$, each of degree roughly $(\log N)^C$ (for some constant $C$). 
    \item To achieve this, we need to search for algebraic conditions that $\widetilde{S}$ satisfies. In particular, we fix $k\asymp \log \log N$ points $P_1,\ldots, P_k\in S$ (if, of course, $k$ points exist in $S$; note that these $k$ points are considerably more than the three points fixed in Erd\H{o}s's argument). To each $(\widetilde{x},\widetilde{y})\in \widetilde{S}$ (which corresponds to some unique $(x,y)\in S$), we associate the $k$-tuple $(d_1,\ldots, d_k)$ of the distances of $(x,y)$ from each of the $P_1,\ldots, P_k$. More precisely, we do this on the whole of $\mathbf{C}^2$ to form a complex algebraic variety: to each $(\widetilde{x},\widetilde{y})=L(x,y)\in\mathbf{C}^2$, we associate (generically $2^k$ such) $k$-tuples $(d_1,\ldots, d_k)$ with $d_i^2=\|(x,y)-P_i\|^2$ for all $i=1,\dots,k$, where $\|(z,w)\|^2=z^2+w^2$ for $z,w\in\mathbf{C}$.
    \item We prove that the set $X_k$ of all such $(\widetilde{x}, \widetilde{y}, d_1,\ldots, d_k)$ (as $(\widetilde{x},\widetilde{y})$ ranges over $\mathbf{C}^2$) is an irreducible algebraic surface, defined over $\mathbf{Q}$, of degree roughly $(\log N)^C$. Since $S$ is an integer distance set, the set $\widetilde{S}'$ of all lifted points $(\widetilde{x},\widetilde{y},d_1,\ldots,d_k)$ of $\widetilde{S}$ on $X_k$ consists of rational points of height $O(N^2)$. Therefore, we are close to the aim in (2): we have transformed our points into rational points of the correct height inside an irreducible algebraic variety of the correct degree. However, this variety is a surface, rather than a curve.
    \item To tackle this issue, we begin by covering $\widetilde{S}'$ by $O((\log N)^{O(1)})$ irreducible algebraic curves $\widetilde{\gamma}$, defined over $\mathbf{Q}$, of degree $O((\log N)^C)$, contained in the surface $X_k$. The existence of such curves is ensured as a consequence of the determinant method from analytic number theory (see Theorem~\ref{thm:surfacecount}). Projecting the first two coordinates to $\mathbf{C}^2$, we are now closer to the aim in (2), but not quite there: we have covered $\widetilde{S}$ by an acceptable number of irreducible algebraic curves $\widetilde{\gamma}$ defined over $\mathbf{Q}$. However, these curves may have very low degree relative to the desired $(\log N)^C$.
    \item We resolve this issue by utilizing the freedom we have to choose the points $P_1,\ldots, P_k\in S$ we fix at the start of the argument. Each choice leads to a different $X_k$ with properties as in (4). We prove that, if $L^{-1}\widetilde{\gamma}$ is not a line or a circle, or if it is but there are enough points in $S$ outside it, then there is a choice of initial points $P_1,\ldots, P_k$ such that the lift of $\widetilde{\gamma}$ on the corresponding $X_k$ is an irreducible curve of the correct degree $(\log N)^C$. Verifying that these lifts are irreducible is the crux of our argument.
\end{enumerate}

We will give a more detailed outline of our argument in Section~\ref{sec:setup}.

\subsection{Comparison with the polynomial method in incidence geometry.} In 2008, Dvir solved the finite field Kakeya problem~\cite{Dvir2009}, introducing, for the first time, the polynomial method into incidence geometry. This revealed a deep algebraic nature underlying incidence geometric problems. A refinement of Dvir's method, called \emph{polynomial partitioning}, was developed soon after by Guth and Katz in their solution of the Erd\H{o}s distinct distances problem in the plane~\cite{GuthKatz2015}. Polynomial partitioning has since induced a revolution in incidence geometry and, subsequently, in harmonic analysis, leading to groundbreaking progress on some of the longest-standing conjectures in the fields. 

Polynomial partitioning relies on finding a low-degree variety containing the points we wish to count. This reveals hidden structure and allows computations. Indeed, in an incidence geometric problem, the points in question generally arise as intersections of certain algabraic objects (e.g., lines), and the study of the interaction of these objects with the variety leads to tighter bounds the lower the degree of the variety is (e.g., via B\'ezout's theorem). 

In stark contrast to this, our method relies on finding high-degree varieties containing our points. This allows us to take advantage of the (at first, hidden) grid-like structure of our point set in the absence of any incidence-geometric structure.

\subsection{Notation and conventions}
We will frequently use Vinogradov's asymptotic notation throughout this paper: we write $X \ll Y$ to mean that $X = O(Y)$, $X\gg Y$ to mean that $X = \Omega(Y)$ (i.e., $Y=O(X)$), and $X\asymp Y$ to mean that $X\ll Y$ and $Y\ll X$. We will write $O(X)$ to represent a quantity that is $\ll X$ and $\Omega(Y)$ to
represent a quantity that is $\gg Y$. We use $|S|$ to denote the cardinality of a set $S$,
$\|\cdot\|$ to denote the usual Euclidean norm on $\mathbf{R}^2$, $\mathbf{P}^n$ to denote
$n$-dimensional complex projective space, $\mathbf{P}_{\mathbf{Q}}^n$ to denote the
rational points of $\mathbf{P}^n$, $\tau$ to denote the divisor function, and
$\left( \frac{\cdot}{\cdot}\right)$ to denote the Legendre symbol. Finally, we will always
use $p$ to denote a prime number, and, for any natural number $n$, will write
$p^{a}\mid\mspace{-2mu}\mid n$ with $a\in\mathbf{N}$ to mean that $p^{a}$ divides $n$ but $p^{a+1}$
does not divide $n$ (so that $p^a$ is the highest power of $p$ dividing $n$).

\subsection*{Acknowledgments}
This work was initiated at the Hausdorff Research Institute for Mathematics (HIM) during the trimester program Harmonic Analysis and Analytic Number Theory; we are grateful to HIM for facilitating this collaboration. We also gratefully acknowledge the hospitality and support of the Institute for Advanced Study during the Special Year on Dynamics, Additive Number Theory and Algebraic Geometry, where a significant portion of this research was conducted. 

We thank Tony Carbery, Polona Durcik, Ben Green, Leo Hochfilzer, Karen Smith, and Terence Tao for
helpful discussions, J\'ozsef Solymosi for help with
references and informing us about the history of the study of integer distance sets, as well as Ben Green,  Alex Iosevich, and Noah Kravitz for useful comments on an earlier version of this paper. We would particularly like to thank Terence Tao for suggesting a strengthening of a prior version of our structure theorem.

RG was supported by the Association of Members of the Institute for Advanced Study (AMIAS) and by NSF grant DMS-2242871. SP was supported by the NSF Mathematical Sciences Postdoctoral Research Fellowship Program under Grant No. DMS-1903038 while most of this research was carried out.

\section{Setup and overview of the argument}\label{sec:setup}

We begin by proving the following lemma, which allows us to assume that the elements of an integer distance set are contained in a lattice and whose proof is a quantification of a well-known argument of Kemnitz~\cite{Kemnitz1988}.

\begin{lemma}\label{lem:lattice}
  Let $S\subset\mathbf{R}^2$ be an integer distance set containing the
  origin and at least one other point. Set $M\in\mathbf{N}$ to be the smallest distance
  from the origin to another (distinct) point in $S$. Then there exist a rotation
  $T\in\SO_2(\mathbf{R})$ and a
  squarefree $m\in \mathbf{N}$ such that
  \begin{equation*}
    TS\subset\left\{(x,y\sqrt{m}):x,y\in\frac{1}{2M}\mathbf{Z}\right\}.
  \end{equation*}
\end{lemma}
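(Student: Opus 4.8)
The plan is to follow Kemnitz's argument and make each step quantitative. First I would fix three points of $S$: the origin $O$, a point $P$ at distance $M$ from the origin (where $M$ is minimal), and — if $S$ is non-collinear — some third point $Q$ not on the line $OP$; if $S$ is collinear the statement is trivial after a rotation sending the line to a coordinate axis, so assume we are in the non-collinear case. Applying a rotation $T\in\SO_2(\mathbf R)$, I may assume $O=(0,0)$ and $P=(M,0)$. The key first observation is that for any $X=(x,y)\in S$, both $\|X\|=\sqrt{x^2+y^2}$ and $\|X-P\|=\sqrt{(x-M)^2+y^2}$ are integers, and subtracting the squares gives $\|X-P\|^2-\|X\|^2 = M^2-2Mx$, so $x = \frac{\|X\|^2-\|X-P\|^2+M^2}{2M}\in\frac{1}{2M}\mathbf Z$. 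This already puts the first coordinate of every point of $TS$ in $\frac{1}{2M}\mathbf Z$.

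Next I would control the second coordinate. Since $x\in\frac{1}{2M}\mathbf Z$ and $x^2+y^2=\|X\|^2\in\mathbf Z$, we get $y^2 = \|X\|^2 - x^2 \in \frac{1}{4M^2}\mathbf Z$, i.e. $(2My)^2\in\mathbf Z$, so $2My$ is the square root of a nonnegative integer; write $2My=\sqrt{k_X}$ with $k_X\in\mathbf N$. The remaining task is to show all the $k_X$ share the same squarefree part. For this I bring in the third point $Q$: using the integrality of $\|X-Q\|$ together with $\|X\|$ and the already-established fact that the first coordinates of $X$ and $Q$ lie in $\frac{1}{2M}\mathbf Z$, expanding $\|X-Q\|^2 = \|X\|^2 - 2\langle X,Q\rangle + \|Q\|^2$ shows that $\langle X,Q\rangle = x\,q_1 + y\,q_2 \in \frac{1}{4M^2}\mathbf Z$. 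Since $x q_1\in\frac{1}{4M^2}\mathbf Z$ already, we deduce $y\,q_2\in\frac{1}{4M^2}\mathbf Z$, i.e. $(2My)(2Mq_2)\in\mathbf Z$, i.e. $\sqrt{k_X k_Q}\in\mathbf Z$. Hence $k_X k_Q$ is a perfect square for every $X\in S$, which forces $k_X$ and $k_Q$ to have the same squarefree part $m$; that is, $k_X = m\,t_X^2$ for some $t_X\in\mathbf N$, so $2My = t_X\sqrt m$ and $y\in\frac{\sqrt m}{2M}\mathbf Z$. Taking this $m$ (note $m\geq 1$ is squarefree) gives $TS\subset\{(x,y\sqrt m): x,y\in\frac1{2M}\mathbf Z\}$, after absorbing $t_X$ into the $\frac1{2M}\mathbf Z$ coordinate.

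I expect the main subtlety to be the bookkeeping in the case analysis — specifically, handling the degenerate situation where $S$ is collinear (where no third point $Q$ exists and one must argue directly that a rotation lands $S$ in a coordinate axis, corresponding to $m=1$), and making sure the chosen squarefree $m$ is independent of which auxiliary point $Q$ is used (different valid choices of $Q$ must yield the same $m$, which follows because $k_Q$ itself has squarefree part $m$ by the same pairwise argument applied with the roles swapped). Everything else is elementary: the only arithmetic input is that a product of two positive integers is a perfect square if and only if they have equal squarefree parts, and the only geometric input is the rotation normalization plus repeated use of the polarization identity $2\langle A,B\rangle = \|A\|^2+\|B\|^2-\|A-B\|^2$ applied to pairs of points in the integer distance set.
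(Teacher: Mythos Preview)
Your proof is correct and follows essentially the same approach as the paper's. Both arguments rotate so that $(M,0)\in TS$, use the polarization identity $2\langle A,B\rangle=\|A\|^2+\|B\|^2-\|A-B\|^2$ to place first coordinates in $\frac{1}{2M}\mathbf{Z}$, deduce that $(2My)^2\in\mathbf{Z}$, and then use the rationality of the dot product between two off-axis points to force equality of squarefree parts; the only cosmetic differences are that the paper phrases the first-coordinate step directly as $X\cdot(M,0)\in\frac12\mathbf{Z}$ rather than via $\|X-P\|^2-\|X\|^2$, and handles the collinear case implicitly rather than as a separate degenerate case.
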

\begin{proof}
  Observe first that if $P_1$ and $P_2$ are any points in an integer distance set, then, since
  \begin{equation}\label{eq:dotprod}
    \|P_1\|^2+\|P_2\|^2-\|P_1-P_2\|^2=2P_1\cdot P_2
  \end{equation}
  by the law of cosines, the dot product $P_1\cdot P_2$ is in
  $\frac{1}{2}\mathbf{Z}$. There exists a rotation $T\in \SO_2(\mathbf{R})$ such that $(M,0)\in TS$, so if
  $(x,y)\in TS$, then $x$ is forced to lie in $\frac{1}{2M}\mathbf{Z}$. It therefore
  follows that all points in $TS$ that lie on the $x$-axis are contained in
  \begin{equation*}\label{eq:xaxis}
    \left\{(x,0):x\in\frac{1}{2M}\mathbf{Z}\right\}.
  \end{equation*}
  
  So, suppose that $(a/2M,b)$, with $a\in\mathbf{Z}$, is a point in $TS$ not on the
  $x$-axis. Then, since $(a/2M,b)\cdot (a/2M,b)$ is the square of the distance from
  $(a/2M,b)$ to the origin, we certainly have
  \begin{equation*}
    \frac{a^2}{4M^2}+b^2\in\mathbf{Z},
  \end{equation*}
  so that there exists an integer $k\in\mathbf{Z}$ such that
  \begin{equation*}
    b^2=\frac{4kM^2-a^2}{4M^2}.
  \end{equation*}
  It follows that there exist a squarefree $m\in\mathbf{N}$ and an integer $n\in\mathbf{Z}$ such
  that $b=\frac{n}{2M}\sqrt{m}$. Observe that this $m$ is the same for all points in $TS$ not
  on the $x$-axis. Indeed, if $$\left(\frac{a'}{2M},\frac{n'}{2M}\sqrt{m'}\right)\in TS$$ for $a',n',m'\in\mathbf{Z}$
  with $m'\geq 1$ squarefree, then, by \eqref{eq:dotprod}, the dot product 
  \begin{equation*}
    \left(\frac{a}{2M},\frac{n}{2M}\sqrt{m}\right)\cdot \left(\frac{a'}{2M},\frac{n'}{2M}\sqrt{m'}\right)=\frac{aa'}{4M^2}+\frac{nn'}{4M^2}\sqrt{mm'}
  \end{equation*}
  is a rational number, which forces $\sqrt{mm'}\in\mathbf{Q}$, and since $m$ and $m'$ are
  both squarefree, they have to be equal. We therefore conclude that $TS$ is contained in a set of the
  form
  \begin{equation*}
    \left\{(x,y\sqrt{m}):x,y\in\frac{1}{2M}\mathbf{Z}\right\},
  \end{equation*}
  as stated.
\end{proof}

\subsection{Encoding the points of $S$ as rational points on a surface} 
We will use basic definitions and results from algebraic geometry throughout this paper; a good reference for this is~\cite[Chapter I]{Hartshorne77}. Note that we may assume that $N$ is larger than any specified absolute constant, since the results are trivial for bounded $N$. In addition, observe that it
suffices to prove Theorem~\ref{thm:main} in the case that $S$ contains
the origin and at least one other point. Indeed, either $S$ contains at least two points
$P$ and $P'$ in $[-N,N]^2$, or the result is trivial; moreover, if there exists a line or circle $C\subset\mathbf{R}^2$ such that all but $O((\log{2N})^{O(1)})$ points of $(S-P)\cap[-2N,2N]^2$ are contained in $C$, then certainly all but $O((\log{N})^{O(1)})$ points of $S$ are contained in a single line or circle. Thus, it follows from Lemma~\ref{lem:lattice} that we may further assume,
without loss of generality, that there exists a squarefree $m\in\mathbf{N}$ such that
\[
  S\subset \left\{\left(x,y\sqrt{m}\right):x,y\in\frac{1}{2M}\mathbf{Z}\right\},
\]
where $M$ is the shortest distance from the origin to another point in $S$, which is necessarily at most $\sqrt{2}N$.

Let $k=k(N)$ be an integer parameter increasing slowly with $N$ (which we will,
eventually, take to be $\asymp \log\log{N}$). If $|S|\leq k+1$, then we are
done, since $N$ is large, by our initial assumption. Otherwise, pick $k$ distinct points $\tilde{P}_j=(a_j,b_j\sqrt{m})\in S$ (so that
$a_i,b_i\in\frac{1}{2M}\mathbf{Z}\subseteq \mathbf{Q}$) for $1\leq j\leq k$, none of which
is the origin, and set $P_j=(a_j,b_j)$ for each $1\leq j\leq k$. For any point $P=(a,b)\in\mathbf{Q}^2$, we define the polynomial $Q_{m,P}\in\mathbf{Q}[x,y,d]$ by
\begin{equation*}
 Q_{m,P}(x,y,d):=(x-a)^2+m(y-b)^2-d^2.
\end{equation*} 
Consider the affine variety
\begin{equation*}
  X_k:=\left\{(x,y,d_1,\dots,d_k):Q_{m,P_j}(x,y,d_j)=0\text{ for }j=1,\dots,k\right\}\subset\mathbf{C}^{k+2},
\end{equation*}
which is defined over $\mathbf{Q}$. Note that $\tilde{P}=(x,y\sqrt{m})\in S$ implies that
\begin{equation*}
  \left(x,y,\|\tilde{P}-\tilde{P}_1\|,\dots,\|\tilde{P}-\tilde{P}_k\|\right)\in X_k\cap \left(\frac{1}{2M}\mathbf{Z}\times\frac{1}{2M}\mathbf{Z}\times\mathbf{Z}^k\right).
\end{equation*}
Each such point corresponds to a point
\begin{equation*}
  \left[2Mx:2My:2M:2M\|\tilde{P}-\tilde{P}_1\|:\cdots:2M\|\tilde{P}-\tilde{P}_k\|\right]
\end{equation*}
with all components integers in the projective closure
$\overline{X_k}\subset\mathbf{P}^{k+2}$ of $X_k$. Thus, the points in $S$ correspond to
a certain subset of rational points in $\overline{X_k}$ of height at most $8N^2$.  In
Section~\ref{sec:ag}, we will prove that $\overline{X_k}$ is an irreducible surface of degree $2^k$.

\subsection{Covering rational points of small height by few low degree curves}\label{subsec:2.4}
Let $V\subset\mathbf{P}^n$ be a projective variety defined over $\mathbf{Q}$. For $N\geq
1$, we set
\begin{equation*}
    V(\mathbf{Q},N) := \left\{z\in V\cap\mathbf{P}^n_{\mathbf{Q}}: H(z)\leq N\right\},
\end{equation*}
where $H(z) := \max\left\{|z_0|,\dots,|z_n|\right\}$ for $z=[z_0:\dots:z_n]$ with
$z_0,\dots,z_n\in\mathbf{Z}$ coprime denotes the \emph{height} of a rational point $z\in V$.

It is known, originally thanks to work of Heath-Brown~\cite{Heath-Brown2002}, that almost
all rational points of small height on an irreducible projective surface defined over
$\mathbf{Q}$ lie on a small number of low degree
curves. Salberger~\cite{Salberger2023} refined this result in the course of his proof of
the Dimension Growth Conjecture, and then Salberger's result was further refined by Walsh~\cite{Walsh2015} to remove all factors of $\log{N}$ and $N^{c/\log(1+\log{N})}$ appearing in the bounds. We will apply a subsequent refinement of Walsh's result
(Theorem~\ref{thm:surfacecount} below) that follows from work of Castryk, Cluckers,
Dittmann, and Nguyen~\cite{CastryckCluckersDittmannNguyen2020}, which makes explicit the dependence of various implied constants on the degree of the surface and the dimension of the ambient projective space. This will allow us to deduce, after an
appropriate projection, that all points in $S$ lie on the union of $\ll
e^{O(k)}N^{3/2^{k/2}}$ irreducible affine curves of degree $\ll e^{O(k)}N^{3/2^{k/2}}$. 

\begin{theorem}\label{thm:surfacecount}
  For any irreducible surface $V\subset\mathbf{P}^n$ of degree $d$ defined over
  $\mathbf{Q}$, there exists a homogeneous polynomial $g\in\mathbf{Z}[x_0,\dots,x_n]$ of
  degree at most
    \begin{equation}\label{eq:surfacecount}
        \ll n^{175n^2/\sqrt{d}}d^{7/2+3n^2/\sqrt{d}} N^{3/2\sqrt{d}},
    \end{equation}
    that vanishes at all rational points of $V$ of height at most $N$ but does not vanish
    on the whole of $V$.
\end{theorem}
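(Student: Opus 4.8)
The plan is to deduce this from the explicit form of the \emph{determinant method} (the polynomial method of Bombieri--Pila and Heath-Brown, in the shape given by Salberger's global determinant method and made uniform and effective in the degree by Castryck, Cluckers, Dittmann, and Nguyen~\cite{CastryckCluckersDittmannNguyen2020}). The key point is that a single auxiliary polynomial of the type demanded here is exactly the primitive object that the method produces, before one ever breaks it into irreducible components and counts, so there is no need to pass through a covering of $V(\mathbf{Q},N)$ by curves. In practice I would quote the determinant-method estimate of~\cite{CastryckCluckersDittmannNguyen2020} for integral projective surfaces over $\mathbf{Q}$ and merely check that (i) an irreducible surface $V\subset\mathbf{P}^n$ over $\mathbf{Q}$ meets its hypotheses and (ii) its conclusion can be restated as an integer homogeneous $g$ vanishing at every point of $V(\mathbf{Q},N)$ but not identically on $V$, of degree bounded as in~\eqref{eq:surfacecount}. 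Below I sketch the argument underlying that estimate.

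Fix a degree parameter $e$ and let $W_e$ be the image of the degree-$e$ forms in the homogeneous coordinate ring of $V$; once $e$ exceeds the Castelnuovo--Mumford regularity of $V$ (bounded explicitly in $n$ and $d$), the Hilbert polynomial gives $D:=\dim_{\mathbf{Q}}W_e=\tfrac d2 e^2+O_{n,d}(e)$. Suppose, towards a contradiction, that no nonzero degree-$e$ form vanishes on $V(\mathbf{Q},N)$ without vanishing on all of $V$; then the evaluation functionals at the points of $V(\mathbf{Q},N)$ span the dual of $W_e$, so one can choose $x_1,\dots,x_D\in V(\mathbf{Q},N)$ and a basis $m_1,\dots,m_D$ of $W_e$ making the matrix $M=(m_i(x_j))_{1\le i,j\le D}$ invertible. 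Scaling so that each $x_j$ has coprime integer coordinates of absolute value $\le N$, the entries of $M$ are integers of size $\le N^e$, so Hadamard's inequality gives $|\det M|\le D^{D/2}N^{eD}$ while $\det M\neq 0$. For the matching lower bound one reduces modulo primes: for every prime $p$ of good reduction for $V$ — all but $O_{n,d}(1)$ of them, the exceptional set controlled explicitly via resultants or an effective Bertini argument — the reduction is an integral surface of degree $d$ with at most $dp^2+O_{n,d}(p)$ points over $\mathbf{F}_p$; partitioning the $x_j$ by their reductions mod $p$, the points in a common class specialize to a single smooth $\mathbf{F}_p$-point, and expanding the $m_i$ in local coordinates there and performing column operations within the class extracts a power of $p$ governed by the order sequence of $W_e$ at a smooth point. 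Summing over residue classes and using convexity of this ``cost'' as a function of class size together with the $\mathbf{F}_p$-point count gives a lower bound for $v_p(\det M)$; the global version of the method runs this simultaneously over a range of primes, using the irreducibility of $V$ over $\mathbf{Q}$ to rule out all the points hiding in one class at once, and comparing the resulting lower bound for $\prod_p p^{v_p(\det M)}$ with the Hadamard bound forces $\det M=0$ as soon as $e\gg_{n,d}N^{3/(2\sqrt d)}$, the contradiction establishing the existence of $g$. For surfaces in an arbitrary $\mathbf{P}^n$ I would keep the exponent of $N$ equal to $\tfrac{3}{2\sqrt d}$ either by running the argument directly in $\mathbf{P}^n$ (the per-prime $p$-adic estimate only sees the two local coordinates of the surface, so $n$ enters solely through constants) or by first applying a generic linear projection $\mathbf{P}^n\dashrightarrow\mathbf{P}^3$, which is birational from $V$ onto its image off a curve of degree $\ll_{n,d}1$, handling that exceptional curve by one extra bounded-degree factor vanishing on it but not on $V$ and pulling back.

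The main obstacle is not the strategy but making \emph{every} constant explicit and uniform in both $n$ and $d$: the Castelnuovo--Mumford regularity bound that pins down $\dim W_e$, the bound on the number and size of the primes of bad reduction, the precise $p$-adic valuation bookkeeping via the order sequence at a smooth point, and the joint optimization of $e$ and of the range of primes that produces exactly the exponent $\tfrac{3}{2\sqrt d}$ and the factor $n^{175n^2/\sqrt d}d^{7/2+3n^2/\sqrt d}$ in~\eqref{eq:surfacecount}. This is precisely the technical content of~\cite{CastryckCluckersDittmannNguyen2020}, so I would rely on it for these steps rather than reprove them, and would spend my own effort only on the bookkeeping that packages its output as the single integer polynomial $g$ asserted above.
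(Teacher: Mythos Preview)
Your proposal is correct in substance, but the paper's proof is far shorter and more direct. Rather than sketching the full determinant method and then citing~\cite{CastryckCluckersDittmannNguyen2020} for the constants, the paper simply \emph{applies} two ready-made results from that reference as black boxes: Lemma~5.1 there furnishes a rational linear map $L:\mathbf{P}^n\to\mathbf{P}^3$ with $L(V)$ an irreducible degree-$d$ surface and $H(L(z))\ll n^{115n^2}d^{2n^2}H(z)$ (after tracing the constants $c_n\ll n^{100n^2}$ and $B_2\ll n^{10n}d^{2n}$ through its proof), and Theorem~3.1.1 there already supplies the auxiliary polynomial $h$ for the surface $L(V)\subset\mathbf{P}^3$; the desired $g$ is then just $h\circ L$. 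This is one of your two suggested routes, but packaged as a two-line citation rather than a re-derivation.

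One point worth correcting in your write-up: your ``exceptional curve'' factor is unnecessary. Once the centre of projection is chosen off $V$ (which Lemma~5.1 of~\cite{CastryckCluckersDittmannNguyen2020} arranges), $L$ is a morphism on $V$ that surjects onto $L(V)$, so $h$ not vanishing on $L(V)$ automatically forces $h\circ L$ not to vanish on $V$, and every rational point of $V$ of height $\le N$ maps to a rational point of $L(V)$ of height $\le c_n d^{2(n-3)^2}N$, where $h$ already vanishes. No separate treatment of a non-birational locus is needed.
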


\begin{proof}
    By tracing through the proof of Lemma~5.1 of~\cite{CastryckCluckersDittmannNguyen2020}, the implied constant $c_n$ in the lemma statement satisfies $c_n\ll n^{100n^2}$ and the constant $B_2$ in the proof satisfies $B_2\ll n^{10n}d^{2n}$. Thus, by this lemma and the definition of $\Gamma$ in its proof, there exists a linear map $L:\mathbf{P}^n\to\mathbf{P}^3$ defined over the rationals such that $L(V)$ is an irreducible surface of degree $d$ and such that, if $z\in V$ is a rational point, then
    \begin{equation*}
        H(L(z))\leq c_nd^{2(n-3)^2}H(z)\ll n^{115n^2}d^{2n^2} H(z).
    \end{equation*}
     Applying Theorem~3.1.1 of~\cite{CastryckCluckersDittmannNguyen2020} to $L(V)$ produces a homogeneous polynomial $h\in\mathbf{Z}[x_0,x_1,x_2,x_3]$ of degree
    \begin{equation*}
        \ll (c_nd^{2(n-3)^2}N)^{3/2\sqrt{d}}d^{7/2}\ll n^{175n^2/\sqrt{d}}d^{7/2+3n^2/\sqrt{d}} N^{3/2\sqrt{d}}
    \end{equation*}
    (using that the quantity $b(f)/\|f\|^{1/2d^{3/2}}$ appearing in the theorem statement in~\cite{CastryckCluckersDittmannNguyen2020} is bounded) that vanishes on all rational points of $L(V)$ of height at most $c_nd^{2(n-3)^2}N$ but does not vanish on all of $L(V)$. This means that the homogeneous polynomial $g:=h\circ L\in\mathbf{Q}[x_0,\dots,x_n]$ has degree at most $\deg{h}$, vanishes on every rational point of $V$ of height at most $N$, and does not vanish on all of $V$.
\end{proof}

\subsection{Counting rational points of small height on curves} As mentioned above, by applying Theorem~\ref{thm:surfacecount} to $\overline{X_k}$ and then an
  appropriate projection (see Lemma~\ref{lem:projection}), we deduce that $S$ is covered by $\ll
e^{O(k)}N^{3/2^{k/2}}$ irreducible affine curves of degree $\ll
e^{O(k)}N^{3/2^{k/2}}$. Taking $k\asymp\log\log N$ reveals that $S$ is covered by
$\ll(\log N)^{O(1)}$ irreducible affine curves of degree $\ll(\log N)^{O(1)}$. Therefore,
to prove Theorem~\ref{thm:main}, it suffices to bound, for each irreducible curve $C$ that either (1) is not a line or a circle or (2) is a line or circle for which $|S\setminus C|\gg(\log\log{N})^2$, the maximum possible size of an integer distance set contained
in $C\cap [-N,N]^2$ by $\ll(\log N)^{O(1)}$. Indeed, this immediately gives that all but at most $\ll(\log{N})^{O(1)}$ points of $S$ are contained in a single line or circle.

We show in Section~\ref{sec:end} that either $C$ contains few points in $S$, or $C':=\{(x,y):(x,y\sqrt{m})\in C\}$ is defined over $\mathbf{Q}$. In the latter case, we proceed as before, selecting $k$ distinct, well-chosen points $\tilde{P}_1',\dots,\tilde{P}_k'\in S$ depending on $C$, none of which is the origin, with $\tilde{P}_j'=(a_j',b_j'\sqrt{m})$ and $P_j'=(a_j',b_j')\in\mathbf{Q}^2$ for each $1\leq j\leq k$, and consider the affine variety
\begin{equation*}\label{eq:curvek}
  \mathcal{C}_k:=\left\{(x,y,d_1,\dots,d_k):Q_{m,P_j'}(x,y,d_j)=0\text{ for all }j=1,\dots,k\right\}\cap(C'\times\mathbf{C}^k)\subset\mathbf{C}^{k+2}.
\end{equation*}
Then, the points in $S\cap C$ correspond to a certain subset of rational points in the projective closure $\overline{\mathcal{C}_k}\subset\mathbf{P}^{k+2}$ of height at most $8N^2$. In Section~\ref{sec:ag2}, we will prove that $\overline{\mathcal{C}_k}$ is an irreducible curve of degree between $2^k$ and $2^{k}\deg{C}$, provided that $\tilde{P}_1',\dots,\tilde{P}_k'\in S$ are chosen appropriately. As was mentioned in the introduction, proving the irreducibility of $\overline{\mathcal{C}_k}$ is the most difficult part of the argument. In contrast to the proof of the irreducibility of $\overline{X_k}$, which is quite straightforward, the proof of the irreducibility of $\overline{\mathcal{C}_k}$ requires the normalization theorem for irreducible algebraic curves and the classification of compact Riemann surfaces combined with a detailed analysis of the geometry of the intersection of $C'$ with a certain family of lines depending on the choice of the points $\tilde{P}_1',\dots,\tilde{P}_k'$. We emphasize that these points will be chosen on $C$ when $C$ is not a line or a circle, and outside of $C$ otherwise.

We can then apply a refinement of the Bombieri--Pila~\cite{BombieriPila1989} bound due to
Castryk, Cluckers, Dittmann, and Nguyen~\cite{CastryckCluckersDittmannNguyen2020}, who
proved an upper bound on $|V(\mathbf{Q},N)|$, when $V\subset\mathbf{P}^n$ is an
irreducible algebraic curve defined over $\mathbf{Q}$, with optimal dependence on $N$ and
with an explicit polynomial dependence on the degree of $V$. By tracing through the proof
of \cite[Theorem~2]{CastryckCluckersDittmannNguyen2020} (and the results used to deduce it
from the $n=2$ case \cite[Proposition~4.3.2 and
Lemma~5.1]{CastryckCluckersDittmannNguyen2020}), one can see that the dependence of their
implied constant on $n$ is $\ll n^{200n^2/d}d^{4n^2/d}$. We record this in the following
theorem.
\begin{theorem}[Castryk, Cluckers, Dittmann, Nguyen~\cite{CastryckCluckersDittmannNguyen2020}]\label{thm:curvecount}
    For any irreducible curve $V\subset\mathbf{P}^n$ of degree $d$ defined over $\mathbf{Q}$, we have
    \begin{equation}\label{eq:curvecount}
        |V(\mathbf{Q},N)|\ll n^{200n^2/d}d^{4(1+n^2/d)} N^{2/d}.
    \end{equation}
\end{theorem}

Thus, as long as $C$ either is not a line or circle or is a line or circle for which $|S\setminus C|\gg(\log\log{N})^2$, we can use Theorem~\ref{eq:curvecount} to bound the number
of points in $S\cap C$ by $\ll e^{O(k)}N^{2/2^{k}}$, which, for $k\asymp
\log\log N$, is $\ll(\log N)^{O(1)}$. Putting everything together yields
Theorem~\ref{thm:main}. We will carry out the argument just outlined in Sections~\ref{sec:ag},~\ref{sec:ag2}, and~\ref{sec:end}, and prove
Propositions~\ref{prop:lines} and~\ref{prop:main} in Section~\ref{sec:circles}.

\begin{remark}
    Some polynomial dependence on $d$ is necessary in the bounds in Theorems~~\ref{thm:surfacecount} and \ref{thm:curvecount} (see~\cite[Proposition 5]{CastryckCluckersDittmannNguyen2020}). 
    As a consequence, no bound better than $|S\setminus C|\ll(\log{N})^{O(1)}$ in Theorem~\ref{thm:main} seems attainable via our methods. The determinant method produces bounds that apply to any irreducible surface (respectively, curve) in $\mathbf{P}^n$ of degree $d$, but significantly better bounds could hold for individual surfaces (respectively, curves) in this collection. Thus, a further analysis of the specific varieties arising in our argument would be needed to improve Theorem~\ref{thm:main}.
\end{remark}

\section{Degree and irreducibility of $\overline{X_k}$}\label{sec:ag}

The goal of this section is to prove the following result:

\begin{lemma}\label{lem:generalX}
    Let $k\in\mathbf{N}$, $m\in\mathbf{N}$ be squarefree, and $P_1=(a_1,b_1),\dots,P_k=(a_k,b_k)\in\mathbf{Q}^2$ be any $k$ distinct points, none of which is the origin. Define the affine variety $X_k\subset\mathbf{C}^{k+2}$  by
    \begin{equation}\label{eq:Xk}
        X_k:=\left\{(x,y,d_1,\dots,d_k)\in\mathbf{C}^{k+2}:Q_{m,P_j}(x,y,d_j)=0\text{ for all }j=1,\dots,k\right\}.
    \end{equation}
    Then $\overline{X_k}$ is an irreducible surface in $\mathbf{P}^{k+2}$ of degree $2^k$ defined over $\mathbf{Q}$.
\end{lemma}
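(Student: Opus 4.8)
The plan is to establish the three assertions---dimension two, degree $2^k$, and irreducibility---more or less in that order, working with the defining equations $Q_{m,P_j}(x,y,d_j)=(x-a_j)^2+m(y-b_j)^2-d_j^2$. The key structural observation is that the variable $d_j$ appears only in the $j$-th equation, so $X_k$ is an iterated fibration: projecting to the $(x,y)$-plane, the fiber over a point $(x_0,y_0)$ consists of all choices of square roots $d_j=\pm\sqrt{(x_0-a_j)^2+m(y_0-b_j)^2}$, generically $2^k$ points. This already suggests $\dim X_k=2$ and $\deg \overline{X_k}=2^k$, and I would make it precise as follows.

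First, \textbf{dimension and degree.} I would compute the degree of the projective closure by intersecting with a generic linear subspace of complementary dimension, or more cleanly by a resultant/elimination argument: eliminating $d_1,\dots,d_k$ is trivial since they are decoupled, and the projection $\pi:\overline{X_k}\to\mathbf{P}^2$ (onto the $[x:y:w]$ coordinates, where $w$ is the homogenizing variable) is finite of degree $2^k$ once one checks that over a generic line in $\mathbf{P}^2$ the preimage is a curve meeting a generic hyperplane in $2^k$ points. Concretely, each equation $Q_{m,P_j}=0$ homogenizes to a quadric $\widetilde{Q}_j$ in $\mathbf{P}^{k+2}$ involving only $x,y,w,d_j$; since these $k$ quadrics involve disjoint sets of the ``new'' variables $d_j$, the intersection $\widetilde{Q}_1\cap\cdots\cap\widetilde{Q}_k$ is a complete intersection of dimension $k+2-k=2$, and by Bézout its degree is $2^k$ provided the intersection is proper and there is no excess component at infinity. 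I would verify properness by checking that the $k$ quadrics form a regular sequence in $\mathbf{C}[x,y,w,d_1,\dots,d_k]$---which follows because the $j$-th quadric is the only one containing $d_j$, so it is a nonzerodivisor modulo the ideal generated by the previous ones---and that the part at infinity $\{w=0\}\cap\overline{X_k}$ has dimension $1$, hence does not inflate the degree. (At $w=0$ the equations become $x^2+my^2=d_j^2$ for all $j$, whose projective solution set is easily seen to be a curve, or a finite union of lines, of the right dimension.)

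Second, \textbf{irreducibility,} which I expect to be the main obstacle. A complete intersection need not be irreducible, so Bézout alone does not suffice; the worry is that some sign pattern of the $d_j$ splits off a component, or that the square-root ambiguities are not ``independent.'' The natural strategy is to show $\overline{X_k}$ is irreducible by an inductive unramified-covering argument: $X_k\to X_{k-1}$ is the double cover obtained by adjoining $\sqrt{f_k}$ where $f_k(x,y)=(x-a_k)^2+m(y-b_k)^2$, and such a double cover of an irreducible variety is irreducible precisely when $f_k$ is not a square in the function field $\mathbf{C}(X_{k-1})$. So the crux is to prove, for each $j$, that $f_j$ is not a square modulo the previous equations---equivalently that the distance-squared function to $P_j$ is not already a perfect square of a rational function on $X_{j-1}$. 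Here I would use that $P_1,\dots,P_k$ are \emph{distinct}: if $f_j=(x-a_j)^2+m(y-b_j)^2$ were a square in $\mathbf{C}(X_{j-1})$, one could pull back to the $(x,y)$-plane (since $X_{j-1}$ dominates $\mathbf{A}^2$) and deduce $f_j$ is a square in $\mathbf{C}(x,y)$ times a square from among $f_1,\dots,f_{j-1}$, i.e. a product of distinct irreducible conics equals a square in $\mathbf{C}[x,y]$, which is impossible by unique factorization since each $f_i$ is an irreducible quadratic (a circle in the affine $(x,y\sqrt m)$ picture centered at a distinct point $P_i$, so the $f_i$ are pairwise non-proportional and none is a perfect square). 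This genericity-free argument---leveraging distinctness of the $P_j$ rather than any general position---is what makes irreducibility go through.

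Finally, I would note that everything is defined over $\mathbf{Q}$ because the $P_j$ and the equations have rational coefficients, and the projective closure of a $\mathbf{Q}$-variety is again defined over $\mathbf{Q}$; irreducibility over $\mathbf{C}$ (geometric irreducibility) is what we have proven, which is the stronger statement. Assembling: $\dim\overline{X_k}=2$, $\deg\overline{X_k}=2^k$ by the complete-intersection count, and $\overline{X_k}$ is irreducible by the tower of genuinely non-split double covers, completing the proof of Lemma~\ref{lem:generalX}.
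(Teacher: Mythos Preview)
Your approach is genuinely different from the paper's and, modulo one correction, works. The paper establishes the projective closure, dimension, and degree via a Gr\"obner basis computation (showing $\{Q_{m,P_j}\}_j$ is a Gr\"obner basis under grlex, from which the explicit description of $\overline{X_k}$, $\dim\overline{X_k}=2$, and then $\deg\overline{X_k}=2^k$ via an explicit linear slice follow), and proves irreducibility \emph{topologically}: it computes the singular locus, passes to the smooth part, and shows via an explicit monodromy/winding-number argument on a well-chosen complex line in the $(x,y)$-plane that the $2^k$-sheeted cover over $\mathbf{C}^2$ minus the branch lines is path-connected. Your complete-intersection count for the degree and your Kummer/function-field argument for irreducibility are more algebraic and avoid both the Gr\"obner machinery and the hands-on topology; conversely, the paper's route yields intermediate lemmas (the Gr\"obner basis, the explicit singular locus) that it reuses when treating $\overline{\mathcal{C}_k}$ later.

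There is, however, a real error in your irreducibility step: each $f_j=(x-a_j)^2+m(y-b_j)^2$ is \emph{not} irreducible over $\mathbf{C}$. It factors as
\[
f_j=\bigl(x+i\sqrt{m}\,y-(a_j+i\sqrt{m}\,b_j)\bigr)\bigl(x-i\sqrt{m}\,y-(a_j-i\sqrt{m}\,b_j)\bigr)=:\ell_j\,\ell_j',
\]
so ``a product of distinct irreducible conics cannot be a square'' is the wrong justification. The fix is immediate: because $P_1,\dots,P_k\in\mathbf{Q}^2$ are distinct, the $2k$ linear forms $\ell_1,\ell_1',\dots,\ell_k,\ell_k'$ are pairwise non-proportional in $\mathbf{C}[x,y]$ (the $\ell_j$ all have one slope, the $\ell_j'$ the conjugate slope, and within each family the intercepts $a_j\pm i\sqrt{m}\,b_j$ are distinct since the $P_j$ are), so any nonempty product $\prod_{i\in S}f_i=\prod_{i\in S}\ell_i\ell_i'$ is squarefree in $\mathbf{C}[x,y]$ and hence not a square in $\mathbf{C}(x,y)$. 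With this correction your Kummer step goes through. You should also make explicit why $\overline{X_k}$ equals the full projective complete intersection $V(\tilde Q_1,\dots,\tilde Q_k)$ rather than a proper component---your observation that the locus at $w=0$ is one-dimensional, combined with irreducibility of $X_k$, handles this.
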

 For the remainder of this section, $X_k$ will refer to $X_k$ as in the above lemma. There
are several paths to showing that $\overline{X_k}$ is a surface of degree $2^k$; we will take
one of the longer routes, since some of the intermediate lemmas we prove along the way
will be useful later in this section and in Section~\ref{sec:ag2}.

\subsection{An explicit description of $\overline{X_k}$}

We begin by explicitly describing $\overline{X_k}$.
\begin{lemma}\label{lem:Xbar}
  Let $\overline{X_k}$ be as in Lemma~\ref{lem:generalX}. We have
  \begin{equation*}
  \overline{X_k} = \left\{[x:y:z:d_1:\dots:d_k]:\overline{Q_{m,P_j}}(x,y,z,d_j)=0\text{ for }j=1,\dots,k\right\},
\end{equation*}
where
\begin{equation*}
 \overline{Q_{m,P_j}}(x,y,z,d_j)=(x-a_jz)^2+m(y-b_jz)^2-d_j^2
\end{equation*}
for each $j=1,\dots,k$.
\end{lemma}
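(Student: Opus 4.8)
The claim is that the projective closure $\overline{X_k}$ of the affine variety $X_k$ equals the subvariety $Y_k$ of $\mathbf{P}^{k+2}$ cut out by the homogenizations $\overline{Q_{m,P_j}}$ of the defining polynomials $Q_{m,P_j}$. The inclusion $\overline{X_k}\subseteq Y_k$ is automatic: each $\overline{Q_{m,P_j}}$ is homogeneous and vanishes on $X_k$ (identified with its image in the standard affine chart $z=1$), so it vanishes on the Zariski closure $\overline{X_k}$ as well; hence $\overline{X_k}$ is contained in the common zero locus $Y_k$. The content of the lemma is the reverse inclusion, which amounts to showing that $Y_k$ has no extra components lying entirely in the hyperplane at infinity $\{z=0\}$ — in other words, that $Y_k$ is exactly the closure of its affine part.

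The plan is to analyze the part of $Y_k$ at infinity directly. Setting $z=0$ in the equations $\overline{Q_{m,P_j}}(x,y,z,d_j)=0$ gives, for every $j$, the single equation $x^2+my^2-d_j^2=0$; note the $P_j$-dependence disappears. So $Y_k\cap\{z=0\}$ is the subvariety of $\mathbf{P}^{k+1}$ (with coordinates $[x:y:d_1:\dots:d_k]$) defined by $d_j^2=x^2+my^2$ for all $j$, which forces $d_i^2=d_j^2$ for all $i,j$. This locus has dimension $\leq k-1$: once $x,y$ are chosen, each $d_j$ is determined up to sign, so it is a finite union of copies of $\mathbf{P}^1$ (the $[x:y]$-line), each of dimension $1$, and $1 \le (k+2)-1-1$ with room to spare once $k\ge 2$ — more precisely $\dim(Y_k\cap\{z=0\})\le 1 < k = \dim \overline{X_k}$ (using that $\overline{X_k}$ is a surface, which is established elsewhere in this section; alternatively one argues $\dim X_k = k$ directly since each equation $Q_{m,P_j}=0$ involves a fresh variable $d_j$ and cuts the dimension by exactly one). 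Since every irreducible component of $Y_k$ has dimension $\geq (k+2)-k = 2$ by Krull's height theorem (it is cut out by $k$ equations in $\mathbf{P}^{k+2}$), no component of $Y_k$ can be contained in $Y_k\cap\{z=0\}$; equivalently, every component of $Y_k$ meets the affine chart $\{z=1\}$ and is therefore the closure of (a component of) $X_k$. Thus $Y_k=\overline{X_k}$.

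The main obstacle is the bookkeeping around dimensions: one must be careful that $X_k$ really has pure dimension $k$ (so that $\overline{X_k}$ is a surface precisely when $k=2$ — but the lemma as stated calls $\overline{X_k}$ a surface only in Lemma~\ref{lem:generalX}, whereas Lemma~\ref{lem:Xbar} itself just asserts the set-theoretic description and holds for all $k$), and that the dimension-at-infinity bound genuinely beats the Krull lower bound on component dimension. The cleanest route is: $X_k$ is a complete intersection of the $k$ hypersurfaces $Q_{m,P_j}=0$ in $\mathbf{C}^{k+2}$ because one can order the variables so that introducing $d_j$ with the equation $d_j^2 = (x-a_jz)^2+m(y-b_jz)^2$ cuts dimension by exactly $1$ at each stage (the right-hand side is not a perfect square in the relevant coordinate ring, so $Q_{m,P_j}$ is a nonzerodivisor), giving $\dim X_k=k$ and, by the projective analogue, that $Y_k$ is a complete intersection of dimension exactly $k$ with every component of dimension $k\ge 2$. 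Then the inequality $\dim(Y_k\cap\{z=0\})\le 1<k$ closes the argument. I would present this as: (i) $\overline{X_k}\subseteq Y_k$ trivially; (ii) $Y_k$ is a complete intersection, so equidimensional of dimension $k$; (iii) compute $Y_k\cap\{z=0\}$ explicitly and bound its dimension by $1$; (iv) conclude no component of $Y_k$ is at infinity, hence $Y_k=\overline{X_k}$.
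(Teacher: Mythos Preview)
Your approach is correct and genuinely different from the paper's. The paper establishes Lemma~\ref{lem:Xbar} via Gr\"obner bases: it proves (Lemma~\ref{lem:gb}) that $\{Q_{m,P_1},\dots,Q_{m,P_k}\}$ is a Gr\"obner basis for $I=\langle Q_{m,P_1},\dots,Q_{m,P_k}\rangle$ with respect to the grlex order with $d_1>\cdots>d_k>x>y$, and then cites the standard fact that homogenizing a Gr\"obner basis for a graded order yields defining equations for the projective closure. You instead argue geometrically, computing $Y_k\cap\{z=0\}$ explicitly, bounding its dimension by $1$, and invoking Krull's height theorem to get $\dim Z\ge (k+2)-k=2$ for every irreducible component $Z$ of $Y_k$, so that no component can lie at infinity. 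This is a clean, self-contained route to the lemma. The trade-off is that the paper's Gr\"obner basis lemma is reused later---both to read off $\dim\overline{X_k}=2$ (Lemma~\ref{lem:surface}) and, more substantially, in the proof of Lemma~\ref{lem:Cdimdeg} to show that a polynomial $Q(x,y)$ cannot lie in $\sqrt{I}$---so the paper's detour through Gr\"obner bases amortizes over several results, whereas your approach would require separate arguments at those points.

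One persistent slip to fix: you write $\dim X_k=k$ and $\dim Y_k=k$ several times, but cutting $\mathbf{C}^{k+2}$ by $k$ equations gives dimension $(k+2)-k=2$, not $k$ (your own Krull computation gets this right). The core inequality you need is $1<2$, which holds unconditionally; your version $1<k$ would spuriously exclude $k=1$.
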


Let $\mathcal{M}$ denote the set of monomials in the variables $x,y,d_1,\dots,d_k$. Any
polynomial $Q\in\mathbf{C}[x,y,d_1,\dots,d_k]$ can be written as
\begin{equation*}
  Q=\sum_{\mathbf{m}\in \mathcal{M}}a_{\mathbf{m}}\mathbf{m},
\end{equation*}
where $a_{\mathbf{m}}=0$ for all but finitely many $\mathbf{m}\in \mathcal{M}$. We say that a
monomial $\mathbf{m}$ \textit{appears} in $Q$ if $a_{\mathbf{m}}\neq 0$ in the above expression, and denote the homogenization of $Q$ with respect to $z$ by $\overline{Q}:=z^{\deg{Q}}Q(x/z,y/z,d_1/z,\dots,d_k/z)$, where $\deg{Q}$ is the maximal total degree of any monomial appearing in $Q$.

 To prove Lemma~\ref{lem:Xbar}, it suffices, by standard results (e.g., combine
 Theorems~4 and~8 of~\cite[Chapter 8 \S 4]{CoxLittleOShea07}), to show that
 $Q_{m,P_1},\dots,Q_{m,P_k}$ is a Gr\"obner basis\footnote{See Definition~5 in
   \cite[Chaper 2 \S 5]{CoxLittleOShea07} for the definition of a Gr\"obner basis.} for $I:=\langle Q_{m,P_1},\dots,Q_{m,P_k}\rangle$ with respect to some graded monomial order. Recall that a \textit{monomial order} is a total order $\succ$ on $\mathcal{M}$ satisfying
\begin{enumerate}
\item $\mathbf{m}\succ 1$ for all $1\neq \mathbf{m}\in \mathcal{M}$ and
\item $\mathbf{m}\succ\mathbf{m}'$ implies that $\mathbf{m}\mathbf{m}''\succ\mathbf{m}'\mathbf{m}''$ for all $\mathbf{m}''\in \mathcal{M}$.
\end{enumerate}
A monomial order $\succ$ on $\mathcal{M}$ is said to be \textit{graded} if
\begin{equation*}
  d_1^{c_1}\cdots d_k^{c_k}x^{c_{k+1}}y^{c_{k+2}}\succ d_1^{c'_1}\cdots d_k^{c'_k}x^{c_{k+1}'}y^{c_{k+2}'}
\end{equation*}
whenever $c_1+\dots+c_{k+2}>c'_1+\dots+c'_{k+2}$. Once a monomial order $\succ$ has been fixed, one can write any $Q\in\mathbf{C}[x,y,d_1,\dots,d_k]$ as
\begin{equation*}
  Q=a_1\mathbf{m}_1+\dots+a_n\mathbf{m}_n,
\end{equation*}
where $\mathbf{m}_j\in \mathcal{M}$ and $a_j\neq 0$ for all $j=1,\dots,n$, and where
$\mathbf{m}_1\succ\dots\succ\mathbf{m}_n$. Then the \textit{leading term} of $Q$ is
defined to be $$\lt{Q}:= a_1\mathbf{m}_1,$$ and the \textit{leading monomial} of $Q$ is
defined to be $$\lm{Q}:=\mathbf{m}_1.$$ 
Finally, recall that the multivariable polynomial division algorithm (see, for
example,~\cite[Chapter 2 \S 3]{CoxLittleOShea07}) says that \textit{the remainder of $Q$ under division by a set of polynomials $G=\{g_1,\dots,g_n\}\subset\mathbf{C}[x,y,d_1,\dots,d_k]$ is zero} if and only if we can write
\begin{equation*}
  Q=h_1g_1+\dots+h_ng_n,
\end{equation*}
where each $h_j\in\mathbf{C}[x,y,d_1,\dots,d_k]$ and $\lm{Q}\succeq \lm{h_jg_j}$ whenever $h_jg_j\neq 0$.

To prove Lemma~\ref{lem:Xbar}, we will use the graded lexicographic (grlex) order $\succ$ with $d_1\succ\dots\succ d_k\succ x\succ y$, which is defined by declaring that $d_1^{c_1}\cdots d_k^{c_k}x^{c_{k+1}}y^{c_{k+2}}\succ d_1^{c'_1}\cdots d_k^{c'_k}x^{c_{k+1}'}y^{c_{k+2}'}$ if and only if one of the following two conditions is met:
\begin{enumerate}
\item $c_1+\dots+c_{k+2}>c'_1+\dots+c'_{k+2}$ or
\item $c_1+\dots+c_{k+2}=c'_1+\dots+c'_{k+2}$ and $c_{j}>c_{j}'$ if $j$ is the smallest index for which $c_j\neq c'_j$.
\end{enumerate}
We will also require Buchberger's criterion (see~\cite[Chapter 2]{CoxLittleOShea07}), which gives a convenient way to check whether a generating set of an ideal is a Gr\"obner basis.
\begin{lemma}[Buchberger's criterion]
  Let $J$ be an ideal of $\mathbf{C}[x,y,d_1,\dots,d_k]$ and $G=\{g_1,\dots,g_n\}$ be a basis for $J$. Then $G$ is a Gr\"obner basis with respect to a monomial order $\succ$ if and only if each $S$-polynomial
  \begin{equation*}
    S(g_i,g_j) := \frac{\lcm(\lm{g_i},\lm{g_j})}{\lt{g_i}}g_i-\frac{\lcm(\lm{g_i},\lm{g_j})}{\lt{g_j}}g_j
  \end{equation*}
  has remainder zero on division by $G$.
\end{lemma}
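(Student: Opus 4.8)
The plan is to establish the two directions of Buchberger's criterion by the classical route (as in~\cite[Chapter 2, \S6]{CoxLittleOShea07}), whose heart is a cancellation lemma for sums of polynomials whose leading terms cancel. The ``only if'' direction is quick: if $G$ is a Gr\"obner basis, then $\langle\lt{g_1},\dots,\lt{g_n}\rangle$ coincides with the ideal generated by $\{\lt{f}:f\in J\setminus\{0\}\}$, so were some $f\in J$ to have nonzero remainder $r$ on division by $G$, the element $r=f-\sum_ih_ig_i$ would lie in $J\setminus\{0\}$ and have $\lt{r}\in\langle\lt{g_1},\dots,\lt{g_n}\rangle$, contradicting that no term of a remainder is divisible by any $\lt{g_i}$. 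In particular, since each $S(g_i,g_j)$ lies in $J$, it reduces to zero.

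For the ``if'' direction, assume every $S$-polynomial $S(g_i,g_j)$ reduces to zero on division by $G$. It suffices to show $\lt{f}\in\langle\lt{g_1},\dots,\lt{g_n}\rangle$ for every nonzero $f\in J$. Fix such an $f$ and, among all representations $f=\sum_{i=1}^nh_ig_i$ with $h_i\in\mathbf{C}[x,y,d_1,\dots,d_k]$, choose one minimising $\delta:=\max_i\multideg(h_ig_i)$ with respect to $\succ$; this is possible because $\succ$ is a well-ordering. Necessarily $\multideg(f)\preceq\delta$. If $\multideg(f)=\delta$, choose any index $i$ with $\multideg(h_ig_i)=\delta$ (one exists by definition of $\delta$); since the exponent vectors satisfy $\multideg(h_i)+\multideg(g_i)=\delta$, the monomial $\lm{g_i}$ divides $\lm{f}$, so $\lt{g_i}\mid\lt{f}$ and we are done. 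The remaining case, $\multideg(f)\prec\delta$, must be shown impossible.

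Suppose $\multideg(f)\prec\delta$ and write $f=\sum_{\multideg(h_ig_i)=\delta}\lt{h_i}g_i+(\text{terms of multidegree}\prec\delta)$. Since $\multideg(f)\prec\delta$, the sum $p:=\sum_{\multideg(h_ig_i)=\delta}\lt{h_i}g_i$ also has multidegree $\prec\delta$, i.e.\ its degree-$\delta$ leading terms cancel. The cancellation lemma then applies: a $\mathbf{C}$-linear combination $\sum_i a_ip_i$ of monomial multiples $p_i$ of the $g_i$, all with $\lt{p_i}$ of the same multidegree $\delta$ and with the leading coefficients summing to zero, can be rewritten as $\sum_{j,l}c_{jl}\,\mathbf{m}_{jl}\,S(g_j,g_l)$ for constants $c_{jl}\in\mathbf{C}$ and monomials $\mathbf{m}_{jl}$ with $\multideg(\mathbf{m}_{jl}S(g_j,g_l))\prec\delta$. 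Granting this, I substitute into each term the expression $S(g_j,g_l)=\sum_iA_{ijl}g_i$ with $\multideg(A_{ijl}g_i)\preceq\multideg(S(g_j,g_l))$ furnished by the hypothesis; then $\multideg(\mathbf{m}_{jl}A_{ijl}g_i)\prec\delta$, so collecting terms rewrites $p$, and hence $f$, as $\sum_i\tilde h_ig_i$ with $\max_i\multideg(\tilde h_ig_i)\prec\delta$. This contradicts the minimality of $\delta$, so in fact $\multideg(f)=\delta$, completing the argument.

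The main work, and the expected obstacle, is the cancellation lemma, which I would prove by an Abel-summation (telescoping) argument. Writing the relevant summands of $p$ in order as $a_1p_1,\dots,a_sp_s$, with each $p_i$ a monomial multiple of some $g_i$ normalised so that $\lt{p_i}=x^\delta$ (the monomial with exponent vector $\delta$), cancellation of the degree-$\delta$ terms forces $\sum_{i=1}^sa_i=0$, whence
\[
  p=\sum_{i=1}^sa_ip_i=\sum_{i=1}^{s-1}(a_1+\dots+a_i)(p_i-p_{i+1}),
\]
the boundary term dropping out. In the difference $p_i-p_{i+1}$ the leading terms $x^\delta$ cancel, so $\multideg(p_i-p_{i+1})\prec\delta$; moreover, since $\lm{g_i}$ and $\lm{g_{i+1}}$ both divide $x^\delta$, a direct computation gives $p_i-p_{i+1}=\frac{x^\delta}{\lcm(\lm{g_i},\lm{g_{i+1}})}\,S(g_i,g_{i+1})$, a monomial multiple of an $S$-polynomial of consecutive summands, of multidegree $\prec\delta$. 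Since an arbitrary pair $S(g_j,g_l)$ occurs this way after suitably ordering the summands, this yields the required expression and proves the lemma, and hence the criterion. The one point needing care is bookkeeping the monomial order throughout so that every intermediate term genuinely has multidegree strictly below $\delta$; everything else is routine.
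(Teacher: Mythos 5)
The paper does not prove this lemma at all --- it is quoted as a standard result with a citation to Cox--Little--O'Shea --- and your proposal is precisely the classical proof from that reference: the only-if direction via the defining property of remainders, the if direction via a representation of minimal $\delta=\max_i\multideg(h_ig_i)$, and the telescoping/Abel-summation cancellation lemma expressing a cancelling sum of monomial multiples of the $g_i$ as monomial multiples of $S$-polynomials of multidegree $\prec\delta$. The argument is correct (and correctly uses the paper's notion of ``remainder zero,'' i.e.\ the existence of a representation $S(g_j,g_l)=\sum_iA_{ijl}g_i$ with $\multideg(A_{ijl}g_i)\preceq\multideg(S(g_j,g_l))$, in the substitution step), so there is nothing to compare against and nothing to fix.
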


\begin{lemma}\label{lem:gb}
With respect to the grlex ordering $\succ$ with $d_1\succ\dots\succ d_k\succ x\succ y$ on $\mathcal{M}$, $Q_{m,P_1},\dots,Q_{m,P_k}$ is a Gr\"obner basis for $I=\langle Q_{m,P_1},\dots,Q_{m,P_k}\rangle$.
\end{lemma}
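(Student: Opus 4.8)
The plan is to verify Buchberger's criterion directly. First I would expand each generator as $Q_{m,P_j}(x,y,d_j) = x^2 + my^2 - d_j^2 - 2a_jx - 2mb_jy + (a_j^2+mb_j^2)$ and observe that its monomials of top total degree $2$ are exactly $x^2$, $my^2$ and $-d_j^2$. Since the grlex order first compares total degrees and, within degree $2$, ranks $d_j^2 \succ x^2 \succ y^2$ (as $d_1 > \dots > d_k > x > y$), this shows $\lt(Q_{m,P_j}) = -d_j^2$ and hence $\lm(Q_{m,P_j}) = d_j^2$ for every $j = 1,\dots,k$. The one point to be careful about is exactly this step: one must check that the leading monomial of each generator is the $d_j^2$ term, not one of the two quadratic monomials in $x,y$.

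Next, for $i \ne j$ the leading monomials $d_i^2$ and $d_j^2$ are relatively prime, so $\lcm(\lm(Q_{m,P_i}),\lm(Q_{m,P_j})) = d_i^2 d_j^2$ and the $S$-polynomial is $S(Q_{m,P_i},Q_{m,P_j}) = -d_j^2\,Q_{m,P_i} + d_i^2\,Q_{m,P_j}$. Writing $R_\ell := (x-a_\ell)^2 + m(y-b_\ell)^2 \in \mathbf{C}[x,y]$, so that $Q_{m,P_\ell} = R_\ell - d_\ell^2$, and substituting $d_\ell^2 = R_\ell - Q_{m,P_\ell}$ into this expression, one obtains after cancellation of the degree-$4$ pure-$d$ terms the identity $S(Q_{m,P_i},Q_{m,P_j}) = -R_j\,Q_{m,P_i} + R_i\,Q_{m,P_j}$.

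Finally I would check that this representation certifies remainder zero on division by $G = \{Q_{m,P_1},\dots,Q_{m,P_k}\}$ in the sense recalled before the lemma: since $\lm(R_\ell) = x^2$, we have $\lm(R_j\,Q_{m,P_i}) = x^2 d_i^2$ and $\lm(R_i\,Q_{m,P_j}) = x^2 d_j^2$, while $S(Q_{m,P_i},Q_{m,P_j}) = d_i^2 R_j - d_j^2 R_i$ has leading monomial $x^2 d_{\min(i,j)}^2$ (the two degree-$4$ terms $d_i^2x^2$ and $d_j^2x^2$ cannot cancel, as $i\ne j$), which dominates both $x^2d_i^2$ and $x^2d_j^2$. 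Thus every $S$-polynomial reduces to zero, and Buchberger's criterion yields that $Q_{m,P_1},\dots,Q_{m,P_k}$ is a Gr\"obner basis for $I$. Alternatively, one can skip the explicit computation and simply invoke the standard fact that $S$-polynomials built from generators with coprime leading monomials always reduce to zero (see~\cite[Chapter 2 \S 9]{CoxLittleOShea07}). I do not expect any genuine difficulty beyond the bookkeeping with the grlex order mentioned above.
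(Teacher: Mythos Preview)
Your proposal is correct and follows essentially the same route as the paper: both identify $\lm(Q_{m,P_j})=d_j^2$, compute the $S$-polynomial explicitly, rewrite it as a combination $-R_j\,Q_{m,P_i}+R_i\,Q_{m,P_j}$ with $R_\ell=(x-a_\ell)^2+m(y-b_\ell)^2$, and verify that the leading monomials of the summands are bounded by $\lm S=d_{\min(i,j)}^2x^2$. Your remark about the coprime-leading-monomial shortcut is a valid alternative the paper does not invoke, but the explicit computation you give matches the paper's almost line for line.
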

\begin{proof}
By Buchberger's criterion, it suffices to check that each $S$-polynomial $S(Q_{m,P_i},Q_{m,P_j})$
  with $i\neq j$ has remainder zero on division by 
  \begin{equation*}
      G:=\left\{Q_{m,P_1},\dots,Q_{m,P_k}\right\}.
  \end{equation*}
    So let $i<j$, and
  note that, since $\lm{Q_{m,P_i}}=\lt{Q_{m,P_i}}=d_i^2$ and $\lm{Q_{m,P_j}}=\lt{Q_{m,P_j}}=d_j^2$, we have
  \begin{align*}
    S(Q_{m,P_i},Q_{m,P_j}) &= d_j^2 Q_{m,P_i} - d_i^2 Q_{m,P_j} \\
               &= d_j^2[(x-a_i)^2+m(y-b_i)^2]-d_i^2[(x-a_j)^2+m(y-b_j)^2] \\
               &= -Q_{m,P_j}[(x-a_i)^2+m(y-b_i)^2]+Q_{m,P_i}[(x-a_j)^2+m(y-b_j)^2].
  \end{align*}
  From the second line above, we see that $\lm{S(Q_{m,P_i},Q_{m,P_j})}=d_i^2x^2$, so since
  \[\lm{-Q_{m,P_j}[(x-a_i)^2+m(y-b_i)^2]}=d_j^2x^2\preceq \lm{S(Q_{m,P_i},Q_{m,P_j})}\] and
  \[\lm{Q_{m,P_i}[(x-a_j)^2+m(y-b_j)^2]}=d_i^2x^2\preceq \lm{S(Q_{m,P_i},Q_{m,P_j})},\] we certainly have that $S(Q_{m,P_i},Q_{m,P_j})$ has
  remainder zero on division by $G$.
\end{proof}

By the discussion above, Lemma~\ref{lem:Xbar} now follows.

\subsection{Dimension and degree of $\overline{X_k}$}

Next, we will confirm that $\overline{X_k}$ is a surface of degree $2^k$. There are a few
ways to see that $\overline{X_k}$ is a projective surface, but perhaps the fastest is to
note that $\dim{\overline{X_k}}=\dim{X_k}$, and to use the general fact (see~\cite[Chapter
9 \S 3]{CoxLittleOShea07}) that if $J\subset\mathbf{C}[x,y,d_1,\dots,d_k]$ is an ideal and $G=\{g_1,\dots,g_\ell\}$ is a Gr\"obner basis for $J$ with respect to a graded monomial order, then $\dim{V(J)}$ equals the maximal size of a subset $A$ of the variables $\{x,y,d_1,\dots,d_k\}$ such that, for all $j=1,\dots,\ell$, the leading term of $g_j$ contains a variable not in $A$. The following lemma is then an immediate consequence of Lemma~\ref{lem:gb} (by taking $A=\{x,y\}$).

\begin{lemma}\label{lem:surface}
  Let $\overline{X_k}$ be as in Lemma~\ref{lem:generalX}. Then $\dim\overline{X_k}=2$ (i.e., $\overline{X_k}$ is a surface).
\end{lemma}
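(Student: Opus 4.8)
The plan is to read the dimension straight off the Gröbner basis computed in Lemma~\ref{lem:gb}, using the standard fact that the projective closure of a nonempty affine variety has the same dimension as the variety itself. First I would note that $X_k$ is nonempty — indeed the projection $(x,y,d_1,\dots,d_k)\mapsto(x,y)$ maps $X_k$ onto all of $\mathbf{C}^2$, since for any $x,y\in\mathbf{C}$ one can solve $d_j^2=(x-a_j)^2+m(y-b_j)^2$ for each $j$ — so that $\dim\overline{X_k}=\dim X_k$, and it suffices to prove $\dim X_k=\dim V(I)=2$, where $I=\langle Q_{m,P_1},\dots,Q_{m,P_k}\rangle$.

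Next I would apply the combinatorial dimension criterion for graded Gröbner bases recalled above (see~\cite[Chapter 9 \S 3]{CoxLittleOShea07}): if $G=\{g_1,\dots,g_\ell\}$ is a Gröbner basis of $I$ with respect to a graded monomial order, then $\dim V(I)$ equals the largest cardinality of a subset $A\subseteq\{x,y,d_1,\dots,d_k\}$ such that, for every $j$, the leading term $\lt{g_j}$ involves at least one variable outside $A$. By Lemma~\ref{lem:gb}, $G=\{Q_{m,P_1},\dots,Q_{m,P_k}\}$ is such a basis for the grlex order (which is graded by construction), and $\lt{Q_{m,P_j}}=d_j^2$ is a monomial in the single variable $d_j$.

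It then remains to identify the admissible subsets $A$. Since $\lt{Q_{m,P_j}}$ involves only $d_j$, the condition ``$\lt{Q_{m,P_j}}$ involves a variable not in $A$'' is equivalent to ``$d_j\notin A$''; hence $A$ is admissible precisely when $A\cap\{d_1,\dots,d_k\}=\emptyset$, i.e. $A\subseteq\{x,y\}$, and the largest such $A$ is $\{x,y\}$, of size $2$. This gives $\dim X_k=2$ and therefore $\dim\overline{X_k}=2$. (The lower bound $\dim X_k\geq(k+2)-k=2$ is in any case automatic from the fact that $X_k$ is cut out by $k$ equations in $\mathbf{C}^{k+2}$.) I do not anticipate any real obstacle here — the substantive work was already done in Lemma~\ref{lem:gb}, and what is left is essentially mechanical; the only points needing a little care are that grlex is genuinely a graded order, that $X_k$ is nonempty so the projective-closure dimension equality is valid, and that one uses the \emph{maximality} half of the criterion (no admissible $A$ contains any $d_j$), which is what pins the dimension down to exactly $2$.
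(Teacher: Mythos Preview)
Your proof is correct and follows exactly the same approach as the paper: reduce to the affine dimension, then apply the combinatorial dimension criterion from~\cite[Chapter 9 \S 3]{CoxLittleOShea07} to the Gr\"obner basis of Lemma~\ref{lem:gb}, identifying $A=\{x,y\}$ as the largest admissible subset. You have simply fleshed out the details (nonemptiness, the explicit check that admissible $A$ are precisely the subsets of $\{x,y\}$) that the paper leaves implicit.
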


Now, we will show that $\overline{X_k}$ has degree $2^k$. This will follow easily from two
standard facts, which we record here for later use. Both are immediate consequences
of Theorem 7.7 from~\cite[Chapter I]{Hartshorne77}.

\begin{lemma}[B\'ezout's inequality]\label{lem:Bezout}
Let $V=V(g_1,\dots,g_\ell)\subset\mathbf{P}^n$ be a projective variety. We have
\begin{equation*}
  \deg{V}\leq\prod_{j=1}^\ell\deg{g_j}.
\end{equation*}
\end{lemma}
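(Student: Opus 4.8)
The plan is to prove the inequality by induction on $\ell$, stripping off one defining polynomial at a time and invoking at each step Theorem~7.7 of \cite[Chapter~I]{Hartshorne77}. Recall that that theorem, combined with the additivity of degree over equidimensional components, furnishes exactly what is needed: if $Y\subset\mathbf{P}^n$ is an irreducible variety of dimension at least $1$ and $g$ is a nonzero homogeneous polynomial not vanishing on $Y$, then every irreducible component $Z$ of $Y\cap V(g)$ has dimension $\dim Y-1$ and $\sum_Z i(Y,V(g);Z)\deg Z=(\deg Y)(\deg V(g))$ with all intersection multiplicities positive integers, so that in particular $\deg(Y\cap V(g))\leq(\deg Y)(\deg g)$ (using $\deg V(g)\leq\deg g$, with equality when $g$ is squarefree).

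For the base case $\ell=0$ one has $V=\mathbf{P}^n$, of degree $1$, which is the empty product. For the inductive step, set $W:=V(g_1,\dots,g_{\ell-1})$ and $H:=V(g_\ell)$, so that $V=W\cap H$; if $g_\ell$ vanishes identically on $W$ then $V=W$ and there is nothing to prove, so assume otherwise. Decompose $W=\bigcup_i W_i$ into irreducible components. Since $V=\bigcup_i(W_i\cap H)$, every irreducible component of $V$ occurs among the irreducible components of the sets $W_i\cap H$; hence, by additivity of degree (any overcounting caused by repeated components only helps), $\deg V\leq\sum_i\deg(W_i\cap H)$. Now bound each summand by $(\deg W_i)(\deg g_\ell)$: if $W_i\subseteq H$, then $W_i\cap H=W_i$ and the bound is clear since $\deg g_\ell\geq1$; if $W_i$ is a point not lying on $H$, then $W_i\cap H=\emptyset$; and in the remaining case ($\dim W_i\geq1$, $W_i\not\subseteq H$) the consequence of Theorem~7.7 recalled above gives $\deg(W_i\cap H)\leq(\deg W_i)(\deg g_\ell)$ directly. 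Summing over $i$ and feeding in the inductive hypothesis $\sum_i\deg W_i=\deg W\leq\prod_{j=1}^{\ell-1}\deg g_j$ yields $\deg V\leq\prod_{j=1}^{\ell}\deg g_j$, completing the induction.

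I do not expect a genuine obstacle here; the hard part, such as it is, will be purely bookkeeping. One must adopt a consistent convention for the degree of a reducible, and possibly non-equidimensional, variety — namely the sum of the degrees of its irreducible components (or of its top-dimensional ones) — and make sure the additivity statement drawn from \cite[Chapter~I]{Hartshorne77} is only applied to equidimensional pieces; one should also record the elementary reduction that handles a reducible or non-squarefree $g_\ell$ (split $H$ into its irreducible components, apply Theorem~7.7 to each, and use that the sum of their degrees is at most $\deg g_\ell$); and one must dispatch the degenerate cases ($W_i\subseteq H$, $\dim W_i=0$, $g_\ell$ vanishing on $W$) by hand, as above. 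Everything beyond these routine verifications is already contained in Theorem~7.7 and the additivity of degree, which is why the paper records the statement without further argument.
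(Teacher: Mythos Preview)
Your proposal is correct and matches the paper's approach: the paper does not give a proof at all, merely recording that the lemma is an immediate consequence of Theorem~7.7 of \cite[Chapter~I]{Hartshorne77}, which is precisely the result you invoke and unpack via the natural induction on $\ell$. Your additional discussion of the bookkeeping (conventions for degree of non-equidimensional varieties, reducible $g_\ell$, degenerate cases) is appropriate and not needed by the paper since it treats the statement as standard.
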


\begin{lemma}\label{lem:deglow}
Let $V\subset\mathbf{P}^n$ be a projective variety of dimension $d$. If
$H\subset\mathbf{P}^n$ is any linear subvariety of codimension $d$ such that $|V\cap H|$
is finite, then $\deg{V}$ is at
least $|V\cap H|$.
\end{lemma}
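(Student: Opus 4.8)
## Proof Plan for Lemma~\ref{lem:deglow}

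The plan is to show that $\deg V \geq |V \cap H|$ by exhibiting a hyperplane section whose intersection with $V$ is a zero-dimensional scheme of length at least $|V \cap H|$, and then invoking the fact that the degree of $V$ equals the length of a generic zero-dimensional linear section. Recall that $\deg V$ can be computed (see Theorem~I.7.7 of~\cite{Hartshorne77}) as the number of points, counted with multiplicity, in the intersection of $V$ with a generic linear subvariety $L$ of complementary dimension, i.e.\ of codimension $d$; equivalently, it is the leading coefficient (times $d!$) of the Hilbert polynomial of $V$. The key point is that for \emph{any} linear subvariety $H$ of codimension $d$ for which $V \cap H$ is finite, the intersection cycle $V \cdot H$ has total degree exactly $\deg V$, and since each of the finitely many points of $V \cap H$ contributes an intersection multiplicity at least $1$, we get $|V \cap H| \leq \deg V$.

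Concretely, here are the steps I would carry out. First, reduce to the case where $H$ is cut out by $d$ hyperplanes $H_1, \dots, H_d$ in general position (any codimension-$d$ linear subvariety is such an intersection). Second, apply the refined B\'ezout theorem / the fact that for a hyperplane section the degree is preserved: if $V \cap H_1 \cap \cdots \cap H_i$ is a well-behaved section of pure dimension $d - i$ at each stage, then its degree (sum of degrees of components with multiplicity) remains $\deg V$. This is exactly the content of Theorem~I.7.7 in~\cite{Hartshorne77}, which states that intersecting with a hyperplane not containing any component drops the dimension by one and preserves the degree. Third, at the final stage, $V \cap H$ is a zero-dimensional scheme whose degree — the sum over its points $P$ of the local multiplicity $m_P$ — equals $\deg V$. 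Since each $m_P \geq 1$ and there are $|V \cap H|$ points, we conclude $|V \cap H| \leq \sum_P m_P = \deg V$.

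The one subtlety to handle with care is that $H$ is an \emph{arbitrary} codimension-$d$ linear space with finite intersection, not a generic one, so I cannot simply say "the generic linear section has $\deg V$ points." The resolution is that the degree-preservation under hyperplane section in Theorem~I.7.7 holds for \emph{any} hyperplane that does not contain an irreducible component of the variety being cut; and since at each stage we are cutting a variety of dimension $\geq 1$ by a hyperplane, and the final intersection $V \cap H$ is finite (hence each intermediate section has dimension exactly equal to what it should be, with no hyperplane swallowing a component), the hypotheses are met. Thus the main obstacle — ensuring the intermediate sections behave dimensionally — is automatically resolved by the hypothesis that $|V \cap H|$ is finite, which forces each of the $d$ successive hyperplane cuts to be proper. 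I expect this dimension-bookkeeping to be the only place requiring genuine attention; everything else is a direct citation of the standard intersection-theoretic facts recorded just above in the excerpt.
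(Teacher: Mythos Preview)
Your proposal is correct and follows exactly the route the paper indicates: the paper does not give a proof but simply records the lemma as an immediate consequence of Theorem~7.7 in~\cite[Chapter~I]{Hartshorne77}, and your argument is precisely the standard unpacking of that citation (iterated hyperplane sections, with the finiteness hypothesis guaranteeing each cut is proper so that degree is preserved and the final zero-cycle has total multiplicity $\deg V$). There is nothing to add.
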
 

Now we can finish this subsection.
\begin{lemma}\label{lem:generalXdeg}
Let $\overline{X_k}$ be as in Lemma~\ref{lem:generalX}. Then $\deg{\overline{X_k}}=2^k$.  
\end{lemma}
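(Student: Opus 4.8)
The plan is to prove $\deg{\overline{X_k}}=2^k$ by establishing the two matching inequalities. The upper bound $\deg{\overline{X_k}}\leq 2^k$ is immediate from B\'ezout's inequality (Lemma~\ref{lem:Bezout}): by Lemma~\ref{lem:Xbar}, $\overline{X_k}$ is cut out by the $k$ homogeneous polynomials $\overline{Q_{m,P_j}}$, each of degree $2$, so $\deg{\overline{X_k}}\leq\prod_{j=1}^k 2=2^k$. The real content is the lower bound, for which I would use Lemma~\ref{lem:deglow}: since $\overline{X_k}$ is a surface (Lemma~\ref{lem:surface}), it suffices to exhibit a linear subvariety $H\subset\mathbf{P}^{k+2}$ of codimension $2$ such that $\overline{X_k}\cap H$ is finite and has exactly $2^k$ points.

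The natural choice of $H$ is the one obtained by fixing the values of $x$ and $y$, i.e., $H=\{[x_0:y_0:1:d_1:\dots:d_k]\}$ for a well-chosen rational point $(x_0,y_0)\in\mathbf{C}^2$ (so $H$ also includes the appropriate points at infinity, but generically the intersection will lie in the affine chart $z=1$). With $x=x_0$, $y=y_0$ fixed, the defining equations decouple completely: the $j$-th equation becomes $d_j^2=(x_0-a_j)^2+m(y_0-b_j)^2$, which has exactly two solutions $d_j=\pm\sqrt{(x_0-a_j)^2+m(y_0-b_j)^2}$ provided the right-hand side is nonzero. Since the $k$ equations involve disjoint sets of the remaining variables $d_1,\dots,d_k$, the intersection $\overline{X_k}\cap H$ consists of exactly $2^k$ points, as long as we choose $(x_0,y_0)$ so that $(x_0-a_j)^2+m(y_0-b_j)^2\neq 0$ for every $j=1,\dots,k$. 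This is a codimension-one condition on $(x_0,y_0)$ for each $j$ (it fails only on the union of $k$ lines, or pairs of lines, in $\mathbf{C}^2$), so such a point certainly exists; one must also check that $H$ genuinely has codimension $2$, which is clear since it is the common zero locus of two independent linear forms $x-x_0 z$ and $y-y_0 z$.

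The one technical point to be careful about is the behavior at infinity: a priori $\overline{X_k}\cap H$ could pick up extra points with $z=0$, but those would only increase the count, not decrease it, and in any case we only need $|\overline{X_k}\cap H|\geq 2^k$ together with finiteness. For finiteness, note that on $H$ with $z=0$ the homogenized equations become $x_0^2 + m y_0^2 = d_j^2$ (after scaling, since $x=x_0 z$ forces $x=0$ when $z=0$... more precisely the points at infinity on $H$ satisfy $x=y=0$), giving again finitely many choices for each $d_j$; so $\overline{X_k}\cap H$ is finite, and Lemma~\ref{lem:deglow} applies to give $\deg{\overline{X_k}}\geq 2^k$. Combining the two inequalities yields $\deg{\overline{X_k}}=2^k$.

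I expect the main obstacle to be purely bookkeeping rather than conceptual: making sure the chosen $H$ meets $\overline{X_k}$ in a finite set and that none of the $2^k$ affine solutions collide (which is exactly what the genericity condition $(x_0-a_j)^2+m(y_0-b_j)^2\neq 0$ guarantees, using that this is a single nonzero polynomial in $x_0,y_0$ for each $j$ because $m>0$ is a fixed squarefree integer). The hypothesis that the $P_j$ are distinct and none is the origin does not actually seem needed for this particular lemma — it will matter for irreducibility — so the degree computation should go through cleanly once the intersection with $H$ is set up correctly.
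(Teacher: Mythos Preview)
Your proposal is correct and follows essentially the same route as the paper: B\'ezout for the upper bound, and Lemma~\ref{lem:deglow} applied to a codimension-$2$ linear subspace fixing $(x,y)$ for the lower bound. The paper simply makes the cleanest possible choice, $H=V(x,y)$ (i.e., $(x_0,y_0)=(0,0)$), so that the $2^k$ intersection points are $\bigl[0:0:1:\pm\sqrt{a_1^2+mb_1^2}:\dots:\pm\sqrt{a_k^2+mb_k^2}\bigr]$; this is precisely where the hypothesis that no $P_j$ is the origin enters (it guarantees $a_j^2+mb_j^2\neq 0$), so your closing remark that this hypothesis is not needed here is slightly off---it is not logically necessary, but it is exactly what makes the paper's specific $H$ work without any genericity argument. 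One small slip in your infinity analysis: on $H$ with $z=0$ you have $x=y=0$, and the homogenized equations then give $d_j^2=0$, not $x_0^2+my_0^2=d_j^2$; so in fact there are \emph{no} intersection points at infinity, and $\overline{X_k}\cap H$ consists of exactly the $2^k$ affine points.
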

\begin{proof}
  By Lemmas~\ref{lem:Xbar} and~\ref{lem:Bezout}, we certainly have $\deg{\overline{X_k}}\leq
  2^k$. For the lower bound, consider the linear variety $H$ of codimension $2$ defined by
  $H=V(x,y)$. Then
  \begin{equation*}
    \overline{X_k}\cap H=\left\{\left[0:0:1:\pm\sqrt{a_1^2+mb_1^2}:\dots:\pm\sqrt{a_k^2+mb_k^2}\right]\right\},
  \end{equation*}
  which has $2^k$ elements, since we stipulated that none of
  $(a_1,b_1),\dots,(a_k,b_k)$ are the origin. Thus, by Lemma~\ref{lem:deglow}, $\deg{\overline{X_k}}\geq
  2^k$ as well, so that $\deg{\overline{X_k}}=2^k$.
\end{proof}

\subsection{Irreducibility of $\overline{X_k}$}\label{sec:ag.irr}

Finally, we will show that $\overline{X_k}$ is irreducible.
\begin{lemma}\label{lem:generalXirred}
    Let $\overline{X_k}$ be as in Lemma~\ref{lem:generalX}. Then $\overline{X_k}$ is irreducible.
\end{lemma}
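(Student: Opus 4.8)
The plan is to show that the prime ideal candidate $I = \langle Q_{m,P_1},\dots,Q_{m,P_k}\rangle$ is in fact prime, equivalently that $\overline{X_k}$ is irreducible, by induction on $k$. The base case $k=0$ is trivial ($\overline{X_0} = \mathbf{P}^2$), and $k=1$ amounts to the classical fact that a conic of the form $(x-a_1z)^2 + m(y-b_1z)^2 - d_1^2$ is irreducible over $\mathbf{Q}$ (indeed over $\mathbf{C}$): as a quadratic form in three variables it has rank $3$ since $m \neq 0$, so it defines a smooth, hence irreducible, conic in $\mathbf{P}^2$ (with coordinates $x,y,d_1$ up to the extra $z$; more precisely one checks the affine quadric $(x-a_1)^2+m(y-b_1)^2-d_1^2$ is irreducible, which is standard). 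For the inductive step, I would realize $\overline{X_{k}}$ as fibered over $\overline{X_{k-1}}$: the projection $\pi\colon \overline{X_k}\to\overline{X_{k-1}}$ forgetting the last coordinate $d_k$ is a double cover, ramified exactly along the locus where $(x-a_kz)^2+m(y-b_kz)^2 = 0$, i.e.\ $d_k = 0$. Since $\overline{X_{k-1}}$ is irreducible by induction, $\overline{X_k}$ is irreducible unless this double cover splits, i.e.\ unless $(x-a_kz)^2+m(y-b_kz)^2$ is a perfect square in the function field $\mathbf{C}(\overline{X_{k-1}})$.

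So the crux reduces to a single claim: the rational function $f_k := (x-a_kz)^2 + m(y-b_kz)^2$ is not a square in $\mathbf{C}(\overline{X_{k-1}})$. One clean way to see this: $f_k$ factors over $\mathbf{C}$ as $\big((x-a_kz) + i\sqrt m\,(y-b_kz)\big)\big((x-a_kz) - i\sqrt m\,(y-b_kz)\big)$, a product of two distinct linear forms $L_k^+L_k^-$ (distinct because $(a_k,b_k)$ determines a genuine line pair, and the two factors would coincide only if both vanish identically, which they don't). Thus $f_k$ is a square in $\mathbf{C}(\overline{X_{k-1}})$ iff the divisor of $L_k^+/L_k^-$ on $\overline{X_{k-1}}$ is twice a principal divisor — in particular iff the hyperplane section $\{L_k^+ = 0\}\cap \overline{X_{k-1}}$ is even, i.e.\ $2$-divisible, in the divisor class group modulo the relation coming from $L_k^-$. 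To rule this out I would argue at the level of the generic point of a well-chosen curve on $\overline{X_{k-1}}$, or more concretely: intersect with the line $x = a_k z$, $y = b_k z$ (this is the center of the pencil cut out by $f_k$); on $\overline{X_{k-1}}$ this gives the point(s) $[a_k:b_k:1:\pm\sqrt{(a_k-a_1)^2+m(b_k-b_1)^2}:\cdots]$, and one checks $f_k$ vanishes to order exactly $1$ (not $2$) along a suitable divisor through such a point by a local computation, since $L_k^+$ and $L_k^-$ cut out transverse curves there. An order-one vanishing of a would-be square is a contradiction.

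Alternatively — and this may be the slicker route to actually write down — I would work over the generic point: let $\eta$ be the generic point of $\overline{X_{k-1}}$ and set $K = \mathbf{C}(\overline{X_{k-1}})$. Then $\overline{X_k}$ is irreducible over $\mathbf{C}$ iff $Q_{m,P_k}(x,y,d_k)$, viewed as the polynomial $d_k^2 - f_k$ over $K$, is irreducible over $K$, i.e.\ iff $f_k \notin (K^\times)^2$. Since $K$ is the function field of the irreducible variety $\overline{X_{k-1}}$, which is (the projective closure of) an affine variety with coordinate ring $\mathbf{C}[x,y,d_1,\dots,d_{k-1}]/I_{k-1}$, and since $x,y$ are algebraically independent there (recall $A = \{x,y\}$ was the transcendence base from Lemma~\ref{lem:surface}), $f_k$ restricted to the subfield $\mathbf{C}(x,y) \subseteq K$ is the honest bivariate polynomial $(x-a_k)^2 + m(y-b_k)^2$, which is squarefree in $\mathbf{C}[x,y]$ (it factors as a product of two distinct irreducible linear polynomials $L_k^+ L_k^-$). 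The only subtlety is that $f_k$ could still become a square in the extension $K/\mathbf{C}(x,y)$; but $K$ is obtained from $\mathbf{C}(x,y)$ by successively adjoining square roots $d_j = \sqrt{(x-a_j)^2+m(y-b_j)^2} = \sqrt{L_j^+L_j^-}$, and a standard argument with the multiplicative group $\mathbf{C}(x,y)^\times/(\mathbf{C}(x,y)^\times)^2$ — spanned in the relevant part by the classes of the distinct linear forms $L_j^{\pm}$ — shows that $[L_k^+L_k^-]$ is not in the subgroup generated by $\{[L_j^+L_j^-] : j < k\}$, because $L_k^+$ and $L_k^-$ are linear forms not proportional to any $L_j^\pm$ with $j<k$ (the $P_j$ are distinct). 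Hence $f_k$ remains a non-square in $K$, completing the induction.

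I expect the main obstacle to be making the "non-square in the iterated quadratic extension" step fully rigorous and clean: one must verify that the linear forms $L_1^{\pm},\dots,L_k^{\pm}$ are pairwise non-proportional in $\mathbf{C}[x,y]$ (which uses that the $P_j$ are distinct and that $m$ is nonzero so that $L_j^+ \neq L_j^-$), and then invoke the elementary fact that adjoining $\sqrt{u_1},\dots,\sqrt{u_r}$ to a field enlarges the "square-class" group $\mathbf{C}(x,y)^\times/(\mathbf{C}(x,y)^\times)^2$ in a controlled way — specifically, an element becomes a square in $\mathbf{C}(x,y)(\sqrt{u_1},\dots,\sqrt{u_r})$ iff its square class already lies in the subgroup generated by those of $u_1,\dots,u_r$. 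Unique factorization in $\mathbf{C}[x,y]$ makes the relevant linear-independence computation routine once set up correctly. Everything else — the double-cover description, transcendence of $x,y$, and the reduction to a square-class question — is formal given the Gröbner basis results already established.
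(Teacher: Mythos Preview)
Your proposal is correct and takes a genuinely different route from the paper. The paper argues topologically: it computes the singular locus $X_k'$, notes that the smooth part $Y_k=X_k\setminus X_k'$ is irreducible iff connected, and then proves path-connectedness of a dense open subset of $Y_k$ by realising it as a $2^k$-fold cover of $\mathbf{C}^2$ minus the $2k$ lines $\ell_j,\ell_j'$ and exhibiting explicit loops with prescribed winding numbers that connect all $2^k$ points of a fibre. Your approach is purely algebraic: induction on $k$, reducing the inductive step to the statement that $f_k=L_k^+L_k^-$ is not a square in the function field $K=\mathbf{C}(x,y)(\sqrt{f_1},\dots,\sqrt{f_{k-1}})$, and settling this by Kummer theory plus unique factorisation in $\mathbf{C}[x,y]$ --- the $2k$ affine-linear polynomials $L_j^\pm$ being pairwise non-associate irreducibles, their square classes are $\mathbf{F}_2$-independent, so $[L_k^+L_k^-]$ is not in the span of the $[L_j^+L_j^-]$ for $j<k$.

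Your argument is shorter and avoids topology entirely; the paper's is more geometric and makes the monodromy of the branched cover explicit. One practical advantage of the paper's route is that it transplants directly to the companion statement for $\overline{\mathcal{C}_k}$ in Section~\ref{sec:ag2}, where the base is an arbitrary irreducible curve $C'$ and $\mathbf{C}[C']$ need not be a UFD; there the crux becomes finding, for each $j$, a point of $C'$ at which $f_j$ vanishes to odd order (Lemma~\ref{lem:zj}), which is precisely the divisor-theoretic input your first sketch would also require. So the two approaches converge once one leaves the rational-surface setting.
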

Note that it suffices to check that $X_k$ is an irreducible affine variety. We will begin this section by computing the set of singular points of $X_k$.

\begin{lemma}\label{lem:singularpts}
  Let $X_k$ be as in Lemma~\ref{lem:generalX}, and denote the set of singular points of $X_k$ by $X'_k$. We have
  \begin{equation*}
    X'_k=\left\{(x,y,d_1,\dots,d_k)\in X_k:(x,y)=P_1,\dots,P_k\right\}.
  \end{equation*}
  Thus, $X'_k$ consists of $k2^{k-1}$ distinct points.
\end{lemma}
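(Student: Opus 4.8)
The plan is to compute the Jacobian of the defining system and identify exactly where it drops rank. Write $X_k=V(Q_{m,P_1},\dots,Q_{m,P_k})$ with $Q_{m,P_j}=(x-a_j)^2+m(y-b_j)^2-d_j^2$. The Jacobian matrix $J$ has $k$ rows and $k+2$ columns, with the $j$-th row equal to
\[
\bigl(2(x-a_j),\,2m(y-b_j),\,0,\dots,0,\,-2d_j,\,0,\dots,0\bigr),
\]
the $-2d_j$ sitting in the $d_j$-column. Since $X_k$ is a surface in $\mathbf{C}^{k+2}$ (Lemma~\ref{lem:surface}), a point of $X_k$ is singular precisely when $\operatorname{rank}J<k$, i.e.\ when the $k\times(k+2)$ matrix fails to have full row rank. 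First I would observe that, because of the sparse block structure in the $d$-columns, if all of $d_1,\dots,d_k$ are nonzero at a point then the $k$ rows are automatically linearly independent (each row has a pivot in its own distinct $d_j$-column), so the point is smooth. Hence singular points must have $d_j=0$ for at least one $j$.

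Next I would analyze what happens when some $d_j=0$. If $d_j=0$ but the row's first two entries $(2(x-a_j),2m(y-b_j))$ are not both zero, then that row still contributes a pivot in one of the $x,y$ columns that no other row's $d$-part can cancel — but one must be slightly careful, since several rows could simultaneously have vanishing $d$-entries and then compete in the $x,y$ columns. The cleanest route: the rank drops below $k$ iff the rows are dependent iff there is a nonzero vector $(\lambda_1,\dots,\lambda_k)$ with $\sum_j\lambda_j(\text{row}_j)=0$. Looking at the $d_i$-column gives $\lambda_i d_i=0$ for each $i$; looking at the $x$- and $y$-columns gives $\sum_j\lambda_j(x-a_j)=0$ and $\sum_j\lambda_j(y-b_j)=0$. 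So the singular locus is the set of points of $X_k$ where, for some nonempty $J_0=\{i:\lambda_i\neq0\}\subseteq\{1,\dots,k\}$, we have $d_i=0$ for all $i\in J_0$ and the vectors $\{P_i-(x,y):i\in J_0\}$ are linearly dependent (as vectors in $\mathbf{C}^2$) in a way witnessed by the $\lambda_i$. Now $d_i=0$ on $X_k$ forces $(x-a_i)^2+m(y-b_i)^2=0$ for that $i$. I would argue that having two distinct indices $i,i'$ with $d_i=d_{i'}=0$ generically over-determines $(x,y)$; the key claim to pin down is that the only points where the rank actually drops are those with $(x,y)$ equal to one of the $P_j$ (and the corresponding $d_j=0$). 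Indeed if $(x,y)=P_j$ then row $j$ becomes $(0,0,\dots,-2d_j,\dots,0)$ with $d_j^2=0$, i.e.\ identically zero, so the rank is at most $k-1$: the point is singular. Conversely, if $(x,y)\neq P_i$ for every $i$, then for each $i$ the pair $(x-a_i,m(y-b_i))$ is nonzero, so even when $d_i=0$ the $i$-th row is nonzero and has its leading nonzero entry in the $x$ or $y$ column; combined with the pivots in the $d$-columns of the rows with $d_i\neq0$, one checks the $k$ rows stay independent. This last verification — ruling out an accidental dependence among several rows that simultaneously have $d_i=0$ — is the step I expect to be the main obstacle, and I would handle it by noting that at most one index $i$ can have $(x,y)$ lying on the (complex) ``circle'' $Q_{m,P_i}=0$ while $(x,y)\neq P_i$ forces that row to have a genuine pivot; more carefully, if $d_{i}=d_{i'}=0$ for $i\neq i'$ then $(x-a_i)^2+m(y-b_i)^2=(x-a_{i'})^2+m(y-b_{i'})^2=0$, two conic conditions whose common solutions, together with $(x,y)\neq P_i,P_{i'}$, can be enumerated, and in all cases the surviving rows remain independent.

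Finally, for the count: the singular locus is $\{(x,y,d_1,\dots,d_k)\in X_k:(x,y)=P_j\text{ for some }j\}$. Fixing $j$ and setting $(x,y)=P_j=(a_j,b_j)$ forces $d_j=0$, while for each $i\neq j$ the equation $Q_{m,P_i}(a_j,b_j,d_i)=0$ reads $d_i^2=(a_j-a_i)^2+m(b_j-b_i)^2$, which is nonzero since $P_i\neq P_j$, giving exactly two choices $d_i=\pm\sqrt{(a_j-a_i)^2+m(b_j-b_i)^2}$. Hence there are $2^{k-1}$ points of $X_k$ with $(x,y)=P_j$, and summing over the $k$ choices of $j$ — these sets being disjoint since the $P_j$ are distinct — yields $k2^{k-1}$ singular points in total. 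One should also double-check there is no overcounting and that all $k2^{k-1}$ points are genuinely distinct, which is immediate from the $P_j$ being distinct. This completes the proof.
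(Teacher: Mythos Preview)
Your overall strategy---compute the Jacobian, note the pivot structure in the $d_j$-columns, and reduce to analyzing points where several $d_j$ vanish---is exactly the paper's approach, and your forward direction (``$(x,y)=P_j$ implies singular'') and your count of $k2^{k-1}$ points are both fine.

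The gap is in the converse. You correctly flag the obstacle: when $(x,y)\neq P_j$ for all $j$ but two or more $d_i$ vanish, the corresponding rows live entirely in the $x,y$ columns and one must rule out a dependence among them. You then assert that the common solutions ``can be enumerated, and in all cases the surviving rows remain independent'' without doing either. Two things are missing. First, you need to show that \emph{at most two} of the $d_i$ can vanish simultaneously when $(x,y)\neq P_j$ for all $j$; this is not automatic, and if three vanished you would have three vectors in $\mathbf{C}^2$, which are always dependent. The paper handles this by factoring the degenerate conic
\[
Q_{m,P_j}(x,y,0)=(x+i\sqrt{m}y-(a_j+i\sqrt{m}b_j))(x-i\sqrt{m}y-(a_j-i\sqrt{m}b_j)),
\]
so $d_j=0$ forces $(x,y)\in\ell_j\cup\ell_j'$; since the $\ell_j$ are pairwise parallel and distinct (likewise the $\ell_j'$), pigeonhole rules out three simultaneous vanishings. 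Second, when exactly $d_j=d_{j'}=0$ (so $(x,y)\in\ell_j\cap\ell_{j'}'$, say), you must check that the $2\times2$ block
\[
\begin{pmatrix} x-a_j & m(y-b_j)\\ x-a_{j'} & m(y-b_{j'})\end{pmatrix}
\]
is nonsingular at that specific intersection point; the paper computes this point explicitly and finds the determinant equals $\tfrac{im}{2}\bigl(\sqrt{m}(b_j-b_{j'})^2+(a_j-a_{j'})^2/\sqrt{m}\bigr)\neq0$. Without these two verifications, the argument that ``the $k$ rows stay independent'' is only a claim.
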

\begin{proof} 
  Observe that $Q_{m,P_j}(x,y,0)=(x-a_j)^2+m(y-b_j)^2$ factors as
  \begin{equation*}
    Q_{m,P_j}(x,y,0)=\left(x+i\sqrt{m}y-(a_j+i\sqrt{m}b_j)\right)\left(x-i\sqrt{m}y-(a_j-i\sqrt{m}b_j)\right).
  \end{equation*}
  Thus, $Q_{m,P_j}(x,y,0)=0$ if and only if the pair $(x,y)\in\mathbf{C}^2$ lies on at least one of the lines
  \begin{equation*}
    \ell_j : x+i\sqrt{m}y=a_j+i\sqrt{m}b_j
  \end{equation*}
  or
  \begin{equation*}
   \ell_{j}' : x-i\sqrt{m}y=a_{j}-i\sqrt{m}b_{j},
  \end{equation*}
  which intersect at the point $(x,y)=P_j$. Since $P_1,\dots,P_k$ are all distinct, we must have $\ell_j\cap\ell_{j'}=\ell_{j}'\cap\ell_{j'}'=\emptyset$ whenever $j\neq j'$. A short calculation shows that $\ell_j$ and $\ell_{j'}'$ intersect at the point $\left(\frac{a_j+a_{j'}}{2}+i\sqrt{m}\frac{b_j-b_{j'}}{2},\frac{b_j+b_{j'}}{2}-i\frac{a_j-a_{j'}}{2\sqrt{m}}\right)$ when $j\neq j'$.
  
The Jacobian matrix of $X_k$ at a point $P=(x,y,d_1,\dots,d_k)$ is the $k\times (k+2)$ matrix
\begin{equation*}
  \begin{pmatrix}
    2(x-a_1) & 2m(y-b_1) & -2d_1 & 0 & \dots & 0 \\
    2(x-a_2) & 2m(y-b_2) & 0 & -2d_2 & \dots & 0 \\
    \vdots & \vdots & \vdots & \vdots & \ddots & \vdots \\
    2(x-a_k) & 2m(y-b_k) & 0 & \dots & 0 & -2d_k
  \end{pmatrix}.
\end{equation*}
If $P=(x,y,d_1,\dots,d_k)\in X_k$ with $(x,y)=P_j$ for some $i=1,\dots,k$, then $P$ is
clearly a singular point of $X_k$, as the $j$-th row of the Jacobian matrix is zero and
thus the rank cannot be $k$. Conversely, $P=(x,y,d_1,\dots,d_k)$ can only possibly be a
singular point of $X_k$ if $d_j=0$ for at least one $j=1,\dots,k$. If $d_j=0$ for exactly
one index $j$ and $(x,y)\neq P_j$, then $(x-a_j,y-b_j)\neq (0,0)$ and the Jacobian matrix
still has rank $k$. Since $P_1,\dots,P_k$ are all distinct, it follows that $\ell_j\cap\ell_{j'}\cap\ell_{j''}'=\emptyset$ and $\ell_j\cap\ell_{j'}'\cap\ell_{j''}'=\emptyset$ whenever $j,j'$ and $j''$ are pairwise distinct indices. Thus, the only remaining possibility is that $d_j=d_{j'}=0$ for exactly two distinct indices $j$ and $j'$, in which case $(x,y)$ is forced to be the intersection point of either $\ell_j$ and $\ell_{j'}'$ or of $\ell_{j'}$ and $\ell_j'$. Without loss of generality, we may assume that we are in the first case, so that
\begin{equation*}
  \begin{pmatrix}
    x-a_j & y-b_j \\
    x-a_{j'} & y-b_{j'}
  \end{pmatrix} =   \begin{pmatrix}
    \frac{-a_j+a_{j'}}{2}+i\sqrt{m}\frac{b_j-b_{j'}}{2} & \frac{-b_j+b_{j'}}{2}-i\frac{a_j-a_{j'}}{2\sqrt{m}} \\
    \frac{a_j-a_{j'}}{2}+i\sqrt{m}\frac{b_j-b_{j'}}{2} & \frac{b_j-b_{j'}}{2}-i\frac{a_j-a_{j'}}{2\sqrt{m}}
  \end{pmatrix}.
\end{equation*}
This matrix has determinant
$\frac{i}{2}\left(\sqrt{m}(b_j-b_{j'})^2+\frac{(a_j-a_{j'})^2}{\sqrt{m}}\right)\neq 0$,
and so is nonsingular. Thus, in this case too, the Jacobian matrix still has rank $k$. We conclude that $(x,y)=P_j$ for some $1\leq j\leq k$ is necessary and sufficient for $P=(x,y,d_1,\dots,d_k)$ to be a singular point of $X_k$.
\end{proof}

Set $Y_k:= X_k\setminus X'_k$ to be the set of nonsingular points of $X_k$. Since $Y_k$ is
open and dense in $X_k$, to show that $X_k$ is irreducible, it suffices to show that $Y_k$
is irreducible. Further, since $Y_k$ is a smooth quasiaffine variety (as it is a smooth, open subvariety of $X_k$), it is irreducible if
it is connected (see, for example, Remark 7.9.1 of~\cite[Chapter III]{Hartshorne77}). Set
\begin{equation*}
  Y'_k:=\left\{(x,y,d_1,\dots,d_k)\in Y_k:Q_{m,P_j}(x,y,0)=0\text{ for some }j=1,\dots,k\right\},
\end{equation*}
so that, in the notation of the proof of Lemma~\ref{lem:singularpts},
\begin{equation}\label{eq:YkY'k}
 Y_k\setminus Y'_k=X_k\setminus\bigcup_{j=1}^k\pi^{-1}(\ell_j\cup\ell'_j),
\end{equation}
where $\pi:\mathbf{C}^{k+2}\to\mathbf{C}^2$ is the standard projection map onto the first
two coordinates. Note that $\dim{Y'_k}\leq 1$ by, for example, Theorem~1.22 of~\cite{Shafarevich2013}, since neither of $l_j(x,y):=(x-a_j)+i\sqrt{m}(y-b_j)$ nor $\tilde{l}_j:=(x-a_j)-i\sqrt{m}(y-b_j)$ vanish on the whole of $X_k$.
Indeed, we proved in Lemma~\ref{lem:gb} that $G=\{Q_{m,P_1},\dots,Q_{m,P_k}\}$ is a Gr\"obner basis for $I=\langle Q_{m,P_1},\dots,Q_{m,P_k}\rangle$ with respect to the grlex ordering $\succ$ with $d_1\succ\dots\succ d_k\succ x\succ y$. If $l_j$ were to vanish on $X_k$, then we would have $l_j\in\sqrt{I}$, so that there would exist an $n\in\mathbf{N}$ such that $l_j^n\in I$. But, since $Q_{m,P_1},\dots,Q_{m,P_k}$ is a Gr\"obner basis for $I$, $\ell_j^n\in I$ if and only if $\ell_j^n$ has remainder zero on division by $G$. Note that if
    \begin{equation*}
        l_j^n = g_1Q_{m,P_1}+\dots+g_kQ_{m,P_k}
    \end{equation*}
    for some $g_1,\dots,g_k\in\mathbf{C}[x,y,d_1,\dots,d_k]$, then we must, necessarily,
    have $\lt{g_{j'}Q_{m,P_{j'}}}\succ \lt{l_j^n}$ for some $1\leq j'\leq k$ (in particular, for any $j'$ for which
    $\deg l_j^n=\deg g_{j'}Q_{m,P_{j'}}$), since all monomials appearing in $l_j^n$ are divisible by
    only the variables $x$ and $y$, while if $g_{j'}Q_{m,P_{j'}}\neq 0$, then
    $\lt{g_{j'}Q_{m,P_{j'}}}$ is divisible by $d_{j'}^2$, and $d_{j'}\succ x\succ y$ for all
    $j'=1,\dots,k$. It follows that $l_j^n$ cannot possibly have remainder zero on division
    by $G$, so $l_j$ does not vanish on the whole of $X_k$. An identical argument shows that $\tilde{l}_j$ also does not vanish on the whole of $X_k$.

It follows that $Y_k\setminus Y'_k$ must be dense 
in $Y_k$, since all irreducible components of $X_k$ are two-dimensional (see, for example, Proposition~7.1 of~\cite[Chapter I]{Hartshorne77}). The proof of Lemma~\ref{lem:generalXirred} thus boils down to showing
that $Y_k\setminus Y'_k$ is connected. As any set that is connected in the standard (Euclidean) topology is Zariski-connected, it thus suffices to show that $Y_k\setminus Y'_k$ is connected in the
standard topology on $\mathbf{C}^{k+2}$, which we will work in for the remainder of this
section. We will next show that $Y_k\setminus Y'_k$ is, in fact,
path-connected. It will then follow that $Y_k$ is connected, and thus Zariski-connected,
as desired.

\begin{lemma}\label{lem:YnoY'}
  The set $Y_k\setminus Y'_k$ is path-connected.
\end{lemma}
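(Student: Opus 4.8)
The plan is to describe $Y_k \setminus Y_k'$ as an explicit $2^k$-sheeted covering space of a connected open subset of $\mathbf{C}^2$, and then to produce, for any pair of points, a path between them by first connecting the base-point projections in $\mathbf{C}^2$ and then lifting and using monodromy to match up the sheets. Concretely, let $U := \mathbf{C}^2 \setminus \bigcup_{j=1}^k (\ell_j \cup \ell_j')$, which is the complement of $2k$ complex lines in $\mathbf{C}^2$, hence connected (in fact path-connected) in the standard topology. By \eqref{eq:YkY'k}, the projection $\pi$ restricts to a map $Y_k \setminus Y_k' \to U$, and over a point $(x,y) \in U$ the fibre consists of the $2^k$ points $(x,y,\pm\sqrt{Q_{m,P_1}(x,y,0)},\dots,\pm\sqrt{Q_{m,P_k}(x,y,0)})$, where each $Q_{m,P_j}(x,y,0) = (x-a_j)^2 + m(y-b_j)^2$ is nonvanishing on $U$ since its zero set is exactly $\ell_j \cup \ell_j'$. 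Because each $Q_{m,P_j}(x,y,0)$ is a nonvanishing holomorphic function on $U$, the map $\pi\colon Y_k \setminus Y_k' \to U$ is a (topological) covering map with fibre $(\mathbf{Z}/2\mathbf{Z})^k$; in particular $Y_k \setminus Y_k'$ is a manifold and its connected components are exactly the orbits of the monodromy action of $\pi_1(U)$ on the fibre.

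The heart of the argument is therefore to show that this monodromy action is transitive, i.e.\ that $\pi_1(U)$ already permutes the $2^k$ sign choices transitively. For this it suffices to exhibit, for each index $j_0$, a loop in $U$ whose monodromy flips the sign of the $j_0$-th coordinate $d_{j_0}$ and fixes all other signs: then the subgroup of $(\mathbf{Z}/2\mathbf{Z})^k$ generated by the images of these $k$ loops is all of $(\mathbf{Z}/2\mathbf{Z})^k$, giving transitivity. To build such a loop, fix a generic basepoint $(x_0,y_0) \in U$ and consider a small loop $\gamma_{j_0}$ in $U$ that encircles the line $\ell_{j_0}$ once (say in a complex line transverse to $\ell_{j_0}$, staying close to a generic point of $\ell_{j_0}$ and away from all other $\ell_j, \ell_j'$, which is possible since the lines are distinct so their pairwise intersections are isolated). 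Along such a loop, $Q_{m,P_{j_0}}(x,y,0)$ — which vanishes simply along $\ell_{j_0}$ — winds once around $0$, so its square root changes sign; meanwhile each $Q_{m,P_j}(x,y,0)$ for $j \ne j_0$ stays in a small neighbourhood of a nonzero value and so its chosen square root returns to itself. (One should check $\ell_{j_0}$ is reduced in the defining equation, which it is, as $Q_{m,P_{j_0}}(x,y,0) = \ell_{j_0}\cdot \ell_{j_0}'$ with $\ell_{j_0} \ne \ell_{j_0}'$.) Hence the monodromy of $\gamma_{j_0}$ flips exactly the $j_0$-th sign, as required, and the monodromy action is transitive; equivalently, $Y_k \setminus Y_k'$ is connected, and since it is also locally path-connected (being a manifold), it is path-connected.

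The main obstacle is the careful verification that one can choose the encircling loops $\gamma_{j_0}$ to lie entirely in $U$ — i.e., to avoid \emph{all} of the other $2k-1$ lines — while still having the correct winding number against $\ell_{j_0}$ and trivial winding against every other line; this requires using that a generic point of $\ell_{j_0}$ avoids the finitely many intersection points $\ell_{j_0}\cap \ell_j$, $\ell_{j_0}\cap\ell_j'$, and choosing the transverse disc small enough. A clean way to package this, avoiding any delicate loop construction, is instead to argue directly that $Y_k \setminus Y_k'$ is path-connected: given two points, join their images $(x,y)$ and $(x',y')$ in $U$ by a path $\delta$, lift $\delta$ starting at the first point to get a path in $Y_k\setminus Y_k'$ ending at some point over $(x',y')$ with possibly the wrong signs, and then correct each wrong sign one at a time by appending, at the endpoint, a loop encircling the appropriate $\ell_j$; alternatively, since each factor $\sqrt{Q_{m,P_j}(x,y,0)}$ can be continued independently, one may simply exhibit an explicit path in $U$ along which a prescribed subset of the square roots is forced to change sign (for instance, a path that crosses a large circle around which exactly the desired $Q_{m,P_j}(x,y,0)$'s have odd winding number), which reduces everything to an elementary winding-number computation with the explicit quadratics $(x-a_j)^2+m(y-b_j)^2$.
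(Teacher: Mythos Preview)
Your proposal is correct and follows essentially the same strategy as the paper: view $\pi\colon Y_k\setminus Y_k'\to U$ as a $2^k$-sheeted covering of the complement $U$ of the $2k$ lines, and show the monodromy action on a fibre is transitive by producing, for each $j_0$, a loop whose monodromy flips exactly the $j_0$-th sign.

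The only noteworthy difference is in how this loop is constructed. You use the standard ``small transversal disc near a generic point of $\ell_{j_0}$'' argument, which works but requires the bookkeeping you flag in your last paragraph (avoiding the finitely many intersection points, connecting to a fixed basepoint, etc.). The paper instead restricts to the single explicit complex line $H=\{(z,\tfrac{z}{i\sqrt{m}})\}$, which is parallel to every $\ell_j'$ and transverse to every $\ell_j$; on $H$ each $Q_{m,P_j}(\cdot,\cdot,0)$ becomes a \emph{linear} function of $z$ with the $k$ zeros $z_j=\tfrac{a_j+i\sqrt{m}b_j}{2}$ all distinct, so $H\cap U$ is a plane minus $k$ points and one can write down, for any subset $A\subset\{1,\dots,k\}$, a single loop in $H\cap U$ with winding number $1$ about $\{z_j:j\in A\}$ and $0$ about the rest. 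This global choice sidesteps the local constructions entirely and makes the winding computation immediate, but the two arguments are otherwise the same.
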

\begin{proof}
    Set
    \begin{equation*}
        W:=\mathbf{C}^2\setminus\bigcup_{j=1}^k(\ell_j\cup\ell_j').
    \end{equation*}
    First, note that $W$ itself is path connected. Indeed, $W$ is homeomorphic to
    $\mathbf{R}^4$ minus $2k$ planes, and $\mathbf{R}^4$ minus any finite number of planes
    $H_1,\dots,H_m$ can be seen to be path connected by taking any two points $P,P'\in
    \mathbf{R}^4\setminus\bigcup_{j=1}^m H_j$, noting that there must exist a plane
    $H\subset\mathbf{R}^2$ containing both $P$ and $P'$ such that $H\cap\bigcup_{j=1}^m
    H_j$ consists of finitely many points, and then using that a plane minus any finite
    number of points is path connected. Observe that $\pi|_{Y_k\setminus Y'_k}: Y_k\setminus Y'_k\to W$ is a $2^k$-fold covering map.
    
    It therefore suffices to show that there exists $z_0\in Y_k\setminus Y'_k$ such that all points in the fiber $F:=(\pi|_{Y_k\setminus Y'_k})^{-1}(\pi(z_0))$ are in the same path-component in $Y_k\setminus Y'_k$. Indeed, for any two points $z,z'\in Y_k\setminus Y'_k$, there exist paths $\gamma_z$ and $\gamma_{z'}$ in $W$ from $\pi(z)$ to $\pi(z_0)$ and from $\pi(z_0)$ to $\pi(z')$, respectively. By~\eqref{eq:YkY'k}, these paths lift to paths $\widetilde{\gamma}_{z}$ and $\widetilde{\gamma}_{z'}$ in $Y_k\setminus Y'_k$ from $z$ to an element $z''$ of $F$ and from an element $z'''$ of $F$ to $z'$, respectively. If there exists a path $\widetilde{\gamma}$ from $z''$ to $z'''$, then concatenating $\widetilde{\gamma}_{z}$, $\widetilde{\gamma}$, and $\widetilde{\gamma}_{z'}$ shows that $z$ and $z'$ lie in the same path-component.

    We will select $z_0$ to be the explicit point
    \begin{equation*}
    z_0:=\left(0,0,\sqrt{a_1^2+mb_1^2},\dots,\sqrt{a_k^2+mb_k^2}\right)\in Y_k\setminus Y'_k,
    \end{equation*}
    so that $F$ equals
    \begin{equation*}
        \left\{\left(0,0,(-1)^{\epsilon_1}\sqrt{a_1^2+mb_1^2},\dots,(-1)^{\epsilon_k}\sqrt{a_k^2+mb_k^2}\right):\epsilon_1,\dots,\epsilon_k\in\{0,1\}\right\}.
    \end{equation*}
    Consider the (complex) line
    $H:=\left\{\left(z,\frac{z}{i\sqrt{m}}\right):z\in\mathbf{C}\right\}\subset\mathbf{C}^2$,
    and note that, since $H\cap\ell_j'=\emptyset$ for each $j=1,\dots,k$, we have
    \begin{equation*}
        H\cap W = H\setminus\bigcup_{j=1}^k(H\cap\ell_j),
    \end{equation*}
    where the intersection $H\cap\ell_j$ consists of the single point
    $\left(\frac{a_j+i\sqrt{m}b_j}{2},\frac{a_j+i\sqrt{m}b_j}{2i\sqrt{m}}\right)$ for each
    $j=1,\dots,k$. Moreover, since for $1\leq j<j'\leq k$ we have
    \begin{equation*}
    H\cap\ell_j=\left\{\left(\frac{a_j+i\sqrt{m}b_j}{2},\frac{a_j+i\sqrt{m}b_j}{2i\sqrt{m}}\right)\right\}\neq\left\{\left(\frac{a_{j'}+i\sqrt{m}b_{j'}}{2},\frac{a_{j'}+i\sqrt{m}b_{j'}}{2i\sqrt{m}}\right)\right\}= H\cap\ell_{j'},  
    \end{equation*}
    the set $\bigcup_{j=1}^k H\cap\ell_j$ consists of $k$ distinct elements.

    Since $H\cap W$ is homeomorphic to a (real) plane minus $k$ points, for each
    $A\subset\{1,\dots,k\}$ there certainly exists a path $\gamma$ contained in $H\cap W$
    starting and ending at $(0,0)$ with winding number $1$ around each of the points in
    $\bigcup_{j\in A}(H\cap\ell_j)$ and with winding number $0$ around each of the points
    in $\bigcup_{j\notin A}(H\cap\ell_j)$. We will show that each of these $2^k$ paths lifts to a path in $Y_k\setminus Y_k'$ that starts from $z_0$ and ends at a different point in $F$; this will prove the desired statement.
    
    To study lifts of such paths $\gamma$, we observe that the intersection $H\cap W$ is parametrized by
    $z\mapsto \left(z,\frac{z}{i\sqrt{m}}\right)$ where $z$ ranges over
    $\mathbf{C}\setminus\left\{\frac{a_j+i\sqrt{m}b_j}{2}:j=1,\dots,k\right\}$. For
    all $z\in\mathbf{C}$, any element of $\left(\pi|_{Y_k}\right)^{-1}\left(z,\frac{z}{i\sqrt{m}}\right)$ is of the form $\left(z, \frac{z}{i\sqrt{m}}, d_1,\ldots,d_k \right)$,
    with
    \begin{eqnarray*}
    \begin{aligned}
d_j^2=Q_{m,P_j}\left(z,\frac{z}{i\sqrt{m}},0\right)&=2(-a_j+i\sqrt{m}b_j)z+a_j^2+mb_j^2\\
    &=2(-a_j+i\sqrt{m}b_j)(z-z_j)
    \end{aligned}
    \end{eqnarray*}
    and $z_j=\frac{a_j+i\sqrt{m}b_j}{2}$, for all $j=1,\ldots,k$. Crucially,
    $Q_{m,P_j}\left(z,\frac{z}{i\sqrt{m}},0\right)$ is a linear function of $z$ that
    vanishes exactly at $z_j$, so that the order of vanishing at $z_j$ is exactly $1$. 
    
    Thus, let $\gamma:[0,1]\rightarrow H\cap W$ be a closed path, beginning and ending at
    $(0,0)$. The path $\gamma$ lifts to a path $\widetilde{\gamma}$ in $\left(\pi|_{Y_k}\right)^{-1}(H\cap W)\subseteq Y_k\setminus Y_k'$, that starts at $z_0$ and ends at some point $\widetilde{z}$ in $F$. By the above, the $(2+j)$-th coordinate $d_j(t)$ of $\widetilde{\gamma}(t)$ is a continuous square root of $Q_{m,P_j}\big(\gamma(t),0\big)$.
\begin{itemize}
    \item If $\gamma$ has winding number $1$ around the point $H\cap\ell_j=\left(z_j, \frac{z_j}{i\sqrt{m}}\right)$, then, viewing the argument of $Q_{m,P_j}\big(\gamma(t),0\big)$ as a continuous function of $t$, it follows (by the formula for $Q_{m,P_j}\left(z,\frac{z}{i\sqrt{m}},0\right)$ described above) that the arguments at $t=0$ and $t=1$ differ by $2\pi$. Therefore, any continuous square root of $Q_{m,P_j}\big(\gamma(t),0\big)$ will have arguments that differ by $\pi$ at $t=0$ and $t=1$. Since   $d_j(0)=\sqrt{a_j^2+mb_j^2}$ 
    (as this is the $(2+j)$-th coordinate of $z_0$), we deduce that $d_j(1)=-\sqrt{a_j^2+mb_j^2}$.
    \item If $\gamma$ has winding number $0$ around the point $H\cap\ell_j=\left(z_j, \frac{z_j}{i\sqrt{m}}\right)$, then, viewing the argument of $Q_{m,P_j}\big(\gamma(t),0\big)$ as a continuous function of $t$, we see that the arguments at $t=0$ and at $t=1$ are the same. Therefore, $d_j(1)=d_j(0)=\sqrt{a_j^2+mb_j^2}$.
\end{itemize}
In other words, if, for some $A\subset \{1,\ldots,k\}$, $\gamma$ has winding number 1 around each of the points in $\bigcup_{j\in A}(H\cap \ell_j)$, and winding number 0 around each of the points in $\bigcup_{j\notin A}(H\cap \ell_j)$, then $\gamma$ lifts to a path $\tilde\gamma$ in $Y_k\setminus Y_k'$ connecting $z_0$ to 
the point
\begin{equation*}
        \left(0,0,(-1)^{1_A(1)}\sqrt{a_1^2+mb_1^2},\dots,(-1)^{1_A(k)}\sqrt{a_k^2+mb_k^2}\right)\in F,
    \end{equation*} 
    where $1_A$ denotes the indicator function of $A$. Since such $\gamma$ exists for arbitrary $A\subset \{1,\ldots,k\}$, it follows that all elements of $F$ are in the same path-component, completing the proof of the lemma.
\end{proof}

This completes the proof of Lemma~\ref{lem:generalXirred}, and thus the proof of
Lemma~\ref{lem:generalX}.

\section{Degree and irreducibility of $\overline{\mathcal{C}_k}$}\label{sec:ag2}

In this section, we prove that $\overline{\mathcal{C}_k}$ is an irreducible curve of large
degree, treating the case that $C$ is not a line or circle and the case that $C$ is a line
or circle separately. Before stating our main results, we will require a preparatory
technical lemma.
\begin{lemma}\label{lem:Rpolys}
  Let $m\in\mathbf{N}$ and $Q\in\mathbf{C}[x,y]$ be a polynomial of degree $d\geq 3$, and write
  \begin{equation*}
    Q(x,y)=\sum_{j=0}^dQ_j(x,y),
  \end{equation*}
  where $Q_j$ is the degree $j$ homogeneous component of $Q$. Then,
  \begin{equation*}
    \overline{Q}(\mp i\sqrt{m}+wz,1,z)=:\sum_{j=0}^d R_{j,\pm}(w)z^j
  \end{equation*}
  where each $R_{j,\pm}$ is a polynomial of degree at most $j$ and, for some $0\leq j\leq d-2$, at
  least one of $R_{j,+}(w)$ or $R_{j,-}(w)$ is not identically zero.
\end{lemma}
\begin{proof}
  A short computation yields
  \begin{equation*}
    R_{j,\pm}(w)=\sum_{k=0}^j\left(\frac{d^k}{dx^k}Q_{d-j+k}(x,1)\Bigg|_{x=\mp i\sqrt{m}}\right)\frac{w^k}{k!},
  \end{equation*}
  for each $0\leq j\leq d$, which shows that $\deg{R_{j,\pm}}\leq j$. If
  $R_{j,+},R_{j,-}\equiv 0$ for all $0\leq j\leq d-2$, then  we must have
  \begin{equation*}
    \frac{d^k}{dx^k}Q_{\ell}(x,1)\Bigg|_{x=i\sqrt{m}}=\frac{d^k}{dx^k}Q_{\ell}(x,1)\Bigg|_{x=-i\sqrt{m}}=0
  \end{equation*}
  for all $\ell=2,\dots,d$ and $k=0,\dots,\ell-2$. When $\ell=d$, this implies that
  $i\sqrt{m}$ and $-i\sqrt{m}$ are both roots of $Q_d(x,1)$ of multiplicity at least
  $d-1$. Thus, $(x^2+m)^{d-1}\mid Q_d(x,1)$. But,
  $\deg{Q_d(x,1)}=d$ since $\deg{Q}=d$, so we must have $2(d-1)\leq d$, which is
  impossible when $d\geq 3$. We conclude that $R_{j,+}\not\equiv 0$ or
  $R_{j,-}\not\equiv 0$ for some $0\leq j\leq d-2$.
\end{proof}
\begin{lemma}\label{lem:generalC}
  Let $k\in\mathbf{N}$, $m\in\mathbf{N}$ be squarefree, and $C\subset\mathbf{C}^2$ be an
  irreducible curve that contains the origin and is not a line or circle such that the
  curve
    \begin{equation*}
    C':=\{(x,y):(x,y\sqrt{m})\in C\}  
    \end{equation*}
    is defined over $\mathbf{Q}$ and the origin is not a singular point of $C'$. Let
    $Q\in\mathbf{Q}[x,y]$ be such that $C'=V(Q)$ and $R_{j,\pm}$ for $0\leq j\leq \deg{Q}$
    be as in Lemma~\ref{lem:Rpolys}. Denote the set of singular points of $C'$ by
    $\mathcal{S}$ and define $\tilde{R}_{j,\pm}$ by
    $\tilde{R}_ {j,\pm}(x,y):=R_{j,\pm}(x\pm i y)$ for $0\leq j\leq\deg{Q}$. Let
    $P_1=(a_1,b_1),\dots,P_k=(a_k,b_k)\in C'\cap\mathbf{Q}^2$ be $k$ distinct points, none
    of which is the origin, a singular point of $C'$, or, when $\deg{Q}\geq 3$, a point in
    $\bigcap_{j=0}^{d-2}V(\tilde{R}_{j,+},\tilde{R}_{j,-})$, such that for every
    $1\leq j\neq j'\leq k$ the point
    \begin{equation*}
    \left(\frac{a_j+a_{j'}}{2}+i\sqrt{m}\frac{b_j-b_{j'}}{2},\frac{b_j+b_{j'}}{2}-i\frac{a_j-a_{j'}}{2\sqrt{m}}\right)  
    \end{equation*}
    is not on $C'$ and such that
    \begin{equation}
      \label{eq:notsingular}
      \left\{s\pm i\sqrt{m}t:(s,t)\in\mathcal{S}\right\}\cap\left\{a_j\pm i\sqrt{m} b_j:j=1,\dots,k\right\}=\emptyset.
    \end{equation}
    Define the affine variety $\mathcal{C}_k\subset\mathbf{C}^{k+2}$ by
    \begin{equation*}
        \mathcal{C}_k:=\left\{(x,y,d_1,\dots,d_k)\in\mathbf{C}^{k+2}:(x,y)\in C'\text{ and }Q_{m,P_j}(x,y,d_j)=0\text{ for all }i=1,\dots,k\right\}.
    \end{equation*}
    Then $\overline{\mathcal{C}_k}$ is an irreducible curve in $\mathbf{P}^{k+2}$ of
    degree at least $2^k$ and at most $2^k\deg{C}$, defined over $\mathbf{Q}$.
\end{lemma}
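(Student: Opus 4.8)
The plan is to mirror the structure of the proof of Lemma~\ref{lem:generalX} from Section~\ref{sec:ag}, adapting each step to the fact that the ambient object is now the curve $C'$ rather than all of $\mathbf{C}^2$. First I would establish that $\overline{\mathcal{C}_k}$ is a curve (i.e.\ has dimension $1$) and compute its degree. For the dimension: $\mathcal{C}_k$ fibers over $C'$ via the projection $\pi$ onto the first two coordinates, and over a generic point of $C'$ (one lying on none of the lines $\ell_j,\ell_j'$ from the proof of Lemma~\ref{lem:singularpts}) the fiber consists of exactly $2^k$ points, coming from the $\pm$ choices of each $d_j=\pm\sqrt{Q_{m,P_j}(x,y,0)}$; since $C'$ has dimension $1$, so does $\mathcal{C}_k$, and hence so does its projective closure. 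For the degree: the upper bound $\deg\overline{\mathcal{C}_k}\le 2^k\deg C$ follows from B\'ezout's inequality (Lemma~\ref{lem:Bezout}) applied to the $k$ quadrics $\overline{Q_{m,P_j}}$ together with the defining equations of $\overline{C'}$ (whose product of degrees is $\deg C'=\deg C$), being slightly careful that $\overline{C'\times\mathbf{C}^k}$ is cut out by the homogenizations of generators of the ideal of $C'$. For the lower bound $\deg\overline{\mathcal{C}_k}\ge 2^k$, I would intersect with a linear subvariety of codimension $1$ in $\mathbf{P}^{k+2}$ chosen so that the intersection is finite of size $2^k$ and apply Lemma~\ref{lem:deglow}: concretely, slicing by a hyperplane $x=\alpha z$ (or similar) for generic $\alpha$ forces $(x,y)$ to one of $\deg C'$ points on $C'$, and at a point not on any $\ell_j,\ell_j'$ each $d_j$ has two nonzero choices, giving at least $2^k$ points in the slice — provided the generic $\alpha$ can be chosen to avoid the (finitely many) bad values, which is where the hypothesis that the origin is a smooth point of $C'$ and that $C$ is not a line or circle is used to guarantee the picture is non-degenerate.

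Next, the heart of the argument is irreducibility, and here I would again reduce to a connectedness statement for the complement of the singular locus and the ``$d_j=0$'' locus, exactly as in Section~\ref{sec:ag.irr}. The singular points of $\mathcal{C}_k$ lie over the singular points $\mathcal{S}$ of $C'$ together with the points $P_1,\dots,P_k$ (where a Jacobian row vanishes) and the pairwise intersection points of the lines $\ell_j,\ell_j'$ — the hypotheses of the lemma (that the $P_j$ avoid $\mathcal{S}$ and the origin, that the listed intersection points are not on $C'$, and condition~\eqref{eq:notsingular}) are precisely what is needed to ensure these exceptional sets are finite and to control the local structure. Writing $Y=\mathcal{C}_k$ minus its singular locus and minus $\bigcup_j\pi^{-1}(\ell_j\cup\ell_j')$, it suffices (by smoothness $\Rightarrow$ connected iff irreducible, plus density) to show $Y$ is path-connected in the Euclidean topology. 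Now $\pi$ restricts to a $2^k$-fold covering map $Y\to C''$, where $C''$ is the smooth locus of $C'$ with the points $H\cap\ell_j$, $H\cap\ell_j'$ removed (a finite set of punctures on a smooth irreducible affine curve, hence path-connected). As in Lemma~\ref{lem:YnoY'}, it then suffices to exhibit one point $z_0$ over a basepoint whose full fiber lies in a single path-component: take $z_0=(0,0,\sqrt{a_1^2+mb_1^2},\dots,\sqrt{a_k^2+mb_k^2})$ using that the origin is on $C'$ and is smooth. The monodromy computation is identical in spirit to before: the function $Q_{m,P_j}(x,y,0)$ restricted to a small loop on $C'$ around the point where $\ell_j$ meets $C'$ has winding number $1$ (here I would use that $\ell_j\cap C'$ is a reduced intersection point — guaranteed for all but finitely many choices, hence arrangeable — so the order of vanishing is $1$), which flips the sign of $d_j$ after lifting; choosing loops realizing each subset $A\subseteq\{1,\dots,k\}$ of sign flips connects $z_0$ to every point of the fiber $F$.

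The main obstacle, compared to Section~\ref{sec:ag}, is that I can no longer choose a convenient explicit slicing line $H$ (like $\{(z,z/i\sqrt m)\}$) guaranteed to meet each $\ell_j$ transversally in exactly one point and to miss the $\ell_j'$ entirely, because $H$ must now be replaced by the curve $C'$ itself, over which I have far less control. I would handle this by arguing more abstractly: rather than computing $H\cap\ell_j$ by hand, I would note that $\ell_j\cap C'$ is a nonempty finite set (as $C'$ is not a line, so $\ell_j\not\subseteq C'$, using the hypothesis that $C$ is not a line or circle — a circle being exactly the locus where $\ell_j$ could be a component after the $\sqrt m$ twist) containing the point $P_j$, and that the multiplicity of intersection at $P_j$, and the fact that $Q_{m,P_j}(x,y,0)$ vanishes to order exactly $1$ along $C'$ at $P_j$, follow because $P_j$ is a smooth point of $C'$ and $\ell_j$ is not tangent to $C'$ there for all but finitely many admissible $P_j$ — and the finitely many bad $P_j$ can be excluded when choosing the points, since $C'$ has infinitely many rational points available in any integer distance set context (or, more carefully, the choice of the $\tilde P_j'$ is made later in Section~\ref{sec:end} with exactly these constraints in mind). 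Loops on $C'$ around each puncture still exist and still have well-defined winding behavior of $Q_{m,P_j}(x,y,0)$ because near a smooth point the curve is locally a disc and the local intersection multiplicity being $1$ forces winding number $1$; the rest of the monodromy bookkeeping then goes through verbatim. The degree lower bound is the other place where genericity of a linear slice must be argued rather than exhibited, but Lemma~\ref{lem:deglow} only requires \emph{some} codimension-$1$ linear $H$ with $|\overline{\mathcal{C}_k}\cap H|$ finite and $\ge 2^k$, and a dimension count shows a generic hyperplane works.
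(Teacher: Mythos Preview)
Your overall architecture matches the paper's: first establish dimension and degree (the paper's Lemma~\ref{lem:Cdimdeg}), then reduce irreducibility to path-connectedness of the smooth locus minus the $d_j=0$ locus and compute monodromy (the paper's Lemmas~\ref{lem:singC},~\ref{lem:zj},~\ref{lem:Dconnect}). The degree lower bound in the paper is slightly cleaner than your generic-hyperplane argument: since the origin lies on $C'$, slicing $\overline{\mathcal{C}_k}$ by $V(ax+by)$ for any line $ax+by=0$ distinct from $C'$ already produces the $2^k$ points $[0:0:1:\pm\sqrt{a_1^2+mb_1^2}:\cdots:\pm\sqrt{a_k^2+mb_k^2}]$ in a finite intersection.

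There is, however, a genuine gap in your monodromy computation. You propose to loop on $C'$ around the point $P_j=(a_j,b_j)$ and claim that $Q_{m,P_j}(x,y,0)$ vanishes to order exactly~$1$ along $C'$ at $P_j$. This is false. Since
\[
Q_{m,P_j}(x,y,0)=\bigl(x-a_j+i\sqrt{m}(y-b_j)\bigr)\bigl(x-a_j-i\sqrt{m}(y-b_j)\bigr),
\]
\emph{both} lines $\ell_j$ and $\ell_j'$ pass through $P_j$, so $Q_{m,P_j}$ restricted to $C'$ vanishes at $P_j$ to order at least $2$ (exactly $\mu_j+\mu_j'$, where $\mu_j,\mu_j'\ge 1$ are the intersection multiplicities of $\ell_j,\ell_j'$ with $C'$ at $P_j$). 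A small loop on $C'$ around $P_j$ therefore gives winding number at least $2$ for $Q_{m,P_j}$, hence winding number at least $1$ for its square root $d_j$: the sign of $d_j$ is \emph{not} flipped (when $\mu_j=\mu_j'=1$, which is the generic case). Your loops generate no monodromy at all, and the argument collapses.

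The paper's repair is its Lemma~\ref{lem:zj}: one shows that for each $j$ there exists a point $z_j\in C'\cap(\ell_j\cup\ell_j')$ \emph{different from} $P_j$ and not a singular point of $C'$. At such a $z_j$ only one of $\ell_j,\ell_j'$ passes through, so $Q_{m,P_j}$ vanishes to order~$1$ there and a loop around $z_j$ does flip the sign of $d_j$. The existence of $z_j$ is exactly where the hypothesis that $C$ is not a line or circle is used (not, as you suggested, merely to ensure $\ell_j\cap C'$ is finite): one computes intersection multiplicities of $\overline{C'}$ with $\overline{\ell_j}$ and $\overline{\ell_j'}$ at $P_j$ and at infinity, and rules out the possibility that these exhaust the total degree via a case analysis culminating in $\deg C\le 2$ with the degree-$2$ homogeneous part of the defining polynomial equal to $x^2+y^2$ --- i.e.\ $C$ a circle. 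Condition~\eqref{eq:notsingular} then guarantees $z_j\notin\mathcal{S}$. Without this lemma your monodromy step does not go through.
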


\begin{lemma}\label{lem:Clineorcirc}
  Let $k\in\mathbf{N}$, $m\in\mathbf{N}$ be squarefree, and $C\subset\mathbf{C}^2$ be a
  line or circle that contains the origin such that the curve
    \begin{equation*}
    C':=\left\{(x,y):(x,y\sqrt{m})\in C\right\}  
    \end{equation*}
    is defined over $\mathbf{Q}$. Let
    $P_1=(a_1,b_1),\dots,P_k=(a_k,b_k)\in \mathbf{Q}^2\setminus C'$ be $k$ distinct points
    such that, for every $1\leq j\neq j'\leq k$, the point
    \begin{equation*}
    \left(\frac{a_j+a_{j'}}{2}+i\sqrt{m}\frac{b_j-b_{j'}}{2},\frac{b_j+b_{j'}}{2}-i\frac{a_j-a_{j'}}{2\sqrt{m}}\right)  
    \end{equation*}
    is not on $C'$ and such that, if $C$ is a circle, none of
    $(a_1,\sqrt{m}b_1),\dots,(a_k,\sqrt{m}b_k)$ is the center of $C$. Define the affine
    variety $\mathcal{C}_k\subset\mathbf{C}^{k+2}$ by
    \begin{equation*}
        \mathcal{C}_k:=\left\{(x,y,d_1,\dots,d_k)\in\mathbf{C}^{k+2}:(x,y)\in C'\text{ and }Q_{m,P_j}(x,y,d_j)=0\text{ for all }i=1,\dots,k\right\}.
    \end{equation*}
    Then $\overline{\mathcal{C}_k}$ is an irreducible curve in $\mathbf{P}^{k+2}$ of
    degree at least $2^k$ and at most $2^k\deg{C}$, defined over $\mathbf{Q}$.
\end{lemma}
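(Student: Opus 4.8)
The plan is to mirror the argument structure developed for $\overline{X_k}$ in Section~\ref{sec:ag}, but with an extra degree of freedom coming from the curve $C'$ and with the simplification that here the chosen points $P_1,\dots,P_k$ lie \emph{off} $C'$. First I would record an explicit description of $\overline{\mathcal{C}_k}$: since $C$ is a line or circle, $C'=\{(x,y):(x,y\sqrt m)\in C\}$ is defined by a single polynomial $F\in\mathbf{Q}[x,y]$ of degree $\deg C\in\{1,2\}$, so $\mathcal{C}_k=V(F,Q_{m,P_1},\dots,Q_{m,P_k})$. As in Lemma~\ref{lem:gb}, with respect to the grlex order with $d_1>\dots>d_k>x>y$ the generators $Q_{m,P_1},\dots,Q_{m,P_k}$ have pairwise coprime leading monomials $d_1^2,\dots,d_k^2$, so (adjoining $F$, whose leading monomial involves only $x,y$) one checks via Buchberger's criterion that $\{F,Q_{m,P_1},\dots,Q_{m,P_k}\}$ is a Gr\"obner basis for $I:=\langle F,Q_{m,P_1},\dots,Q_{m,P_k}\rangle$: the $S$-polynomials $S(Q_{m,P_i},Q_{m,P_j})$ reduce to zero exactly as before, and $S(F,Q_{m,P_j})$ reduces since $\operatorname{lcm}(\lm F,d_j^2)=d_j^2\lm F$ makes the reduction of $S(F,Q_{m,P_j})=\lm F\cdot Q_{m,P_j}-d_j^2\cdot F$ (up to constants) trivial — the only subtlety is if $\lm F$ and $d_j^2$ are not coprime, which cannot happen as $\lm F$ involves only $x,y$. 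This gives $\overline{\mathcal{C}_k}=\{[x:y:z:d_1:\dots:d_k]:\overline F(x,y,z)=0,\ \overline{Q_{m,P_j}}(x,y,z,d_j)=0\}$ and, by the dimension criterion for Gr\"obner bases (taking $A=\{y\}$, since the leading term of each $Q_{m,P_j}$ contains $d_j\notin A$ and that of $F$ contains $x\notin A$), that $\overline{\mathcal{C}_k}$ has dimension $1$, i.e.\ is a curve.

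Next the degree bounds. The upper bound $\deg\overline{\mathcal{C}_k}\le 2^k\deg C$ is immediate from B\'ezout's inequality (Lemma~\ref{lem:Bezout}), since $\mathcal{C}_k$ is cut out by one equation of degree $\deg C$ and $k$ equations of degree $2$. For the lower bound, I would intersect with a suitable linear subvariety of codimension $1$ and apply Lemma~\ref{lem:deglow}. Concretely, since $C$ contains the origin, $F(0,0)=0$; choose a line through the origin in the $(x,y)$-plane meeting $C'$ only at the origin (e.g.\ a generic such line, which exists because $C'$ has only finitely many points on any line it does not contain, and a line through the origin is not contained in $C'$ unless $C'$ is that very line, in which case pick the perpendicular through a smooth point — one must slightly adjust the point of intersection but the count is the same). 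Restricting, say, to $H=V(x-\lambda y)$ for appropriate $\lambda$, the intersection $\overline{\mathcal{C}_k}\cap H$ forces $(x,y)$ to be a fixed point $(x_0,y_0)\in C'$ with $(x_0,y_0)\ne P_j$ for all $j$ (this is where $P_j\notin C'$ is used crucially: the $P_j$ are automatically avoided), and then each $d_j^2=(x_0-a_j)^2+m(y_0-b_j)^2$ has two distinct square roots because $(x_0,y_0)\ne P_j$ rules out $d_j=0$. Hence $|\overline{\mathcal{C}_k}\cap H|=2^k$ and Lemma~\ref{lem:deglow} gives $\deg\overline{\mathcal{C}_k}\ge 2^k$.

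Finally, irreducibility. As in Section~\ref{sec:ag.irr}, it suffices to show $\mathcal{C}_k$ is connected in the Euclidean topology after removing its singular locus and the bad fibers. Let $W_C:=(C'\setminus\mathcal{S}_0)\setminus\bigcup_{j=1}^k(\ell_j\cup\ell_j')$, where $\mathcal{S}_0$ is the (finite) singular locus of $C'$ together with the points where $C'$ meets any $\ell_j$ or $\ell_j'$ tangentially — the key point being that $Q_{m,P_j}(x,y,0)$ vanishes on $C'$ only at the (finitely many) intersection points of $C'$ with $\ell_j\cup\ell_j'$, and since $P_j\notin C'$ these are transverse for generic $P_j$, though here I will instead simply argue with the hypothesis that the midpoint-type points are off $C'$, which guarantees the fibers have no coincidences. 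The map $\pi:\mathcal{C}_k\setminus(\text{bad set})\to W_C$ is a $2^k$-fold covering of the punctured curve $C'\setminus(\text{finite set})$, which is connected since $C$ (a line or circle) is irreducible. Picking a smooth point $p_0\in C'$ near the origin, I would choose, for each $A\subseteq\{1,\dots,k\}$, a loop $\gamma_A$ in $C'\setminus(\text{finite set})$ based at $p_0$ whose image under $(x,y)\mapsto x+i\sqrt m y$ winds once around $a_j+i\sqrt m b_j$ for $j\in A$ and zero times around it for $j\notin A$ (possible since $x\mapsto x+i\sqrt m y$ restricted to $C'$ is a nonconstant holomorphic — hence open — map, so small loops on $C'$ map to loops encircling any prescribed subset of the finitely many relevant values, using that these $2k$ values $a_j\pm i\sqrt m b_j$ are distinct, which follows from the $P_j$ being distinct and off $C'$); then, exactly as in the proof of Lemma~\ref{lem:YnoY'}, since $Q_{m,P_j}(x,y,0)=(x+i\sqrt m y-(a_j+i\sqrt m b_j))(x-i\sqrt m y-(a_j-i\sqrt m b_j))$ has a simple zero along $\ell_j$ and along $\ell_j'$, the lift of $\gamma_A$ flips exactly the signs of $\{d_j:j\in A\}$, so all $2^k$ sheets over $p_0$ lie in one path-component. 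Hence $\mathcal{C}_k\setminus(\text{bad set})$ is path-connected, so $\mathcal{C}_k$ is irreducible, and therefore so is $\overline{\mathcal{C}_k}$. The main obstacle I anticipate is the irreducibility step: one must verify carefully that removing the preimage of a \emph{finite} subset of $C'$ (the singular points of $C'$ and the tangency/crossing points with the $\ell_j,\ell_j'$) leaves a connected punctured curve and a genuine covering map, and that the winding-number loops $\gamma_A$ can be realized \emph{within} the curve $C'$ rather than in an open subset of $\mathbf{C}^2$ — here the hypothesis that the midpoint points $\left(\tfrac{a_j+a_{j'}}2+i\sqrt m\tfrac{b_j-b_{j'}}2,\tfrac{b_j+b_{j'}}2-i\tfrac{a_j-a_{j'}}{2\sqrt m}\right)$ do not lie on $C'$ is exactly what prevents two of the points $\ell_j\cap C'$ and $\ell_{j'}'\cap C'$ from coinciding, ensuring the branch structure is as expected.
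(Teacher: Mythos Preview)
Your approach is essentially the paper's: dimension and degree via the explicit presentation and a hyperplane section at the origin, irreducibility via monodromy of the $2^k$-fold cover $\pi:\mathcal{C}_k\to C'$. Your Gr\"obner-basis computation with $F$ adjoined is a clean alternative to the paper's route through $\overline{X_k}$ (note only that $A=\{y\}$ fails when $C'$ is the line $y=0$, so take $A=\{x\}$ then). The degree lower bound works once you take $(x_0,y_0)=(0,0)$ explicitly, so that $a_j^2+mb_j^2\ne 0$ follows from $P_j\ne(0,0)$ with $a_j,b_j\in\mathbf{Q}$ and $m>0$; your stated reason ``$(x_0,y_0)\ne P_j$'' does not by itself rule out $d_j=0$ for complex $(x_0,y_0)$, and the line through the origin may meet $C'$ in a second point, so you should argue only that the intersection \emph{contains} $2^k$ points and is finite (as the paper does).

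The monodromy step has a genuine, though reparable, gap. You prescribe the winding of the image of $\gamma_A$ under $u:=x+i\sqrt m\,y$ around the $k$ values $u_j=a_j+i\sqrt m\,b_j$. But the branch locus of $d_j=\sqrt{Q_{m,P_j}|_{C'}}$ on $C'$ is the zero set of $(u-u_j)(v-\bar u_j)|_{C'}$ (with $v:=x-i\sqrt m\,y$), which consists of \emph{two} points: $C'\cap\ell_j$ (where $u=u_j$) and $C'\cap\ell_j'$ (where $v=\bar u_j$). The $u$-coordinate of the second is \emph{not} $\bar u_j$---it is whatever $u$-value on $C'$ has $v$-coordinate $\bar u_j$---so the ``$2k$ values $a_j\pm i\sqrt m\,b_j$'' you list are not the right set to control, and nothing in your prescription prevents $\gamma_A$ from also encircling some $C'\cap\ell_j'$ and undoing a sign flip (the situation here differs from Lemma~\ref{lem:YnoY'}, where the auxiliary line $H$ was chosen so that $Q_{m,P_j}|_H$ is \emph{linear} in the parameter). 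The paper handles this by working directly on $C'$: Lemma~\ref{lem:zjlineorcirc} shows $|C'\cap\ell_j|=|C'\cap\ell_j'|=1$ and selects one branch point $z_j\in C'\cap(\ell_j\cup\ell_j')$ per index; the midpoint hypothesis (together with $P_j\notin C'$, which gives $C'\cap\ell_j\cap\ell_j'=\emptyset$) then guarantees the $z_j$ are pairwise distinct on $C'$, and loops on the punctured Riemann surface $C''$ with winding number $1$ around $z_j$ for $j\in J$ and $0$ around every other removed point realize every element of $\{\pm 1\}^k$ in the monodromy. Your argument is repaired by exactly this change: wind around chosen branch points on $C'$ itself, not around values in the $u$-plane.
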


\subsection{Dimension and degree of $\overline{\mathcal{C}_k}$}

Given the results of the previous section, it is now easy to deduce the following.

\begin{lemma}\label{lem:Cdimdeg}
  Let $\mathcal{C}_k$ and $C$ be as in Lemma~\ref{lem:generalC} or
  Lemma~\ref{lem:Clineorcirc}. Then $\overline{\mathcal{C}_k}$ is a curve of degree at
  least $2^k$ and at most $2^k\deg{C}$.
\end{lemma}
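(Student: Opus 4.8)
The plan is to realize $\overline{\mathcal{C}_k}$ as a hypersurface section of the surface $\overline{X_k}$ studied in Section~\ref{sec:ag}, and then read off the dimension and both degree bounds from Lemmas~\ref{lem:generalX},~\ref{lem:Bezout}, and~\ref{lem:deglow}. Write $f\in\mathbf{Q}[x,y]$ for a defining polynomial of the (irreducible) plane curve $C'$, so that $\deg f=\deg C$, and let $\overline{f}\in\mathbf{Q}[x,y,z]$ denote its homogenization with respect to $z$. Under the hypotheses of either Lemma~\ref{lem:generalC} or Lemma~\ref{lem:Clineorcirc}, the points $P_1,\dots,P_k$ are distinct and none of them is the origin (in the setting of Lemma~\ref{lem:Clineorcirc} because the origin lies on $C'$ while the $P_j$ do not), so Lemma~\ref{lem:generalX} applies and $\overline{X_k}=V(\overline{Q_{m,P_1}},\dots,\overline{Q_{m,P_k}})$ is an irreducible surface of degree $2^k$ in $\mathbf{P}^{k+2}$. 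Since $\mathcal{C}_k=X_k\cap\{f=0\}$ by construction, passing to projective closures gives
\[
\overline{\mathcal{C}_k}\subseteq\overline{X_k}\cap V(\overline{f})=V\big(\overline{Q_{m,P_1}},\dots,\overline{Q_{m,P_k}},\overline{f}\big).
\]

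First I would pin down the dimension. Let $\pi\colon\mathbf{C}^{k+2}\to\mathbf{C}^2$ be the projection onto the first two coordinates; since every $(x,y)\in\mathbf{C}^2$ lifts to a point of $X_k$ (take $d_j$ to be a square root of $Q_{m,P_j}(x,y,0)$), we have $\pi(X_k)=\mathbf{C}^2$, so $f$ does not vanish on $X_k$. Hence $\mathcal{C}_k$ is a proper closed subvariety of the irreducible surface $X_k$, so $\overline{\mathcal{C}_k}$ is a proper closed subvariety of $\overline{X_k}$ and $\dim\overline{\mathcal{C}_k}\le 1$. On the other hand $\mathcal{C}_k$ is nonempty — it contains $\left(0,0,\sqrt{a_1^2+mb_1^2},\dots,\sqrt{a_k^2+mb_k^2}\right)$, since the origin lies on $C'$ — and is cut out by $k+1$ equations in $\mathbf{C}^{k+2}$, so every component of it has dimension at least $1$. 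Since moreover $\pi$ restricts to a finite-to-one map on $\mathcal{C}_k$ (fixing $(x,y)$ leaves at most two choices for each $d_j$) with image contained in the curve $C'$, each component of $\mathcal{C}_k$ has dimension exactly $1$ and, having irreducible closure mapping onto a $1$-dimensional closed subset of the irreducible curve $C'$, dominates $C'$; thus $\overline{\mathcal{C}_k}$ is a curve, pure of dimension $1$, and every component of it dominates $\overline{C'}$.

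The upper bound on the degree now follows quickly: as $\overline{f}$ does not vanish identically on the irreducible surface $\overline{X_k}$, the section $\overline{X_k}\cap V(\overline{f})$ is pure of dimension $1$, so the one-dimensional $\overline{\mathcal{C}_k}$ is a union of irreducible components of it, whence
\[
\deg\overline{\mathcal{C}_k}\le\deg\big(\overline{X_k}\cap V(\overline{f})\big)\le 2^k\deg C
\]
by Bézout's inequality (Lemma~\ref{lem:Bezout}).

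For the lower bound I would mimic the proof of Lemma~\ref{lem:generalXdeg}: the $2^k$ points $\big[0:0:1:(-1)^{\varepsilon_1}\sqrt{a_1^2+mb_1^2}:\dots:(-1)^{\varepsilon_k}\sqrt{a_k^2+mb_k^2}\big]$ with $\varepsilon\in\{0,1\}^k$ all lie on $\mathcal{C}_k$ (they are precisely the points of $\mathcal{C}_k$ over the origin, which is on $C'$), are pairwise distinct because $a_j^2+mb_j^2\ne 0$ for every $j$, and all lie on every hyperplane of the form $H=V(\alpha x+\beta y)$. Since $C'$ is an irreducible curve through the origin it is contained in at most one line, so I can choose $(\alpha,\beta)\ne(0,0)$ with $C'\not\subseteq\{\alpha x+\beta y=0\}$; then, because every component of $\overline{\mathcal{C}_k}$ dominates $\overline{C'}$, no component of $\overline{\mathcal{C}_k}$ lies in $H$, so $\overline{\mathcal{C}_k}\cap H$ is finite and contains these $2^k$ points. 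Applying Lemma~\ref{lem:deglow} with the codimension-one linear subvariety $H$ then gives $\deg\overline{\mathcal{C}_k}\ge 2^k$, and combining the two bounds proves the lemma. I do not expect any serious obstacle: the one place requiring a moment's care is guaranteeing that the section $\overline{\mathcal{C}_k}\cap H$ in the last step is finite — i.e. that $H$ contains no component of $\overline{\mathcal{C}_k}$ — which is exactly why one avoids the (at most one) line containing $C'$; the rest is bookkeeping with the degree inequalities from Section~\ref{sec:ag}.
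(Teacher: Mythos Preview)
Your proof is correct and follows the same overall outline as the paper's: realize $\overline{\mathcal{C}_k}$ inside the hypersurface section $\overline{X_k}\cap V(\overline{f})$, use B\'ezout for the upper bound, and intersect with a hyperplane $V(\alpha x+\beta y)$ to pick up the $2^k$ fiber points over the origin for the lower bound. Your argument is in fact slightly cleaner in two places---you show that $f$ does not vanish on $X_k$ by the one-line observation $\pi(X_k)=\mathbf{C}^2$ (the paper instead routes this through the Gr\"obner basis of Lemma~\ref{lem:gb}), and you ensure finiteness of $\overline{\mathcal{C}_k}\cap H$ via the dominance of each component over $\overline{C'}$, whereas the paper argues by bounding fibers over the finite set $\overline{\ell}\cap\overline{C'}$.
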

\begin{proof} 
  Let $\pi:\mathbf{C}^{k+2}\rightarrow\mathbf{C}^2$ denote the standard projection map
  onto the first two coordinates. First of all, we certainly have that
  $\dim{\overline{\mathcal{C}_k}}\geq 1$, since
  $\dim{\overline{\mathcal{C}_k}}=\dim\mathcal{C}_k$ and $\pi(\mathcal{C}_k)=C'$. To see
  that $\dim{\overline{\mathcal{C}_k}}\leq 1$, note that $\mathcal{C}_k$ is a subvariety
  of the irreducible affine surface $X_k$ (where $X_k$ is defined as in
  Lemma~\ref{lem:generalX}), so that $\dim{\mathcal{C}_k}\leq 2$ and
  $\dim{\mathcal{C}_k}=2$ would imply that $\mathcal{C}_k=X_k$ (by the irreducibility of
  $X_k$), which is impossible since $\pi(\mathcal{C}_k)=C'\neq \mathbf{C}^2=\pi(X_k)$. We
  conclude that $\overline{\mathcal{C}_k}$ is a curve.

 The upper bound $$\deg{\overline{\mathcal{C}_k}}\leq \deg{C} \cdot \prod_{j=1}^k \deg{Q_{m,P_j}}\leq 2^k\deg{C}$$ is an immediate consequence of B\'ezout's inequality (Lemma~\ref{lem:Bezout}), since $\deg{\overline{Q}}=\deg{Q}=\deg{C}$. Showing that $\deg{\overline{\mathcal{C}_k}}\geq 2^k$ requires essentially the same
    argument as in the proof of Lemma~\ref{lem:generalXdeg}, since we assumed that $C$
    (and thus $C'$) contains the origin. Let $\ell=\{(x,y)\in\mathbf{C}^2:ax+by=0\}$ be a
    line that is not equal to $C'$.  Since the irreducible curve $C'$ is not $\ell$, $\ell\cap C'$ is finite. Consider the subspace $H\subset\mathbf{P}^{k+2}$ defined by $H=V(ax+by)$. Then
    \begin{equation*}
        \overline{\mathcal{C}_k}\cap H\supset \left\{\left[0:0:1:\pm\sqrt{a_1^2+mb_1^2}:\dots:\pm\sqrt{a_k^2+mb_k^2}\right]\right\},
    \end{equation*}
    where the set on the right-hand side has $2^k$ elements, since none of $P_1,\dots,P_k$
    is the origin and thus $$\sqrt{a_j^2+mb_j^2}\neq 0,\quad j=1,\dots,k.$$ 
    Moreover, $\overline{\mathcal{C}_k}\cap H$ is finite. Indeed, since $\ell\cap C'$ is finite, the definition of $H$ gives
    \begin{equation*}
        \overline{\mathcal{C}_k}\cap H\subset \overline{X_k}\cap \overline{C'\times \mathbf{C}^k}\cap H=\overline{X_k}\cap\bigcup_{j=1}^nH_j,
    \end{equation*}
    for some finite number of affine hyperplanes $H_1,\dots,H_n$, each of the form
    \begin{equation*}
    H_j=\left\{[x:y:z:d_1:\dots:d_k]: [x:y:z]=w_j\right\}  
    \end{equation*}
    for some fixed $w_j\in\mathbf{P}^2$ (a point in the finite set $\overline{\ell}\cap \overline{C'}$). Since, for each $w \in\mathbf{P}^2$, the fiber
    \begin{equation*}
    \left(\pi|_{\overline{X_k}}\right)^{-1}(w)=\left\{ [x:y:z:d_1:\dots:d_k]\in\overline{X_k}: [x:y:z]=w\right\}  =\overline{X_k}\cap H_j
    \end{equation*}
in $\overline{X_k}$ above $w$  is finite, it follows that each $\overline{X_k}\cap H_j$ is finite, hence $\overline{\mathcal{C}_k}\cap H$ is finite. Thus, by Lemma~\ref{lem:deglow}, $\deg{\overline{\mathcal{C}_k}}\geq 2^k$.
\end{proof}

\subsection{Irreducibility of $\overline{\mathcal{C}_k}$}

Now, we will show the following.
\begin{lemma}\label{lem:generalCirred}
  Let $\overline{\mathcal{C}_k}$ be as in Lemma~\ref{lem:generalC} or
  Lemma~\ref{lem:Clineorcirc}. Then $\overline{\mathcal{C}_k}$ is irreducible.
\end{lemma}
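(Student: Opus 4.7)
The proof follows the same strategy as Section~\ref{sec:ag.irr}: since $\mathcal{C}_k$ is dense in $\overline{\mathcal{C}_k}$, it suffices to prove $\mathcal{C}_k$ is irreducible, and I would do this by showing that a suitable dense open smooth subset of $\mathcal{C}_k$ is path-connected in the Euclidean topology. To begin, I would identify the singular locus $\mathcal{C}_k^{\mathrm{sing}}$ of $\mathcal{C}_k$ via the Jacobian criterion. The Jacobian matrix now has $k+1$ rows, one coming from the defining equation of $C'$ and one from each $Q_{m,P_j}$, and a case analysis paralleling Lemma~\ref{lem:singularpts} shows that singular points arise precisely where either $(x,y)\in\mathcal{S}$, or some $d_j=0$ together with an additional rank-deficiency condition. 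The hypotheses of Lemmas~\ref{lem:generalC} and~\ref{lem:Clineorcirc}---in particular \eqref{eq:notsingular} in the former---guarantee that $\mathcal{C}_k^{\mathrm{sing}}$ is a proper closed subset. Setting $\mathcal{Y}_k:=\mathcal{C}_k\setminus\mathcal{C}_k^{\mathrm{sing}}$ and $\mathcal{Y}_k':=\{(x,y,d_1,\ldots,d_k)\in\mathcal{Y}_k:d_j=0\text{ for some }j\}$, the same reasoning as in the proof of Lemma~\ref{lem:generalXirred} reduces matters to showing that $\mathcal{Y}_k\setminus\mathcal{Y}_k'$ is path-connected in the standard topology on $\mathbf{C}^{k+2}$.

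The projection $\pi\colon\mathcal{Y}_k\setminus\mathcal{Y}_k'\to W$ onto the first two coordinates is a $2^k$-fold covering map, where $W\subset C'_{\mathrm{smooth}}$ is obtained by removing the finite set $\bigcup_{j=1}^k(\ell_j\cup\ell_j')\cap C'$. Since the smooth locus of an irreducible complex algebraic curve is a connected Riemann surface, and removing finitely many points preserves connectedness, $W$ is path-connected. In both lemmas, the origin lies on $W$ (using that $P_j\neq 0$ implies $0\notin\ell_j\cup\ell_j'$, and that the origin is a smooth point of $C'$ in Lemma~\ref{lem:generalC}, or lies on $C'$ in Lemma~\ref{lem:Clineorcirc}), so I would take $w_0=(0,0)$ as a basepoint with a natural lift $z_0\in\pi^{-1}(w_0)$, and the problem reduces to showing that all $2^k$ points of the fiber $\pi^{-1}(w_0)$ lie in the same path-component of $\mathcal{Y}_k\setminus\mathcal{Y}_k'$. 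By composing loops, it suffices to exhibit, for each $1\leq j\leq k$, a loop $\gamma_j\subset W$ based at $w_0$ whose monodromy on the cover flips only the coordinate $d_j$.

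A natural candidate for $\gamma_j$ is a small loop in $C'_{\mathrm{smooth}}$ around a point $p\in(\ell_j\cup\ell_j')\cap C'$ at which $Q_{m,P_j}|_{C'}$ vanishes to odd order. The hypotheses $\ell_i\cap\ell_j=\emptyset$ for $i\neq j$ (by parallelism) and $\ell_i\cap\ell_j'\notin C'$ for $i\neq j$ ensure that the zero loci of $Q_{m,P_i}|_{C'}$ and $Q_{m,P_j}|_{C'}$ are disjoint on $C'$ whenever $i\neq j$, so a sufficiently small loop around $p$ does not affect the other $d_i$. The main technical obstacle I expect is then to establish, for each $j$, the existence of such an odd-order zero of $Q_{m,P_j}|_{C'}$---or equivalently, that no nontrivial product $\prod_{j\in S}Q_{m,P_j}|_{C'}$ is a square in $\mathbf{C}(C')$. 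In Lemma~\ref{lem:generalC}, I would carry out a case analysis on the tangency behavior of the isotropic lines $\ell_j,\ell_j'$ at $P_j$ and at their other intersection points with $C'$, using B\'ezout's inequality (Lemma~\ref{lem:Bezout}), the bound $\deg C'\geq 2$, and the hypothesis that $C$ (hence $C'$) is neither a line nor a circle to rule out the exceptional configurations in which all intersection multiplicities would be even. In Lemma~\ref{lem:Clineorcirc}, where $C'$ is a line or a conic and $P_j\notin C'$, a direct intersection computation shows that $(\ell_j\cup\ell_j')\cap C'$ contains at least one point with odd intersection multiplicity.
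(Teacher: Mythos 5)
Your proposal follows essentially the same route as the paper: reduce to irreducibility of $\mathcal{C}_k$, pass to a dense open smooth subset, observe that projection to $C'$ (after removing singular points, the $P_j$, and the intersections with $\ell_j,\ell_j'$) gives a $2^k$-fold covering map, and then show the fiber over $(0,0)$ lies in a single path-component by lifting loops that wind around chosen removed points. This is exactly the strategy of Lemmas~\ref{lem:singC}--\ref{lem:Dconnect}.

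Where you diverge is in how explicitly you surface the monodromy issue. You correctly observe that winding a loop once around a point $p$ flips the sign of $d_j$ if and only if $Q_{m,P_j}|_{C'}$ vanishes to \emph{odd} order at $p$, and that what is really needed is that the classes $[Q_{m,P_j}|_{C'}]$ be independent in $\mathbf{C}(C')^\times/(\mathbf{C}(C')^\times)^2$. (A small imprecision: ``each $Q_{m,P_j}|_{C'}$ has an odd-order zero where the other $Q_{m,P_i}|_{C'}$ do not vanish'' is a \emph{sufficient} condition for no nontrivial product to be a square, not an equivalent one, as the divisor-class obstruction could in principle enter; but it is the right sufficient condition to aim for here.) The paper instead produces, in Lemmas~\ref{lem:zj} and~\ref{lem:zjlineorcirc}, a specific point $z_j\in C'\cap(\ell_j\cup\ell_j')$ other than $(a_j,b_j)$ and not in $\mathcal{S}$, and then lifts loops winding around the $z_j$. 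In the line/circle case (Lemma~\ref{lem:zjlineorcirc}) the degree-$2$ Bézout count forces $z_j$ to be a transverse intersection, so the odd-order hypothesis is verified there; in the general case (Lemma~\ref{lem:zj}), the paper only establishes existence of $z_j$, and the odd intersection multiplicity at $z_j$ is used implicitly in Lemma~\ref{lem:Dconnect}. Your proposal flags exactly this as ``the main technical obstacle'' and proposes to close it by a case analysis on tangency using B\'ezout and the bound $\deg C'\geq 3$, but you do not actually carry this out. So your write-up is a correct outline in the same framework as the paper, with the one genuinely delicate step (odd-order vanishing, i.e.\ transversality or odd tangency of $C'$ with $\ell_j\cup\ell_j'$ at some smooth point away from $P_j$) identified but left unfinished; to make it a complete proof you would need to actually supply that intersection-multiplicity analysis, ideally upgrading Lemma~\ref{lem:zj}(3) to produce a $z_j$ at which the intersection multiplicity is $1$ (or odd).
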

As in the proof of Lemma~\ref{lem:generalXirred}, it suffices to check that
$\mathcal{C}_k$ is an irreducible affine variety. We will, similarly, begin this section
by roughly describing the set of singular points of $\mathcal{C}_k$.

\begin{lemma}\label{lem:singC}
    Let $\mathcal{C}_k$ and $\mathcal{S}$ be as in Lemma~\ref{lem:generalC}. Denote the set of singular
    points of $\mathcal{C}_k$ by $\mathcal{C}'_k$. Then $\mathcal{C}'_k$ is finite,
    \begin{equation}\label{eq:singC2}
    \mathcal{C}'_k\supset\left\{(x,y,d_1,\dots,d_k)\in\mathcal{C}_k:(x,y)\in\{P_1,\dots,P_k\}\cup \mathcal{S}\right\},
  \end{equation}
  and, if $(x,y)\in \pi (\mathcal{C}'_k)$, then
  \begin{equation}\label{eq:singC3}
    \mathcal{C}_k'\supset \mathcal{C}_k\cap\left\{(x,y,d_1,\dots,d_k):d_1,\dots,d_k\in\mathbf{C}\right\}.
  \end{equation}
\end{lemma}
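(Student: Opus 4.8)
The plan is to reduce everything to a rank computation for the Jacobian of the defining equations of $\mathcal{C}_k$ and then to feed in the incidence geometry of the lines $\ell_j,\ell_j'$ worked out in the proof of Lemma~\ref{lem:singularpts}. Write $C'=V(Q)$ for an irreducible $Q\in\mathbf{C}[x,y]$, so that $\mathcal{C}_k=V(Q,Q_{m,P_1},\dots,Q_{m,P_k})\subset\mathbf{C}^{k+2}$ is a curve by Lemma~\ref{lem:Cdimdeg}. The first step is to record that this ideal is radical: since each $Q_{m,P_j}$ is monic of degree $2$ in the new variable $d_j$, the ring $\mathcal{O}(\mathcal{C}_k)$ is free of rank $2^k$ over the domain $\mathcal{O}(C')$, hence $\mathcal{C}_k$ is finite flat over $C'$ — so pure of dimension one and Cohen--Macaulay — and it is étale, in particular reduced, over the dense open subset of $C'$ where none of $f_j:=(x-a_j)^2+m(y-b_j)^2$ vanishes; being Cohen--Macaulay and generically reduced, it is reduced. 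Consequently a point $P=(x_0,y_0,d_1^0,\dots,d_k^0)\in\mathcal{C}_k$ is singular if and only if the $(k+1)\times(k+2)$ Jacobian $J_P$ of $(Q,Q_{m,P_1},\dots,Q_{m,P_k})$ has rank at most $k$.

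Next I would carry out an elementary column reduction of $J_P$. Setting $Z:=\{j:d_j^0=0\}$, the $d_j$-column of $J_P$ is a nonzero multiple of the $j$-th coordinate vector when $j\notin Z$ and is the zero column when $j\in Z$; splitting off the former block gives
\begin{equation*}
\operatorname{rank}J_P=(k-|Z|)+\operatorname{rank}M_P,
\end{equation*}
where $M_P$ is the matrix with two columns whose rows are $\bigl(\partial_x Q(x_0,y_0),\,\partial_y Q(x_0,y_0)\bigr)$ together with $\bigl(2(x_0-a_j),\,2m(y_0-b_j)\bigr)$ for $j\in Z$. Hence $P$ is singular exactly when $\operatorname{rank}M_P\le|Z|$. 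Since $(d_j^0)^2=f_j(x_0,y_0)$, membership in $Z$ — and therefore $M_P$, and the singularity of $P$ — depends only on $\pi(P)=(x_0,y_0)$ and not on the choice of point in the fibre; this is precisely \eqref{eq:singC3}.

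It then remains to understand $Z=\{j:(x_0,y_0)\in\ell_j\cup\ell_j'\}$ for $(x_0,y_0)\in C'$. Because the $P_j$ are distinct rational points and $m>0$, the lines $\ell_1,\dots,\ell_k$ are pairwise disjoint, as are $\ell_1',\dots,\ell_k'$; moreover $\ell_j\cap\ell_j'=\{P_j\}$, while for $j\neq j'$ the point $\ell_j\cap\ell_{j'}'$ is exactly the one that hypothesis forbids from lying on $C'$. It follows that $|Z|\le 1$ for every $(x_0,y_0)\in C'$, with $Z=\{j\}$ precisely when $(x_0,y_0)\in(\ell_j\cup\ell_j')\cap C'$. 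Now \eqref{eq:singC2} follows: if $(x_0,y_0)\in\mathcal{S}$ then $\partial_x Q(x_0,y_0)=\partial_y Q(x_0,y_0)=0$ and (by \eqref{eq:notsingular}) $Z=\emptyset$, so $\operatorname{rank}M_P=0=|Z|$; and if $(x_0,y_0)=P_j$ then $Z=\{j\}$ and the row $\bigl(2(a_j-a_j),\,2m(b_j-b_j)\bigr)$ of $M_P$ vanishes, so $\operatorname{rank}M_P\le1=|Z|$ — in both cases $P$ is singular. Finally, the same analysis shows $\pi(\mathcal{C}'_k)\subseteq\mathcal{S}\cup\bigcup_{j=1}^k\bigl((\ell_j\cup\ell_j')\cap C'\bigr)$, which is finite because $\mathcal{S}$ is finite and $C'$, being irreducible and not a line (as $C$ is not a line), meets each of the $2k$ lines in finitely many points; since $\pi|_{\mathcal{C}_k}\colon\mathcal{C}_k\to C'$ has fibres of size at most $2^k$, the set $\mathcal{C}'_k$ is finite.

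The genuinely delicate point is the reducedness of $\mathcal{C}_k$ that legitimises the Jacobian criterion; once that is in hand, the column reduction is routine linear algebra and the combinatorics of the lines $\ell_j,\ell_j'$ is exactly what was already established in the proof of Lemma~\ref{lem:singularpts}.
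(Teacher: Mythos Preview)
Your proof is correct and follows essentially the same Jacobian-rank approach as the paper, but with two noteworthy differences. First, you explicitly establish that the defining ideal of $\mathcal{C}_k$ is radical (via the finite-flat, Cohen--Macaulay, generically-\'etale argument) before invoking the Jacobian criterion in both directions; the paper simply asserts ``by definition, $J(P)$ has rank at most $k$'' when $P$ is singular, which tacitly assumes this. Your care here is justified and fills a small gap. Second, for finiteness the paper just cites the standard fact that an algebraic curve has finitely many singular points, whereas you prove the sharper containment $\pi(\mathcal{C}_k')\subseteq\mathcal{S}\cup\bigcup_j\bigl((\ell_j\cup\ell_j')\cap C'\bigr)$ and use finiteness of the fibres---more informative, though not needed for the lemma as stated. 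For~\eqref{eq:singC3}, the paper's argument is marginally slicker: rather than your column reduction, it simply observes that any other point $P'$ in the fibre over $(x,y)$ satisfies $J(P')=J(P)A$ for a diagonal matrix $A$ with entries $\pm1$, so the ranks agree. Your invocation of hypothesis~\eqref{eq:notsingular} in the $(x_0,y_0)\in\mathcal{S}$ case is harmless but unnecessary: once the $Q$-row of $M_P$ vanishes, $\operatorname{rank}M_P\le|Z|$ regardless of whether $Z=\emptyset$.
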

\begin{proof}
  First of all, it is a standard fact that the set of singular points of an algebraic
  curve is finite (see, for example, Theorem~5.3 of~\cite[Chapter I]{Hartshorne77}).

  Writing $C'=V(Q(x,y))\subset\mathbf{C}^2$ for an appropriate polynomial
  $Q\in\mathbf{Q}[x,y]$, we have that the Jacobian matrix $J(P)$ of $\mathcal{C}_k$ at a point
  $P=(x,y,d_1,\dots,d_k)$ is
    \begin{equation}\label{eq:jacobian}
      J(P)=\begin{pmatrix}
        \frac{\partial}{\partial x}Q(x,y) & \frac{\partial}{\partial y}Q(x,y) & 0 & \dots & \dots & 0 \\
        2(x-a_1) & 2m(y-b_1) & -2d_1 & 0 & \dots & 0 \\
        2(x-a_2) & 2m(y-b_2) & 0 & -2d_2 & \dots & 0 \\
        \vdots & \vdots & \vdots & \vdots & \ddots & \vdots \\
        2(x-a_k) & 2m(y-b_k) & 0 & \dots & 0 & -2d_k
        \end{pmatrix}.
    \end{equation}
    If $P=(x,y,d_1,\dots,d_k)\in\mathcal{C}_k$ with
    $(x,y)\in\{P_1,\dots,P_k\}\cup \mathcal{S}$, then some row of $J(P)$ will
    be the zero vector. This means that $J(P)$ can have rank at most $k$, so
    that $P$ must be a singular point of $\mathcal{C}_k$. This proves \eqref{eq:singC2}.

    Finally, if $P=(x,y,d_1,\dots,d_k)\in\mathcal{C}_k'$, then, by definition, $J(P)$ has
    rank at most $k$. Since $\mathcal{C}_k\subset X_k$, by the definition \eqref{eq:Xk} of $X_k$, any element
    $P'$ of
    \begin{equation*}
    \mathcal{C}_k\cap\left\{(x,y,\tilde{d}_1,\dots,\tilde{d}_k):\tilde{d}_1,\dots,\tilde{d}_k\in\mathbf{C}\right\}    
    \end{equation*}
    has the form $P'=(x,y,(-1)^{\epsilon_1}d_1,\dots,(-1)^{\epsilon_k}d_k)$ for some
    $(\epsilon_1,\dots,\epsilon_k)\in\{0,1\}^k$. Then $J(P')=J(P)A$ for the
    $(k+2)\times(k+2)$ diagonal matrix $A$ that has entries
    $1,1,(-1)^{\epsilon_1},\dots,(-1)^{\epsilon_k}$ along the diagonal. Since $A$ is
    nonsingular, $J(P')$ must have the same rank as $J(P)$. It follows that
    $P'\in\mathcal{C}_k'$ as well, proving \eqref{eq:singC3}.
\end{proof}
Arguing in almost the exact same manner (the only differences being that $C'$ is smooth
and $P_1,\dots,P_k\notin C'$) yields the analogous lemma for when $C$ is a line or circle.
\begin{lemma}\label{lem:singClineorcirc}
    Let $\mathcal{C}_k$ and $C'$ be as in Lemma~\ref{lem:Clineorcirc}. Denote the set of singular
    points of $\mathcal{C}_k$ by $\mathcal{C}'_k$. Then $\mathcal{C}'_k$ is finite and, if $(x,y)\in \pi (\mathcal{C}'_k)$, then
  \begin{equation}\label{eq:singC3lineorcirc}
    \mathcal{C}_k'\supset \mathcal{C}_k\cap\left\{(x,y,d_1,\dots,d_k):d_1,\dots,d_k\in\mathbf{C}\right\}.
  \end{equation}
\end{lemma}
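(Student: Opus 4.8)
The plan is to mimic the proof of Lemma~\ref{lem:singC} essentially word for word, the simplification being that $C'$ is now smooth (so the singular locus $\mathcal{S}$ is empty and plays no role) and $P_1,\dots,P_k\notin C'$ (so no point of $\mathcal{C}_k$ has first two coordinates in $\{P_1,\dots,P_k\}$). Consequently the analog of the containment~\eqref{eq:singC2} simply disappears, and only finiteness of $\mathcal{C}'_k$ together with the containment~\eqref{eq:singC3lineorcirc} remain to be shown.

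First I would dispose of finiteness: by Lemma~\ref{lem:Cdimdeg} (which covers both the setting of Lemma~\ref{lem:generalC} and that of Lemma~\ref{lem:Clineorcirc}), $\overline{\mathcal{C}_k}$ is a curve, and the singular locus of an algebraic curve is a finite set of points (e.g.\ Theorem~5.3 of~\cite[Chapter~I]{Hartshorne77}), so $\mathcal{C}'_k$ is finite.

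For~\eqref{eq:singC3lineorcirc}, I would write $C'=V(Q(x,y))$ for an appropriate polynomial $Q\in\mathbf{Q}[x,y]$ (of degree $1$ or $2$), so that the Jacobian matrix $J(P)$ of $\mathcal{C}_k$ at a point $P=(x,y,d_1,\dots,d_k)$ is exactly the matrix displayed in~\eqref{eq:jacobian}. Suppose $(x,y)\in\pi(\mathcal{C}'_k)$, witnessed by some $P=(x,y,d_1,\dots,d_k)\in\mathcal{C}'_k$, so that $J(P)$ has rank at most $k$. Since $\mathcal{C}_k\subset X_k$, the defining equations~\eqref{eq:Xk} force any $P'\in\mathcal{C}_k$ with $\pi(P')=(x,y)$ to have the form $P'=(x,y,(-1)^{\epsilon_1}d_1,\dots,(-1)^{\epsilon_k}d_k)$ for some $(\epsilon_1,\dots,\epsilon_k)\in\{0,1\}^k$, as each $d_j$ is a square root of $Q_{m,P_j}(x,y,0)$. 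Then $J(P')=J(P)A$, where $A$ is the invertible $(k+2)\times(k+2)$ diagonal matrix with diagonal entries $1,1,(-1)^{\epsilon_1},\dots,(-1)^{\epsilon_k}$, so $J(P')$ has the same rank as $J(P)$, namely at most $k$, and hence $P'\in\mathcal{C}'_k$; this is precisely~\eqref{eq:singC3lineorcirc}.

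I do not expect a real obstacle: the argument is a verbatim specialization of the proof of Lemma~\ref{lem:singC}, and the only points requiring (minimal) care are that finiteness is borrowed from Lemma~\ref{lem:Cdimdeg} rather than proved in situ, and that the fiber of $\pi|_{\mathcal{C}_k}$ over a fixed $(x,y)$ consists exactly of the sign flips of a single point — which is what makes the diagonal–matrix trick go through and is immediate from the fact that each defining equation determines $d_j$ up to sign.
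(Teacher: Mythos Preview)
Your proposal is correct and follows essentially the same approach as the paper, which simply says to argue ``in almost the exact same manner'' as Lemma~\ref{lem:singC}, noting that $C'$ is smooth and $P_1,\dots,P_k\notin C'$. Your write-up makes these adaptations explicit and uses the same Jacobian/diagonal-matrix argument for~\eqref{eq:singC3lineorcirc}.
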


Next, we will analyze the intersection of $C'$ with the lines
$\ell_1,\ell_1',\dots,\ell_k,\ell_k'$ defined in the proof of Lemma~\ref{lem:singularpts}.
\begin{lemma}\label{lem:zj}
  Let $C'$ and $P_1,\dots,P_k$ be as in Lemma~\ref{lem:generalC} and, for each
  $j=1,\dots,k$, define
  \begin{equation*}
    \ell_j : x+i\sqrt{m}y=a_j+i\sqrt{m}b_j
  \end{equation*}
  and
  \begin{equation*}
   \ell_j' : x-i\sqrt{m}y=a_j-i\sqrt{m}b_j.
  \end{equation*}
  Then, for each $j=1,\dots,k$ we have
  \begin{enumerate}
      \item $|C'\cap\ell_j|,|C'\cap\ell_j'|<\infty$,
      \item $C'\cap\ell_j\cap\ell_j'=\{(a_j,b_j)\}$, and
      \item there exists a point
        \begin{equation*}
        z_{j}\in C'\cap\left(\ell_j\cup\ell_j'\right)  
        \end{equation*}
        not equal to $(a_j,b_j)$ or any of the singular points of $C'$.
  \end{enumerate}
\end{lemma}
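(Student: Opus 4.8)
The plan is to prove the three assertions of Lemma~\ref{lem:zj} in order, working entirely in $\mathbf{C}^2$ with the curve $C'$, which is defined over $\mathbf{Q}$ and contains the origin as a nonsingular point.

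\textbf{Part (1).} Each of $\ell_j$ and $\ell_j'$ is an irreducible (affine) line. Since $C'$ is an irreducible curve, $C'\cap\ell_j$ is finite unless $\ell_j\subseteq C'$, i.e.\ unless $C'=\ell_j$ (as both are irreducible curves of dimension $1$). But $\ell_j$ passes through $(a_j,b_j)$ with $x$-coordinate line $x+i\sqrt{m}y=a_j+i\sqrt{m}b_j$; if $C'$ equaled $\ell_j$ (or $\ell_j'$), then $C'$ would be a complex line, hence its image under $(x,y)\mapsto(x,y\sqrt m)$ would be a line, contradicting the hypothesis that $C$ is not a line. (Alternatively, $C'$ is defined over $\mathbf{Q}$, so it cannot coincide with $\ell_j$ or $\ell_j'$, which are not defined over $\mathbf{Q}$.) This gives $|C'\cap\ell_j|,|C'\cap\ell_j'|<\infty$.

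\textbf{Part (2).} The lines $\ell_j$ and $\ell_j'$ are distinct (one has slope $-1/(i\sqrt m)$, the other $+1/(i\sqrt m)$, in the $(x,y)$-plane), so they meet in the single point obtained by solving the two linear equations, which is exactly $(a_j,b_j)$ — this was already computed in the proof of Lemma~\ref{lem:singularpts}. Since $P_j=(a_j,b_j)\in C'$ by hypothesis, we get $C'\cap\ell_j\cap\ell_j'=\{(a_j,b_j)\}$.

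\textbf{Part (3).} This is the only part requiring a genuine argument, and it is where I expect the main obstacle to lie. The point is to produce \emph{some} intersection point of $C'$ with $\ell_j\cup\ell_j'$ other than $(a_j,b_j)$ and other than any singular point of $C'$. Suppose for contradiction that $C'\cap(\ell_j\cup\ell_j')\subseteq\{(a_j,b_j)\}\cup\mathcal S$. Writing $C'=V(Q)$ for an irreducible $Q\in\mathbf{Q}[x,y]$, restrict $Q$ to the line $\ell_j$, parametrized (say) by $t\mapsto(a_j+i\sqrt m(b_j-t),\,t)$ or more conveniently by the parameter $z$ as in the proof of Lemma~\ref{lem:YnoY'}; this gives a nonzero univariate polynomial $q_j(z):=Q|_{\ell_j}$ (nonzero precisely because $\ell_j\not\subseteq C'$, by Part~(1)), whose roots are exactly the points of $C'\cap\ell_j$. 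The idea is that $\deg q_j=\deg Q=\deg C'$ — restricting a degree-$D$ plane curve to a line generically gives $D$ intersection points (with multiplicity), and here the line $\ell_j$ is not a component of $C'$, so by B\'ezout (or directly, since the leading form of $Q$ does not vanish identically on the direction of $\ell_j$ unless that direction is a point at infinity of $C'$, of which there are only finitely many — and the $P_j$ can be assumed to avoid these finitely many bad slopes, which correspond to finitely many excluded values, just as in the hypotheses of Lemma~\ref{lem:generalC}) we get $\deg q_j\geq 1$, indeed $q_j$ has a root. If the \emph{only} roots of $q_j$ (and of the analogous $q_j'$ on $\ell_j'$) lay in $\{(a_j,b_j)\}\cup\mathcal S$, then since $\mathcal S$ is finite and independent of $j$, and since $(a_j,b_j)$ and the $|\mathcal S|$ singular points are fixed, the condition ``$z_j$ exists'' fails only if $C'\cap(\ell_j\cup\ell_j')\subseteq\{P_j\}\cup\mathcal S$. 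Now use that $C$ (hence $C'$) is not a line or a circle: if every line through $P_j$ of slope $\pm1/(i\sqrt m)$ met $C'$ only at $P_j$ together with points of the fixed finite set $\mathcal S$, one can vary $j$ — the hypothesis \eqref{eq:notsingular} already guarantees that the relevant points $a_j\pm i\sqrt m b_j$ avoid the finitely many values coming from $\mathcal S$, so no point of $\mathcal S$ lies on $\ell_j\cup\ell_j'$ at all. Therefore the only possible exceptional intersection point is $(a_j,b_j)$ itself, and since $\deg q_j=\deg C'$ counted with multiplicity, the failure of Part~(3) forces $q_j$ to be a scalar times $(z-z_j)^{\deg C'}$, i.e.\ $\ell_j$ meets $C'$ only at $P_j$ but with multiplicity $\deg C'$; likewise for $\ell_j'$. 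A plane curve meeting a line at a single point with multiplicity equal to its degree, through a \emph{smooth} point of the curve (which $P_j$ is, by hypothesis), forces $\ell_j$ to be the tangent line to $C'$ at $P_j$ and, when $\deg C'\geq 3$, an inflectional tangent of very high order; for $\deg C'=1$ it forces $C'=\ell_j$, excluded, and for $\deg C'=2$ it forces $C'$ to be tangent to $\ell_j$ at $P_j$ and to the conjugate line $\ell_j'$ at $P_j$ as well, so $C'$ is a conic tangent to two distinct lines at the same point — impossible unless $C'$ is a double line. The remaining case $\deg C'=2$ with $C'$ a smooth conic: a smooth conic meets the line $\ell_j$ in two points counted with multiplicity, so either there are two distinct intersection points (one of which is $\ne P_j$, giving $z_j$), or $\ell_j$ is tangent at $P_j$; applying the same to $\ell_j'$ and using that the isotropic lines $\ell_j,\ell_j'$ are exactly the lines through the circular points at infinity, one sees that a conic tangent to both isotropic directions through a real point is a circle (up to the $\sqrt m$ rescaling), contradicting that $C$ is not a circle. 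Assembling these cases yields a contradiction in every case, so the point $z_j$ exists.

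\textbf{Summary of the plan and the main obstacle.} Parts~(1) and~(2) are immediate from irreducibility of $C'$ and the fact that $C'$ is defined over $\mathbf{Q}$ while the isotropic lines $\ell_j,\ell_j'$ are not, together with the hypothesis that $C$ is neither a line nor a circle. The substance is Part~(3): one must rule out that the isotropic lines $\ell_j$ and $\ell_j'$ through $P_j$ meet $C'$ \emph{only} at $P_j$ (after using \eqref{eq:notsingular} to discard the singular points entirely). The clean way to do this is via intersection multiplicity: a line through a smooth point of an irreducible plane curve $C'$ of degree $D$ either meets $C'$ in a second point or is tangent at that point with multiplicity $\geq 2$; full multiplicity $D$ at a single smooth point is extremely restrictive, and having it simultaneously along both isotropic directions $\ell_j$ and $\ell_j'$ forces $C'$ to be a line ($D=1$) or a circle ($D=2$), both excluded by hypothesis, while $D\geq 3$ is killed by a dimension/tangency count. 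The main obstacle is therefore organizing the case analysis on $\deg C'$ cleanly and invoking the ``not a line or circle'' hypothesis at exactly the right place; everything else is bookkeeping with B\'ezout's inequality (Lemma~\ref{lem:Bezout}) and the finiteness of $\mathcal S$.
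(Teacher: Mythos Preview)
Parts~(1) and~(2) are fine. The gap is in Part~(3), and it is exactly where you sensed the main obstacle but then waved it away. Your argument hinges on the claim that the restriction $q_j:=Q|_{\ell_j}$ has degree equal to $\deg C'$, so that the failure of~(3) forces $\ell_j$ to meet $C'$ at $P_j$ with full multiplicity $\deg C'$. This is false in general: the degree drops precisely when the direction of $\ell_j$---the point $[-i\sqrt{m}:1:0]$ at infinity---lies on $\overline{C'}$. Crucially, this direction is \emph{fixed} (all the $\ell_j$ are parallel), so contrary to your parenthetical remark it cannot be avoided by choosing $P_j$; it is not one of the finitely many ``bad'' configurations ruled out by the hypotheses of Lemma~\ref{lem:generalC}. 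Once there is intersection at infinity, the affine intersection $C'\cap\ell_j$ can perfectly well be the single point $P_j$ with multiplicity $1$, and your tangency contradiction evaporates. Your $D\geq 3$ case is then just an assertion (``killed by a dimension/tangency count'') with no argument behind it, and your $D=2$ analysis misses exactly the case where the conic passes through the isotropic points at infinity---which is precisely the circle case you need to rule out.

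The paper's proof works projectively from the start and tracks two intersection multiplicities along $\overline{\ell_j}$: the multiplicity $n_1$ at $P_j$ and the multiplicity $n_2$ at the point at infinity. Two facts, both consequences of $C'$ being defined over~$\mathbf{Q}$, do the real work: first, the two isotropic points $[\pm i\sqrt{m}:1:0]$ are Galois conjugate, forcing $n_2=n_2'\leq d/2$; second, since $P_j$ is a \emph{rational} smooth point, the tangent line to $C'$ there has rational slope and hence cannot be either isotropic line, forcing $n_1=n_1'=1$. If~(3) failed one would need $n_1+n_2=d$, i.e.\ $1+d/2\geq d$, so $d\leq 2$, and then $d=1$ (line) and $d=2$ (circle, via the leading form) are excluded by hypothesis. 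Your sketch contains the germ of the $n_1=1$ idea in the $D=2$ case but never states it in general, and never addresses the contribution at infinity; both ingredients are needed to close the argument.
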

\begin{proof}
  We certainly have $|C'\cap\ell_j|,|C'\cap\ell_j'|<\infty$ for each $j=1,\dots,k$, since
  $C'$ is an irreducible curve that is not a line. In the proof of
  Lemma~\ref{lem:singularpts}, we saw that $\ell_j\cap\ell_j'=\{(a_j,b_j)\}$ for each
  $j=1,\dots,k$, which gives~(2) since $(a_j,b_j)=P_j\in C'$.

  To prove~(3), first note that $\overline{C'}=V(\overline{Q})$. By B\'ezout's theorem,
  the number of intersections, counted with multiplicity, of $\overline{C'}$ with
  $\overline{\ell_j}$ and of $\overline{C'}$ with $\overline{\ell_j'}$ are both
  $d:=\deg{Q}$. Letting $n_1,n_2,n_1',$ and $n_2'$ denote the intersection multiplicity of
  $\overline{C'}$ with $\overline{\ell_j}$ at $[a_j:b_j:1]$ and at a point at infinity and
  with $\overline{\ell_j'}$ at $[a_j:b_j:1]$ and at a point at infinity, respectively, it
  thus suffices to show that $n_1+n_2<d$ or $n_1'+n_2'<d$.

  Let us first consider the intersection multiplicities $n_2$ and $n_2'$ of
  $\overline{C'}$ with $\overline{\ell_j}$ and of $\overline{C'}$ with
  $\overline{\ell_j'}$, respectively, at infinity when $d\geq 3$. The point at infinity
  lying on $\overline{\ell_j}$ is $[-i\sqrt{m}:1:0]$ and the point at infinity lying on
  $\overline{\ell_j'}$ is $[i\sqrt{m}:1:0]$. On $\{[x:1:z]: x,z \in \mathbf{C}\}$,
  $\overline{\ell_j}$ and $\overline{\ell_j'}$ can be parametrized as
  \[\left\{[-i\sqrt{m}+(a_j+i\sqrt{m}b_j)z:1:z]: z \in \mathbf{C}\right\}\] and
  \[\left\{[i\sqrt{m}+(a_j-i\sqrt{m}b_j)z:1:z]: z \in \mathbf{C}\right\},\] respectively. Thus, $n_2$
  is the multiplicity of $z=0$ as a root of
  $Q_{+}(z)=\overline{Q}(-i\sqrt{m}+(a_j+i\sqrt{m}b_j)z, 1, z)$ and $n_2'$ is the
  multiplicity of $z=0$ as a root of
  $Q_{-}(z)=\overline{Q}(i\sqrt{m}+(a_j-i\sqrt{m}b_j)z, 1, z)$. It then follows from
  Lemma~\ref{lem:Rpolys} and the assumption that
  $(a_j,b_j)\notin \bigcap_{l=0}^{d-2}V(\tilde{R}_{l,+},\tilde{R}_{l,-})$ that there
  exists a $0\leq l\leq d-2$ such that the coefficient of $z^{l}$ in $Q_{+}(z)$ or
  $Q_{-}(z)$ is nonzero. Thus, $n_2$ or $n_2'$ is at most $d-2$ when $d\geq 3$.
  
  Now, note that $n_1$ (respectively, $n_1'$) is the intersection multiplicity of the line
  $x+i\sqrt{m}y=0$ (respectively, $x-i\sqrt{m}y=0$) with the curve defined by the
  vanishing of the polynomial $Q_{a_j,b_j}(x,y)=Q(x+a_j,y+b_j)$ at the point
  $(0,0)$. Thus, $n_1$ (respectively, $n_1'$) is the multiplicity of the root $y=0$ of the
  polynomial $\tilde Q_{+}(y)=Q_{a_j,b_j}(-i\sqrt{m}y,y)$ (respectively,
  $\tilde Q_{-}(y)=Q_{a_j,b_j}(i\sqrt{m}y,y)$). If $n_1$ (respectively, $n_1'$) were greater than
  $1$, we would have that if $\tilde Q_{+}(y)=\sum_{t=0}^{d}c_ty^t$ (respectively,
  $\tilde Q_{-}(y)=\sum_{t=0}^dc_t'y^t$), then $c_0=c_1=0$ (respectively, $c_0'=c_1'=0$). But,
  observe that, as a function of $(a_j,b_j)$, the linear term of $Q_{a_j,b_j}(x,y)$ equals
    \begin{equation*}
      \frac{\partial Q}{\partial x}(a_j,b_j)\cdot x+\frac{\partial Q}{\partial y}(a_j,b_j)\cdot y,
    \end{equation*}
    so that $c_1=c_1'=0$ if and only if both
    \begin{equation}\label{eq:zero1}
      -i\sqrt{m}\frac{\partial Q}{\partial x}(a_j,b_j)+\frac{\partial Q}{\partial y}(a_j,b_j)=0,
    \end{equation}
    and
    \begin{equation}\label{eq:zero2}
      i\sqrt{m}\frac{\partial Q}{\partial x}(a_j,b_j)+\frac{\partial Q}{\partial y}(a_j,b_j)=0.
    \end{equation}
    But, since $Q$ has rational coefficients, neither~\eqref{eq:zero1}
    nor~\eqref{eq:zero2} can hold since, by hypothesis, $(a_j,b_j)$ is not a singular point of
    $C'$. We therefore conclude that $n_1=n_1'=1$.

    Thus, if $d\geq 3$, then $n_1+n_2$ or $n_1'+n_2'$ is less than $d$, from which (3)
    follows. Since $d\neq 1$ by the assumption that $C$ is not a line, it only remains to
    rule out that $n_1+n_2,n_1'+n_2'=d$ when $d=2$.
    
    In the case $d=2$, $n_1+n_2=n_1'+n_2'=d$ would imply that $n_1=n_2=n_1'=n_2'=1$.  Write
    $Q(x,y)=Q_2(x,y)+Q_1(x,y)+Q_0$ where $Q_j$ is the degree $j$ homogeneous component of
    $Q$ for $j=0,1,2$. Then $n_2=n_2'=1$ implies that $Q_2(\mp i\sqrt{m},1)=0$, which
    means that $Q_2(x,1)=x^2+m$ and, thus, that the degree $2$ homogeneous component of
    $Q(x,y/\sqrt{m})$ must equal $x^2+y^2$. This forces $C$ to either be a circle or the
    union of two lines, both of which contradict our assumptions on $C$ in
    Lemma~\ref{lem:generalC}.

    We have thus shown that there exists a point $z_j\in C'\cap(\ell_j\cup\ell_j')$ not
    equal to $(a_j,b_j)$ for each $j=1,\dots,k$. That $z_j$ is, additionally, not a
    singular point of $C'$ follows immediately from the assumption~\eqref{eq:notsingular}
    in Lemma~\ref{lem:generalC}, which says that none of the singular points of $C'$ lie
    in $\bigcup_{j=1}^k(\ell_j\cup\ell_j')$. This completes the proof of the lemma.
\end{proof}

\begin{lemma}\label{lem:zjlineorcirc}
  Let $C'$ and $P_1,\dots,P_k$ be as in Lemma~\ref{lem:Clineorcirc} and, for each
  $j=1,\dots,k$, define
  \begin{equation*}
    \ell_j : x+i\sqrt{m}y=a_j+i\sqrt{m}b_j
  \end{equation*}
  and
  \begin{equation*}
   \ell_j' : x-i\sqrt{m}y=a_j-i\sqrt{m}b_j.
  \end{equation*}
  Then, for each $j=1,\dots,k$ we have
  \begin{enumerate}
      \item $|C'\cap\ell_j|,|C'\cap\ell_j'|<\infty$,
      \item $C'\cap\ell_j\cap\ell_j'=\emptyset$, and
      \item there exists a point
        \begin{equation*}
        z_{j}\in C'\cap\left(\ell_j\cup\ell_j'\right).  
        \end{equation*}
  \end{enumerate}
\end{lemma}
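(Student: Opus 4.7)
The plan is to proceed in parallel with the proof of Lemma~\ref{lem:zj}, taking advantage of the fact that $C'$ is now of degree at most $2$ and $P_j\notin C'$. Part~(2) is immediate: by the proof of Lemma~\ref{lem:singularpts} we have $\ell_j\cap\ell_j'=\{P_j\}$, and $P_j\notin C'$ by hypothesis. For (1), each of $C'$, $\ell_j$, $\ell_j'$ is an irreducible plane curve, so B\'ezout's inequality (Lemma~\ref{lem:Bezout}) gives $|C'\cap\ell_j|<\infty$ once we rule out $C'=\ell_j$; but $C'$ is defined over $\mathbf{Q}$ while the coefficient vector $(1,i\sqrt{m},-(a_j+i\sqrt{m}b_j))$ of $\ell_j$ involves the irrational $i\sqrt{m}$ and so cannot be proportional to any rational vector. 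The same argument handles $\ell_j'$.

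For (3), I would apply B\'ezout to $\overline{C'}\cap\overline{\ell_j}$ in $\mathbf{P}^2$, which accounts for $\deg C'\in\{1,2\}$ intersections counted with multiplicity. In the line case, $\overline{C'}$ has a rational point at infinity, distinct from the (irrational) point at infinity $[-i\sqrt{m}:1:0]$ of $\overline{\ell_j}$, so the unique projective intersection is affine and can be taken as $z_j$. In the circle case, I would write $C'$ as $x^2+my^2+Dx+E\sqrt{m}y+F=0$ with $D,F\in\mathbf{Q}$ and $E\sqrt{m}\in\mathbf{Q}$; then the homogenization $\overline{C'}$ meets the line at infinity exactly at the ``warped circular points'' $[\pm i\sqrt{m}:1:0]$, which are respectively the points at infinity of $\overline{\ell_j}$ and $\overline{\ell_j'}$. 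So $[-i\sqrt{m}:1:0]$ is automatically an intersection of $\overline{C'}$ with $\overline{\ell_j}$, and the other (counted with multiplicity) is affine unless $\overline{\ell_j}$ is tangent to $\overline{C'}$ at that point.

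The crux, and the main obstacle, is ruling out this tangency-at-infinity in the circle case. A direct tangent-line computation (using that the center of $C'$ is $(-D/2,-E/(2\sqrt{m}))\in\mathbf{Q}^2$) shows that $\overline{\ell_j}$ is tangent to $\overline{C'}$ at $[-i\sqrt{m}:1:0]$ precisely when $P_j$ equals the center of $C'$, and by a symmetric computation $\overline{\ell_j'}$ is tangent to $\overline{C'}$ at $[i\sqrt{m}:1:0]$ under the same condition. Thus, as long as $P_j$ is not the center of $C'$, at least one of $\overline{\ell_j}$, $\overline{\ell_j'}$ has a second, affine intersection with $\overline{C'}$, yielding the desired $z_j$. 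This non-degeneracy on $P_j$ is not explicit in the stated hypotheses---a simple example with $m=2$, $C:x^2+y^2-2x=0$, and $P_j=(1,0)$ shows $C'\cap(\ell_j\cup\ell_j')=\emptyset$---so to complete the proof faithfully I would either add the implicit assumption ``$P_j$ is not the center of $C'$'' (the natural analogue of the non-singular-point condition imposed in Lemma~\ref{lem:zj}) or verify that it is enforced by the broader context in which the lemma is applied.
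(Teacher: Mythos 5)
Your argument is correct, and it in fact exposes a genuine error in the paper's own proof of this lemma. Parts (1) and (2), and the line case of (3), agree with the paper up to cosmetic differences. In the circle case of (3), however, the paper asserts that the intersection multiplicity $n$ of $\overline{C'}$ with $\overline{\ell_j}$ at $[-i\sqrt{m}:1:0]$ equals the multiplicity of $x=-i\sqrt{m}$ as a root of $\overline{Q}(x,1,0)=x^2+m$, and concludes $n=n'=1$. But that root-multiplicity is the intersection multiplicity of $\overline{C'}$ with the \emph{line at infinity} $\{z=0\}$, not with $\overline{\ell_j}$, and the two need not coincide. Your tangent-line computation correctly identifies when they differ: working in local coordinates $u=x+i\sqrt{m}$, $v=z$ near $[-i\sqrt{m}:1:0]$ (with $y=1$), the curve $\overline{C'}$ has tangent $u=(a+i\sqrt{m}b)v$, where $(a,b)$ is the center of $C'$, while $\overline{\ell_j}$ is the line $u=(a_j+i\sqrt{m}b_j)v$. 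So tangency at infinity occurs precisely when $P_j=(a,b)$, and then it occurs simultaneously for $\overline{\ell_j'}$, giving $n=n'=2$ and no affine intersection. Your counterexample ($m=2$, $C:x^2+y^2-2x=0$, $P_j=(1,0)$, so $C':x^2+2y^2-2x=0$) is valid; one checks directly that $C'\cap(\ell_j\cup\ell_j')=\emptyset$.

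So the lemma as stated is false, and the paper's proof is wrong (the phrase ``as in the proof of the previous lemma'' is importing an identification of intersection multiplicities that does not hold). Note the consequence you would expect: when $P_j$ is the center, $Q_{m,P_j}(x,y,0)$ is constant on $C'$, so $d_j$ is locally constant on $\mathcal{C}_k$ and $\mathcal{C}_k$ is visibly reducible --- Lemma~\ref{lem:Clineorcirc} itself fails, not just this auxiliary statement. Your proposed repair, adding the hypothesis that no $P_j$ is the center of $C'$, is the right one; it is the natural analogue of the nonsingularity condition in Lemma~\ref{lem:zj}, and it is cheap to enforce in the proof of Theorem~\ref{thm:stronger} by forbidding at most one additional point when choosing $\tilde{P}_1',\dots,\tilde{P}_k'$. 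So the error is local and does not threaten the main theorem, but it is a real gap that your blind reconstruction caught and the paper's own argument does not.
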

\begin{proof}
  Since $\ell_j\cap\ell_j'=\{(a_j,b_j)\}$ for all $j=1,\dots,k$, we certainly always have
  (2), i.e., $C'\cap\ell_j\cap\ell_j'=\emptyset$.

  For the other two claims, we begin with the case that $C$, and thus $C'$, is a line, so
  that $C'=V(Q(x,y))$ where $Q(x,y)=ax+by-c$ for $a,b,c\in\mathbf{Q}$. Then no two of
  $\{C',\ell_j,\ell_j'\}$ are parallel, so that $|C'\cap\ell_j|=|C'\cap\ell_j'|=1$,
  yielding~(1) and~(3).
    
  Now we deal with the case that $C$ is a circle, so that $C'=V(Q(x,y))$ where
  $Q(x,y)=(x-a)^2+m(y-b)^2-r$ with $a,b,r\in\mathbf{Q}$. As in the proof of the previous
  lemma, we certainly have $|C'\cap\ell_j|,|C'\cap\ell_j'|<\infty$ since $C'$ is
  irreducible and not a line. Also as in the proof of the previous lemma, we have that the
  number of intersections (counted with multiplicity) of $\overline{C'}$ with
  $\overline{\ell_j}$ and of $\overline{C'}$ with $\overline{\ell_j'}$ are both equal to
  $\deg{Q}=2$. So, to conclude~(3) in this case, we just need to show that it is
  impossible for the intersection multiplicities $n$ of $\overline{C'}$ with
  $\overline{\ell_j}$ at $[-i\sqrt{m}:1:0]$ (the point at infinity lying on
  $\overline{\ell_j}$) and $n'$ of $\overline{C'}$ with $\overline{\ell'_j}$ at
  $[i\sqrt{m}:1:0]$ (the point at infinity lying on $\overline{\ell_j'}$) to both equal
  $2$. On $\{[x:1:z]: x,z \in \mathbf{C}\}$, $\overline{\ell_j}$ and $\overline{\ell_j'}$
  can be parametrized as
  \[\left\{[-i\sqrt{m}+(a_j+i\sqrt{m}b_j)z:1:z]: z \in \mathbf{C}\right\}\] and
  \[\left\{[i\sqrt{m}+(a_j-i\sqrt{m}b_j)z:1:z]: z \in \mathbf{C}\right\},\] respectively. Thus, $n$
  is the multiplicity of $z=0$ as a root of
  $Q_{+}(z):=\overline{Q}(-i\sqrt{m}+(a_j+i\sqrt{m}b_j)z, 1, z)$ and $n'$ is the
  multiplicity of $z=0$ as a root of
  $Q_{-}(z):=\overline{Q}(i\sqrt{m}+(a_j-i\sqrt{m}b_j)z, 1, z)$. If $n$ (respectively,
  $n'$) were equal to $2$, then $Q_+(z)$ (respectively, $Q_-(z)$) would be equal to a
  multiple of $z^2$. But, expanding out the definition of $Q_+(z)$ (respectively,
  $Q_-(z)$) yields
  \begin{equation*}
      Q_+(z)=\left((a_j+i\sqrt{m}b_j-a)^2+mb^2-r\right)z^2+\left(-2 i\sqrt{m}(a_j-a)+2m(b_j-b)\right)z
    \end{equation*}
    (respectively,
    \begin{equation*}
      Q_-(z)=\left((a_j-i\sqrt{m}b_j-a)^2+mb^2-r\right)z^2+\left(2 i\sqrt{m}(a_j-a)+2m(b_j-b)\right)z\text{)},
    \end{equation*}
    which is of the form $uz^2+vz$ for some nonzero $u,v\in\mathbf{C}$ by the assumption
    that $(a_j,\sqrt{m}b_j)$ is not the center of $C$ (which implies that
    $(a_j,b_j)\neq (a,b)$). Thus, we must have $n=n'=1$. So, in fact,
    $|C'\cap\ell_j|=|C'\cap\ell_j'|=1$ for all $j=1,\dots,k$, which implies~(3)
    in this case as well.
\end{proof}

Let $\mathcal{C}_k$ be as in Lemma~\ref{lem:generalC} or Lemma~\ref{lem:Clineorcirc}, and
set $\mathcal{D}_k:=\mathcal{C}_k\setminus\mathcal{C}'_k$ to be the set of nonsingular
points of $\mathcal{C}_k$. Analogously to our argument\footnote{See the discussion between Lemma \ref{lem:singularpts} and Lemma \ref{lem:YnoY'}.} in Section~\ref{sec:ag}, to show
that $\mathcal{C}_k$ is irreducible, it suffices to show that $\mathcal{D}_k$ is
irreducible, and since $\mathcal{D}_k$ is a smooth quasiaffine variety, it further
suffices to show that $\mathcal{D}_k$ is connected. As in Section~\ref{sec:ag.irr}, we
define
\begin{equation*}
\mathcal{D}'_k:=\left\{(x,y,d_1,\dots,d_k)\in \mathcal{D}_k:Q_{m,P_j}(x,y,0)=0\text{ for some }j=1,\dots,k\right\},  
\end{equation*}
so that
\begin{equation*}
 \mathcal{D}_k\setminus \mathcal{D}'_k=\mathcal{D}_k\setminus\bigcup_{j=1}^k\pi^{-1}(\ell_j\cup\ell'_j)=\mathcal{C}_k\setminus \left(\mathcal{C}'_k\cup \bigcup_{j=1}^k\pi^{-1}(\ell_j\cup\ell'_j)\right).
\end{equation*}
Since $\mathcal{D}_k\setminus\mathcal{D}_k'$ is dense in $\mathcal{D}_k$ (as all irreducible components of $\mathcal{C}_k$ are one-dimensional), it suffices
to show that $\mathcal{D}_k\setminus\mathcal{D}_k'$ is connected, and, analogously to the
argument in Section~\ref{sec:ag}, it further suffices to show that
$\mathcal{D}_k\setminus\mathcal{D}_k'$ is connected in the Euclidean topology on $\mathbf{C}^{k+2}$, which we will work in for the remainder of this section.
\begin{lemma}\label{lem:Dconnect}
  The set $\mathcal{D}_k\setminus \mathcal{D}'_k$ is path connected.
\end{lemma}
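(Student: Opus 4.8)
The plan is to mimic the proof of Lemma~\ref{lem:YnoY'} very closely, adapting it to the presence of the curve $C'$. As there, I would work with the projection $\pi\colon\mathbf{C}^{k+2}\to\mathbf{C}^2$ onto the first two coordinates. Let $\mathcal{S}$ be the (finite) set of singular points of $C'$ (empty if $C$ is a line or circle), and set
\[
W:=C'\setminus\left(\mathcal{S}\cup\bigcup_{j=1}^k(\ell_j\cup\ell_j')\right),
\]
so that, using Lemmas~\ref{lem:singC} and~\ref{lem:singClineorcirc} together with the identity displayed just before the lemma statement, $\pi$ restricts to a covering map $\pi|_{\mathcal{D}_k\setminus\mathcal{D}_k'}\colon\mathcal{D}_k\setminus\mathcal{D}_k'\to W$ of degree $2^k$ (each fiber over a point of $W$ has exactly $2^k$ points, since none of $Q_{m,P_j}(x,y,0)$ vanishes there). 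The first step is to show that $W$ is path connected: this is where the hypothesis that $C'$ is defined over $\mathbf{Q}$ and the origin lies on $C'$ but is not a singular point of $C'$ (in the Lemma~\ref{lem:generalC} case) gets used. Since $C'$ is an irreducible affine curve, $C'\setminus(\text{finite set})$ is connected in the Zariski topology, hence irreducible; and the smooth locus of an irreducible curve is a connected Riemann surface, so removing finitely many points from it keeps it connected. Concretely, $W$ is obtained from the smooth locus of $C'$ by deleting finitely many points, so $W$ is a connected (noncompact) Riemann surface and in particular path connected.

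The second step is the monodromy computation, exactly parallel to the one in Lemma~\ref{lem:YnoY'}. As in that proof, it suffices to exhibit a single base point $z_0\in\mathcal{D}_k\setminus\mathcal{D}_k'$ whose fiber $F$ under $\pi|_{\mathcal{D}_k\setminus\mathcal{D}_k'}$ lies in a single path-component; concatenation of lifted paths then handles an arbitrary pair of points. I would take the point lying over the origin, i.e.
\[
z_0:=\left(0,0,\sqrt{a_1^2+mb_1^2},\dots,\sqrt{a_k^2+mb_k^2}\right),
\]
which lies in $\mathcal{D}_k\setminus\mathcal{D}_k'$ precisely because the origin is on $C'$, is not a singular point of $C'$ (Lemma~\ref{lem:generalC}) or not on $C'$ but we instead use a different base point in the line/circle case — actually in the line/circle case the origin is on $C'$ and $C'$ is smooth, so $z_0$ works there too. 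The fiber $F$ consists of the $2^k$ sign patterns $\left(0,0,(-1)^{\epsilon_1}\sqrt{a_1^2+mb_1^2},\dots,(-1)^{\epsilon_k}\sqrt{a_k^2+mb_k^2}\right)$. Now, for each $A\subset\{1,\dots,k\}$, I need a closed loop $\gamma$ based at the origin, contained in $W$, whose lift takes $z_0$ to the point of $F$ with sign pattern $1_A$. Using the covering-space structure of $W$ (a connected Riemann surface with several punctures), one can choose $\gamma$ encircling exactly the punctures of $C'$ coming from the lines $\ell_j$ with $j\in A$ — here the key point, furnished by Lemma~\ref{lem:zj} (resp.\ Lemma~\ref{lem:zjlineorcirc}), is that for each $j$ there is a genuine puncture $z_j\in C'\cap(\ell_j\cup\ell_j')$ distinct from $P_j$ and from all singular points, so that these punctures are available to be looped around. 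Then, just as in Lemma~\ref{lem:YnoY'}, one observes that along $\gamma$ the function $Q_{m,P_j}(\gamma(t))$ restricted to $C'$ vanishes to order exactly $1$ at each such puncture (this is a transversality/simple-zero statement that must be checked, and is where Lemma~\ref{lem:zj}'s analysis of intersection multiplicities is really used), so encircling it once changes the argument of $Q_{m,P_j}$ by $2\pi$ and hence flips the sign of the chosen continuous square root $d_j(t)$, while leaving $d_{j'}$ for $j'\notin A$ unchanged. This produces a path in $\mathcal{D}_k\setminus\mathcal{D}_k'$ from $z_0$ to the sign pattern $1_A$, and ranging over all $A$ shows $F$ is contained in one path-component.

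The main obstacle I anticipate is the transversality bookkeeping in the second step: unlike in Section~\ref{sec:ag}, where $Q_{m,P_j}(z,z/(i\sqrt m),0)$ was manifestly a nonzero linear function of a global coordinate $z$, here one must verify that the restriction of $Q_{m,P_j}(\cdot,0)$ to the (possibly singular, higher-degree) curve $C'$ has a simple zero at the chosen point $z_j\in C'\cap(\ell_j\cup\ell_j')$, and that distinct $j$ give distinct punctures so the loops can be chosen independently. The simple-zero claim should follow from the fact that $z_j$ is a smooth point of $C'$ together with the observation that $\ell_j$ (or $\ell_j'$) is not tangent to $C'$ at $z_j$ — which in turn is exactly the content guaranteed by the argument in Lemma~\ref{lem:zj} that the relevant intersection multiplicity at a point other than $P_j$ equals $1$ — while distinctness of the punctures uses that $P_1,\dots,P_k$ are distinct. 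Once these local computations are in place, the covering-space and winding-number argument runs verbatim as in the proof of Lemma~\ref{lem:YnoY'}.
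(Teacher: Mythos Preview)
Your approach is essentially the same as the paper's: reduce to showing that the fiber over the origin lies in a single path-component of the $2^k$-fold cover of a punctured Riemann surface, and produce the required monodromy via loops encircling the punctures $z_j$ furnished by Lemma~\ref{lem:zj} (or~\ref{lem:zjlineorcirc}). Two details in your write-up do not quite land, however.

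First, the claim that ``distinctness of the punctures uses that $P_1,\dots,P_k$ are distinct'' is not sufficient: distinct $P_j$'s give distinct (parallel) lines $\ell_j$, but a priori $z_j\in\ell_j$ and $z_{j'}\in\ell_{j'}'$ could coincide at the single point of $\ell_j\cap\ell_{j'}'$. Ruling this out is exactly what the hypothesis in Lemmas~\ref{lem:generalC} and~\ref{lem:Clineorcirc} that $\left(\tfrac{a_j+a_{j'}}{2}+i\sqrt{m}\tfrac{b_j-b_{j'}}{2},\tfrac{b_j+b_{j'}}{2}-i\tfrac{a_j-a_{j'}}{2\sqrt{m}}\right)\notin C'$ is for, and the paper invokes it here explicitly.

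Second, and more substantively, your proposed justification of the simple-zero claim at $z_j$ mis-reads Lemma~\ref{lem:zj}: the intersection-multiplicity computation $n_1=n_1'=1$ in that proof concerns the point $P_j=(a_j,b_j)$, not the auxiliary point $z_j$. Nothing in Lemma~\ref{lem:zj} asserts that $\ell_j$ (or $\ell_j'$) meets $C'$ transversally at $z_j$, so the assertion that $Q_{m,P_j}|_{C'}$ has a \emph{simple} zero there is not supplied by that lemma. The paper itself is terse at this step, asserting the sign-flip without verifying the order of vanishing; so this is a shared imprecision rather than a flaw peculiar to your proposal, but you should be aware that the justification you cite does not say what you need.
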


\begin{proof}
  We argue along the lines of the proof of Lemma~\ref{lem:YnoY'}. Observe that
  \begin{equation*}
\pi :\mathcal{D}_k\setminus\mathcal{D}_k'\to C'\setminus \left(\pi(\mathcal{C}_k')\cup\bigcup_{j=1}^k(\ell_j\cup\ell_j')\right)=:C''
  \end{equation*}
  is a $2^k$-fold covering map by claim \eqref{eq:singC3} of Lemma~\ref{lem:singC} or claim~\eqref{eq:singC3lineorcirc} of Lemma~\ref{lem:singClineorcirc}. Since, by
  Lemmas~\ref{lem:singC} and~\ref{lem:zj} or by Lemmas~\ref{lem:singClineorcirc} and~\ref{lem:zjlineorcirc}, the set $$C'\setminus C''=C'\cap
  \left(\pi(\mathcal{C}_k')\cup\bigcup_{j=1}^k(\ell_j\cup\ell_j')\right)$$ is finite, by
  applying the normalization theorem for irreducible algebraic curves to
  $\overline{C'}\subset\mathbf{P}^2$ and using that any compact Riemann surface is
  homeomorphic to either the $2$-sphere or a finite connected sum of $2$-tori (see,
  respectively,~\cite{Griffiths1989} and~\cite{Donaldson2011}, for example), it follows
  that $C''$ is homeomorphic to either a $2$-sphere minus a finite number of points or a
  finite connected sum of $2$-tori minus a finite number of points, where there is a
  bijective correspondence between the finitely many points removed from the Riemann
  surface and the finitely many points in $\overline{C'}\setminus C''$. One consequence of this observation is that $C''$ is path-connected. So, analogously to the
  proof of Lemma~\ref{lem:YnoY'}, it thus suffices to
  show that the fiber
  $F:=\left(\pi|_{\mathcal{D}_k\setminus\mathcal{D}'_k}\right)^{-1}(\pi(z_0))$ is
  path-connected in $\mathcal{D}_k\setminus\mathcal{D}_k'$, where
  \begin{equation*}
    z_0:=\left(0,0,\sqrt{a_1^2+mb_1^2},\dots,\sqrt{a_k^2+mb_k^2}\right)\in \mathcal{D}_k\setminus \mathcal{D}'_k.
  \end{equation*}
 Note that 
  \begin{equation*}
   F=\left\{\left(0,0,(-1)^{\epsilon_1}\sqrt{a_1^2+mb_1^2},\dots,(-1)^{\epsilon_k}\sqrt{a_k^2+mb_k^2}\right):(\epsilon_1,\dots,\epsilon_k)\in\{0,1\}^k\right\}\subset\mathcal{D}_k\setminus \mathcal{D}'_k
  \end{equation*}
  since none of $P_1,\dots,P_k$ is the origin, $C'$ is nonsingular at the origin, and none
  of the lines $\ell_1,\ell_1',\dots,\ell_k,\ell_k'$ pass through the origin. By one of the
  assumptions of Lemma~\ref{lem:generalC} and Lemma~\ref{lem:Clineorcirc}, the set
\begin{equation*}
  (\ell_j\cup \ell_j')\cap (\ell_t\cup
    \ell_t')=\left\{\left(\tfrac{a_j+a_t}{2}+i\sqrt{m}\tfrac{b_j-b_t}{2},\tfrac{b_j+b_t}{2}-i\tfrac{a_j-a_t}{2\sqrt{m}}\right),\left(\tfrac{a_j+a_t}{2}-i\sqrt{m}\tfrac{b_j-b_t}{2},\tfrac{b_j+b_t}{2}+i\tfrac{a_j-a_t}{2\sqrt{m}}\right)\right\}
  \end{equation*}
  is disjoint from $C'$ for every  $1\leq j< t\leq k$, so the points
  \begin{equation*}
    z_j\in C'\cap\left(\ell_j\cup \ell_j'\right)\setminus \left(\mathcal{S}\cup \{(a_j,b_j)\}\right),\quad j=1,\dots,k,
  \end{equation*}
  or
  \begin{equation*}
    z_j\in C'\cap\left(\ell_j\cup \ell_j'\right),\quad j=1,\dots,k,
  \end{equation*}
  defined as in Lemma~\ref{lem:zj}~(3) or Lemma~\ref{lem:zjlineorcirc}~(3), respectively, are all distinct. A second consequence of our observation about the topology of $C''$ is that, for any
  choice of $J\subset\{1,\dots,k\}$, there exists a path $\gamma_J$ contained in $C''$
  starting and ending at $(0,0)$ with winding number $1$ around each of the points in
  $\{z_j:j\in J\}$ and with winding number $0$ around each of the points in $(C'\setminus C'')\setminus\{z_j:j\in J\}$. 
  
  Lifting the closed path $\gamma_J$ then  gives rise to a
  path in $\mathcal{D}_k\setminus \mathcal{D}'_k$ from $z_0$ to the point in
  $\left(\pi|_{\mathcal{D}_k}\right)^{-1}(\pi(z_0))$ with the property that, for $j\in J$, its $(2+j)$-th coordinate is the opposite of the $(2+j)$-th coordinate of $z_0$, i.e., equals
  \begin{equation*}
    -\sqrt{Q_{m,P_j}\left(0,0,0\right)}=-\sqrt{a_j^2+mb_j^2},
  \end{equation*}
while, for $j\in \{1,\dots,k\}\setminus J$, its $(2+j)$-th coordinate agrees with the $(2+j)$-th coordinate of $z_0$, i.e., equals
  \begin{equation*}  \sqrt{Q_{m,P_j}\left(0,0,0\right)}=\sqrt{a_j^2+mb_j^2}.
\end{equation*}
 In other words, $\gamma_J$ lifts from $C''$ to a path in $\mathcal{D}_k\setminus \mathcal{D}'_k$ connecting $z_0$ to the point
    \begin{equation*}
        \left(0,0,(-1)^{1_J(1)}\sqrt{a_1^2+mb_1^2},\dots,(-1)^{1_J(k)}\sqrt{a_k^2+mb_k^2}\right)\in \left(\pi|_{\mathcal{D}_k\setminus\mathcal{D}'_k}\right)^{-1}(\pi(z_0)).
    \end{equation*} 
     Since $J\subset\{1,\dots,k\}$ was arbitrary, it follows that all elements of $\left(\pi|_{\mathcal{D}_k\setminus\mathcal{D}'_k}\right)^{-1}(\pi(z_0))=\left(\pi|_{\mathcal{D}_k\setminus\mathcal{D}'_k}\right)^{-1}(0,0)$ are in the same path-component, completing the proof of the lemma. 
\end{proof}

This completes the proof of Lemma~\ref{lem:generalCirred}.  Combining
Lemmas~\ref{lem:Cdimdeg} and~\ref{lem:generalCirred} now yields Lemmas~\ref{lem:generalC} and~\ref{lem:Clineorcirc}.

\section{The intersections of integer distance sets with lines and circles}\label{sec:circles}

\subsection{Lines} Proposition~\ref{prop:lines} was essentially proved in
\cite[Section~5]{KurzWassermann2011}. However, since some of the terms used in \cite{KurzWassermann2011} are
nonstandard and \cite[Theorem~4]{KurzWassermann2011} is phrased in a manner that is not equivalent
to Proposition~\ref{prop:lines},  we will present a detailed proof of Proposition~\ref{prop:lines} here.

\begin{proof}[Proof of Proposition~\ref{prop:lines}] Let $K$ be the maximal number of points $P_1,\dots,P_K$ of $S$ that are all contained in a line $\ell$. Without loss of generality, we may assume that $\ell=\mathbf{R}\times \{0\}$ and  $P_i=(m_i,0)$ for $i=1,\dots,K$, where $m_1<m_2<\dots<m_K$. Let $Q=(q_1,q_2) \in S$ be such that the triangle with vertices $P_1$, $Q$, and $P_K$ has maximal area (note that by our assumption that not all the points of $S$ are collinear, this maximal area must be positive). Then, $|q_2|>0$ is the height of the triangle $\triangle=P_1QP_K$ from $Q=(q_1,q_2)$ to $(q_1,0)$ on the base $P_1P_K$ of $\triangle$. For $i=1,\dots,K-1$, set $a_i:=m_{i+1}-m_i$ and $a:=m_K-m_1$, so that $a_i,a\in\mathbf{N}$.  For $i=1,\dots,K$, let $b_i$ be the distance between $Q$ and $P_i$, so that $b_i\in\mathbf{N}$. See Figure \ref{fig:prop1.3}.
   \begin{figure}
    \centering
\centerline{\includegraphics[height=2.7in]{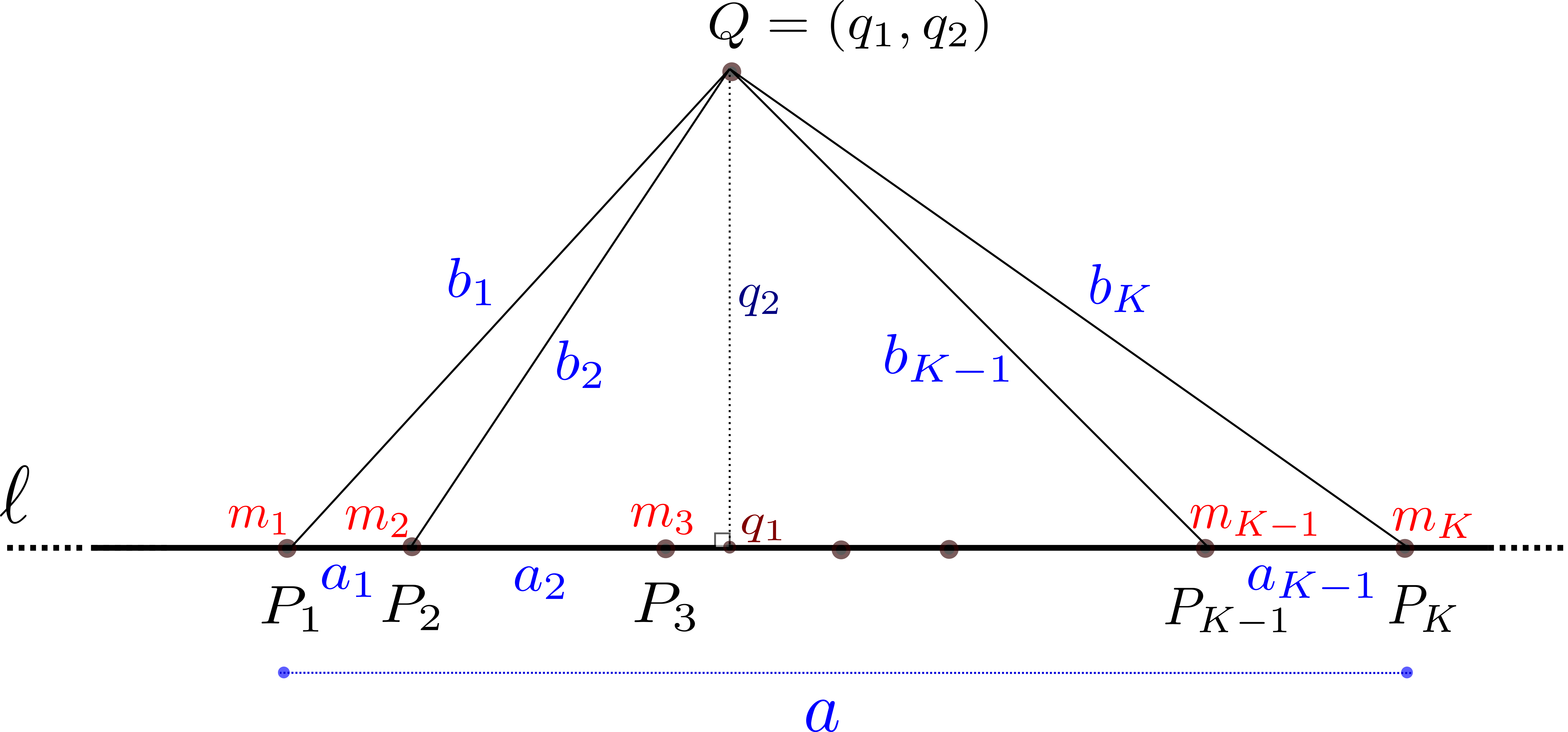}}
    \caption{The points $P_1=(m_1,0),\dots,P_K=(m_K,0)$ lie on the line $\ell=\mathbf{R}\times\{0\}$, and all the distances $a_1,\dots,a_{K-1},a,b_1,\dots,b_{K}$ are integers.}
    \label{fig:prop1.3}
\end{figure}
Then, by 
Pythagoras' theorem, we have $$q_2^2+(q_1-m_1)^2=b_1^2 \quad \text{ and } \quad q_2^2+\left(a-(q_1-m_1)\right)^2=b_K^2,$$
which, on subtracting one equation from the other, gives \begin{equation}\label{eq:cs}
    q_1-m_1=\frac{b_1^2-b_K^2+a^2}{2a}.
\end{equation}
Letting $D:=\frac{2a}{\left(b_1^2-b_K^2+a^2,2a\right)}\in\mathbf{Z}$, then, by \eqref{eq:cs}, $D(q_1-m_1)$ is also an integer; hence, for  $i=1,\dots,K$, since $m_i-m_1=\sum_{j=1}^{i-1}a_j$ is in $\mathbf{N}$, we have that $$D(q_1-m_i)=D(q_1-m_1)-D(m_i-m_1)$$ is an integer too.  A further application of Pythagoras' theorem gives 
$q_2^2=b_i^2-(q_1-m_i)^2$ for $i=1,\dots,K$. Thus, 
\begin{equation}\label{eq:decomposition}
    D^2q_2^2=\left[Db_i+D(q_1-m_i)\right]\left[Db_i-D(q_1-m_i)\right]
\end{equation}
for $i=1,\dots,K$. Note that we either have $m_{\lfloor K/2\rfloor}\leq q_1$ or $m_{\lfloor K/2\rfloor}> q_1$. If  the former holds, then for every $2\leq i\leq \lfloor K/2\rfloor$ we have  $b_{i-1}>b_i$ and $(q_1-m_{i-1})>(q_1-m_i)>0$; if the latter holds, then  for every $\lceil K/2\rceil \leq i\leq K$ we have  $b_{i-1}<b_i$ and $0<(m_{i-1}-q_1)<(m_i-q_1)$. We conclude that, in either case, 
$$\left|\left\{Db_i+D(q_1-m_i)\colon i=1,\dots,K\right\}\right|\asymp K.$$
So, since $Db_i$ and $D(q_1-m_i)$, $i=1,\dots,K$ are all integers, \eqref{eq:decomposition} implies that the natural number $D^2q_2^2$ has $\gg K$ different divisors, hence  $K \ll\tau(D^2q_2^2)$. Since $D^2q_2^2\ll N^4$, we thus have 
\begin{equation*}
    K\ll \tau (D^2 q_2^2)\ll (D^2 q_2^2)^{O\left(1/\log\log (D_2 q_2^2)\right)}\ll N^{O(1/\log\log N)},
\end{equation*}
using the standard estimate $\tau(n)\ll n^{O(1/\log\log n)}$ (see, for example,~\cite[Theorem 315]{HW79}), which gives the claim.
\end{proof}

\subsection{Circles} 
Recall that Proposition~\ref{prop:main} controls the size of an integer distance set in $[-N,N]^2$ that lives on an arbitrary circle. It turns out that the bulk of the argument lies in tackling the case where the circle is fully contained in $[-N,N]^2$; the general case easily follows. In particular, for any positive real number $r$, define $c(r)$ to be the maximum size of an integer distance set contained in the circle of radius $r$ centered at the origin. 
Then, this special case of Proposition~\ref{prop:main} is equivalent to the following.
\begin{proposition}\label{prop:mainalt}
    We have
    \begin{equation*}
        \max_{r\in[1,N]}c(r)\ll N^{O(1/\log\log{N})}.
    \end{equation*}
\end{proposition}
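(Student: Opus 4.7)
The plan is to use Lemma~\ref{lem:lattice} to realise the points of $S$ as integer solutions of a single positive-definite binary quadratic form of negative discriminant, with target value bounded polynomially in $N$, and then to conclude via the classical bound on the number of representations of an integer by such a form. This representation bound is precisely the replacement, in the general class-number case, for the appeal to unique factorization in $\mathbf{Z}[\sqrt{-D}]$ in Bat-Ochir's original argument.

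If $|S|\leq 2$ or $S$ is collinear, the bound follows from Proposition~\ref{prop:lines}, so assume $|S|\geq 3$ and $S$ is not contained in a line. Fix any $P_0\in S$ and set $S':=S-P_0$, an integer distance set containing the origin and lying on the translated circle $C-P_0$ of radius $r$. By Lemma~\ref{lem:lattice} applied to $S'$, one may assume, after rotating, that $S'\subset\{(x,y\sqrt{m}):x,y\in\frac{1}{2M}\mathbf{Z}\}$ for some squarefree $m\in\mathbf{N}$ and positive integer $M\leq 2r\leq 2N$; inspecting the proof of that lemma further gives $m\ll N^{O(1)}$. Any three non-collinear points of $S'$ determine the centre $(\alpha,\gamma)$ of $C-P_0$ through a linear system with rational coefficients of bounded denominators, from which a short computation yields $\alpha\in\mathbf{Q}$ and $\gamma=\beta_0/\sqrt{m}$ for some $\beta_0\in\mathbf{Q}$. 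Choosing the three points so that their spanning triangle has non-degenerate area (at least $1/(8M^2)$ in the $(x,y)$-coordinates, by the lattice structure and non-collinearity of $S'$), the heights of $\alpha$ and $\beta_0$ come out bounded polynomially in $N$, and one has $r^2=\alpha^2+\beta_0^2/m\in\mathbf{Q}$ as well.

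The condition that $(x,y\sqrt{m})\in S'$ lies on $C-P_0$ now unwinds to $m(x-\alpha)^2+(my-\beta_0)^2=mr^2$. Choosing a positive integer $D\ll N^{O(1)}$ that is a common denominator for $\alpha$, $\beta_0/m$, and $1/(2M)$, the assignment
\[
(x,y\sqrt{m})\longmapsto(U,V):=\bigl(D(x-\alpha),\,D(my-\beta_0)\bigr)
\]
embeds $S'$ injectively into the set
\[
\bigl\{(U,V)\in\mathbf{Z}^2:\,mU^2+V^2=N_0\bigr\},
\]
where $N_0:=D^2mr^2$ is a positive integer with $N_0\ll N^{O(1)}$.

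The final step is to bound the size of this solution set. Since $mU^2+V^2$ is a positive-definite integral binary quadratic form of discriminant $-4m$, the classical theory of such forms gives that the number of its representations of a positive integer $n$ is $O(\tau(n))$ with an absolute implied constant: representations correspond (up to the finitely many units of $\mathbf{Z}[\sqrt{-m}]$) to principal integral ideals of norm $n$ in this imaginary quadratic order, and the total number of integral ideals of norm $n$ in any imaginary quadratic order is bounded by a constant times $\tau(n)$. Combining this with the divisor bound $\tau(n)\ll n^{O(1/\log\log n)}$ and $N_0\ll N^{O(1)}$ yields
\[
|S|\ll\tau(N_0)\ll N^{O(1/\log\log N)},
\]
as required. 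The main obstacle is invoking the classical representation bound with the correct uniform control in the discriminant $-4m$, particularly when $\mathbf{Z}[\sqrt{-m}]$ is a non-maximal order of $\mathbf{Q}(\sqrt{-m})$; this is the substantive replacement for the unique factorization step in Bat-Ochir's argument. The secondary technical matter of controlling the heights of $\alpha$, $\beta_0$, and $D$ polynomially in $N$ is essentially routine given the choice of three well-spread points in $S'$.
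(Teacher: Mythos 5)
Your proof is correct, and it takes a genuinely different route from the paper's. The paper never invokes Lemma~\ref{lem:lattice} here: it works within circle geometry, first using Heron's formula and the circumradius formula (Lemma~\ref{lem:specialradius}) to normalize the radius to $n/(2\sqrt{D})$ with $(n,D)=1$ via an iterated ``flipping'' reduction adapted from Bat-Ochir, and then (Lemma~\ref{lem:circlebound}) using the inscribed-angle theorem together with a Legendre-symbol factorization $n=n_1n_2$ to reduce to counting integer solutions of $x^2+Dy^2=n_2^2$. You instead invoke Lemma~\ref{lem:lattice} to place the translated set in $\left\{(x,y\sqrt{m}):x,y\in\tfrac{1}{2M}\mathbf{Z}\right\}$, solve a rational linear system to pin down the centre $(\alpha,\beta_0/\sqrt{m})$ with heights controlled by the lattice gap $1/(2M)$ and the minimum triangle area $\geq 1/(8M^2)$, and then inject $S'$ directly into $\{(U,V)\in\mathbf{Z}^2:mU^2+V^2=N_0\}$ with $N_0\ll N^{O(1)}$. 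Your route is more direct and avoids the radius normalization and flipping lemma entirely, at the cost of the (routine) height bookkeeping for $\alpha,\beta_0,D$; the paper's route is more self-contained geometrically and has a cleaner arithmetic form $x^2+Dy^2$. Both land on a $\tau$-type bound for representations by a definite binary form. The concern you flag about uniformity in the discriminant and non-maximality of $\mathbf{Z}[\sqrt{-m}]$ is not actually an obstacle: from $mU^2+V^2=N_0$ one has $(V+U\sqrt{-m})(V-U\sqrt{-m})=N_0$ in the maximal order $\mathcal{O}_K$ of $\mathbf{Q}(\sqrt{-m})$; the map $(U,V)\mapsto V+U\sqrt{-m}$ is injective, each image generates an $\mathcal{O}_K$-ideal of norm $N_0$, the number of such ideals is $\leq\tau(N_0)$, and $|\mathcal{O}_K^\times|\leq 6$, giving an absolute bound $6\tau(N_0)$ with no dependence on $m$.
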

We can deduce Proposition \ref{prop:main} from Proposition \ref{prop:mainalt} as follows.

\begin{proof}[Proof of Proposition~\ref{prop:main}] Let $N\in\mathbf{N}$, $C$ be a circle, and $S\subset C$ be an integer distance set. Our goal is to show that
\begin{equation*}
    |S\cap [-N,N]^2|\ll N^{O(1/\log\log N)}.
\end{equation*}

 Let $c>0$ be an absolute constant to be fixed shortly. Since $N$ is assumed to be large, if $C$ has radius  at most $cN^2$, then $|S|\ll N^{O(1/\log\log N)}$ by Proposition~\ref{prop:mainalt}. Thus, the proof is complete in this case.

If $C$ has radius  $r\geq cN^2$, then  we claim that $|S\cap [-N,N]^2|\leq 2$, provided that $c$ is chosen to be sufficiently large. Indeed, letting $\theta>0$ denote the minimal angle between any three (necessarily non-collinear) points of $S\cap [-N,N]^2$, i.e., $$\theta=\min \left\{\measuredangle P_1P_2P_3 : P_1,P_2,P_3\in S\cap [-N,N]^2 \text{ and } 0 < \measuredangle  P_1P_2P_3 <\pi \right\},$$ then, by \cite[Observation 1]{Solymosi2003}, we have \begin{equation}\label{eq:theta}
    \theta\gg \frac{1}{N}.
\end{equation}  
As the diameter of $[-N,N]^2$ is $\ll N$, the intersection of $C$ with $[-N,N]^2$ must be contained in an arc $A$ of $C$ of length $\ll N$. Therefore, the central angle $\alpha$  of $A$ in $C$ satisfies $\alpha r\ll N$, which gives 
\begin{equation}\label{eq:alpha}
    \alpha \ll \frac{N}{r}\leq\frac{1}{cN}.
\end{equation} 
Fixing $c$ sufficiently large, it follows from~\eqref{eq:theta} that $\frac{1}{cN}$ is strictly smaller than $\theta$, and thus, by \eqref{eq:alpha}, that 
\begin{equation}\label{eq:alphatheta}
    \alpha<\theta. 
\end{equation}
Now, suppose to the contrary that $A$ intersects $S\cap [-N,N]^2$ in a set containing three distinct points $P_1,P_2,P_3$; reordering the points if needed, we can assume that $0< \measuredangle P_1 P_2 P_3 \leq \frac{\alpha}{2}$. Thus, from the definition of $\theta$, we must have $\theta  \leq \frac{\alpha}{2}$,
contradicting \eqref{eq:alphatheta}.
We conclude that $A$ can intersect $S\cap [-N,N]^2$ in at most two points, completing the proof in this case as well.
\end{proof}

It remains to prove Proposition \ref{prop:mainalt}. We will begin by showing that one can assume the radius of $C$ is of a certain special form.
\begin{lemma}\label{lem:specialradius}
    There exists an absolute constant $K>0$ such that
    \begin{equation}\label{eq:specialradius}
      \max_{r\in[1,N]}c(r)\leq\max_{\substack{n,D\in\mathbf{N} \\
          (n,D)=1\\D\text{ squarefree} \\ n\leq KN^3\text{ and }D\leq KN^4}}c\left(\frac{n}{2\sqrt{D}}\right)
    \end{equation}
\end{lemma}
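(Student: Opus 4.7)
If $\max_{r\in[1,N]}c(r)\le 2$, the inequality is immediate: the pair $(n,D)=(2,1)$ is valid for any $K\ge 1$ and $N\ge 1$, gives $n/(2\sqrt{D})=1$, and $c(1)\ge 2$. So assume that $c(r)\ge 3$ for some $r\in[1,N]$, and fix such an $r$ together with an integer distance set $S$ on the circle of radius $r$ about the origin with $|S|=c(r)$. Choose three non-collinear points of $S$; they form a triangle with positive integer side lengths $a,b,c\le 2r\le 2N$. By the circumradius formula $r=abc/(4K)$ and Heron's formula $16K^2=\Lambda:=(a+b+c)(-a+b+c)(a-b+c)(a+b-c)\in\mathbf{Z}_{>0}$, we obtain
\[
r^2 \;=\; \frac{(abc)^2}{\Lambda}\in\mathbf{Q},
\]
with $\Lambda\le(6N)^4$ and $(abc)^2\le(2N)^6$.

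Write $\Lambda=k^2D$ with $D$ squarefree, so that $r=abc/(k\sqrt{D})$. The plan is to scale $S$ by a positive integer $t$ --- yielding an integer distance set $tS$ of the same cardinality as $S$, inscribed in the circle of radius $tr$ --- chosen so that $tr=n/(2\sqrt{D'})$ with $(n,D')$ satisfying all the required conditions: $\gcd(n,D')=1$, $D'$ squarefree, $n\le KN^3$, and $D'\le KN^4$ for an absolute $K$. This would give $c(r)=|tS|\le c(n/(2\sqrt{D'}))$, which is dominated by the right-hand side of the stated inequality. Taking $t=k/\gcd(k,2abc)$ makes $n:=2t\cdot abc/k=2abc/\gcd(k,2abc)$ a positive integer and $tr=n/(2\sqrt{D})$; the bounds $n\le 2abc\le 16N^3$ and $D\le\Lambda\le 1296\,N^4$ immediately verify the size conditions (with $K=1296$, say).

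The main obstacle is enforcing the coprimality condition $\gcd(n,D')=1$. When the naive choice $D'=D$ gives $\gcd(n,D)>1$, no further integer scaling of $S$ can correct this: replacing $S$ by $tS$ multiplies $n$ by $t$ while leaving $D$ invariant (one checks that the squarefree part of $\Lambda$ is scale-invariant). In the examples where this happens --- for instance $r=\sqrt{3}$ with the equilateral triangle of side $3$, giving $n=6$, $D=3$, $\gcd=3$ --- one resolves the issue by either (i) replacing the original triangle by a different integer distance triangle inside $S$ yielding coprime $(n,D)$ after the same construction, when such a triangle exists; or (ii) observing that the very restrictive arithmetic structure of $r$ in the remaining cases (essentially, $r$ must be of the form $p\sqrt{\rho}$ for an integer $p$ and a squarefree $\rho\ge 2$ coming from a single similarity class of triangle) forces $c(r)$ to be bounded by a small absolute constant, which is in turn trivially dominated by entries of the right-hand side such as $c(2/\sqrt{3})\ge 3$ arising from $(n,D)=(4,3)$. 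Handling this case analysis --- verifying that one can always either achieve the coprimality or else bound $c(r)$ directly by a configuration on a nice radius already present in the maximum --- is the principal technical content of the proof.
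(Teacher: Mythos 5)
Your opening reductions (handling $c(r)\le 2$ trivially, applying Heron and the circumradius formula, and dilating by an integer to reach $tr = n/(2\sqrt{D})$) match the paper's, and you correctly observe that further integer scaling of $S$ cannot repair the coprimality of $n$ and $D$ since it leaves the squarefree part of $\Lambda$ invariant. That observation is a good diagnosis of the difficulty but not a cure, and it is precisely where your proposal leaves a genuine gap. Neither of your proposed resolutions (i) or (ii) is substantiated, and (ii) is in fact false: radii of the form $p\sqrt{\rho}$ with $\rho\ge 2$ squarefree do \emph{not} force $c(r)$ to be bounded by an absolute constant. For example, start from any large integer distance set on a circle of radius $n_0/(2\sqrt{D_0})$ with $(n_0,D_0)=1$ and dilate it by $D_0$; when $n_0$ is even this lands on a circle of radius $(n_0/2)\sqrt{D_0}$, exactly the form you single out, and $c$ there is at least as large as the original (hence unbounded). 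Choosing a different triangle, as in (i), also cannot work unconditionally: the squarefree part $D$ is a similarity invariant of the circle itself (it is determined by $r^2\in\mathbf{Q}$), not by which triangle one selects, so if $(n,D)$ fails to be coprime for one triple it will fail for all of them after the same construction.

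The missing idea, and the real content of the paper's proof, is the ``flip'' inequality
\begin{equation*}
  c\!\left(\frac{k\sqrt{m_1}}{2\sqrt{m_2}}\right)\le c\!\left(\frac{k}{2\sqrt{m_1m_2}}\right)
\end{equation*}
valid whenever $(km_1,m_2)=1$, $m_1,m_2$ are squarefree, and $c\ge 3$. This is \emph{not} achieved by integer dilation (which your analysis correctly rules out) but by a contraction: one shows, via the laws of sines and cosines together with a divisibility argument exploiting the squarefreeness of $m_1$ and the coprimality of $m_1$ with $m_2$, that \emph{every} pairwise distance in an integer distance set on the circle of radius $k\sqrt{m_1}/(2\sqrt{m_2})$ must be divisible by $m_1$; dividing the whole configuration by $m_1$ then yields an integer distance set of the same cardinality on the circle of radius $k/(2\sqrt{m_1m_2})$. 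Iterating this flip (after factoring $D$ into the part dividing $\ell_3$ and its complement) strips out the shared prime factors and eventually lands on a pair $(n,D)$ with $(n,D)=1$ and the required size bounds. Your writeup asserts that ``handling this case analysis \ldots is the principal technical content of the proof'' but supplies no mechanism for doing so; the divisibility-and-contraction argument is that mechanism and cannot be omitted.
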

\begin{proof}
 Suppose that $C_r\subset\mathbf{R}^2$ is the circle of radius $r\geq 1$ centered at the
    origin. If the largest integer distance set contained in $C_r$ has size $2$, then the desired inequality automatically holds by taking $K\geq 2$ (since one can inscribe an equilateral
    triangle with side length $1$ inside a circle of radius $\frac{1}{\sqrt{3}}$). So, suppose that $C_r$ contains three distinct points $P_1,P_2,$ and $P_3$ such that
    \begin{equation*}
        d:=\|P_1-P_2\|,\qquad d':=\|P_1-P_3\|,\qquad\text{and}\qquad d'':=\|P_2-P_3\|
    \end{equation*}
    are all integers. Then, by Heron's formula and the circumradius formula, we have
    \begin{equation*}
        r=\frac{dd'd''}{\sqrt{(d+d'+d'')(d+d'-d'')(d-d'+d'')(-d+d'+d'')}}.
    \end{equation*}
    Thus, there exist $\ell_1,\ell_2\in\mathbf{N}$ and a squarefree $D\in\mathbf{N}$ such
    that $r=\frac{\ell_1}{\ell_2\sqrt{D}}$. Since $r\in [1,N]$, we certainly have
    $\ell_1\ll N^3$, $\ell_2\ll N^2$, and $D\ll N^4$. By dilating $C_r$ by $\ell_2$, we see that
    \[
      c(r)\leq c(\ell_2r)=c\left(\frac{\ell_3}{2\sqrt{D}}\right)
    \]
    for the positive integer $\ell_3=2\ell_1\ll N^3$. Note that the proof of~\eqref{eq:specialradius} is not yet complete, as $\ell_3$ and $D$ are not necessarily coprime. Since $D$ is squarefree, we can write
    $$\frac{\ell_3}{2\sqrt{D}}=\frac{\ell\sqrt{D_1}}{2\sqrt{D_2}}$$ for
    $\ell,D_1,D_2\in\mathbf{N}$ with $(\ell D_1,D_2)=1$ and both $D_1$ and $D_2$ being squarefree,
    where we have $D_1D_2= D$ and $\ell\leq \ell_3\ll N^3$ (specifically, $D_1$ is the product of those prime factors of $D$ that divide $\ell_3$). Next, by adapting
    the proof of Proposition~1 of~\cite{Bat-Ochir2019}, we will show that
    \begin{equation}\label{eq:flip}
      c\left(\frac{k\sqrt{m_1}}{2\sqrt{m_2}}\right) 
      \leq c\left(\frac{k}{2\sqrt{m_1m_2}}\right)
    \end{equation}
    whenever $k,m_1,m_2\in\mathbf{N}$ are such that $(km_1,m_2)=1$, $m_1$ and $m_2$ are
    squarefree, and $c\left(\frac{k\sqrt{m_1}}{2\sqrt{m_2}}\right)\geq 3$. The conclusion
    of the lemma will then follow by repeated applications of \eqref{eq:flip}, which eventually gives that
    \begin{equation*}
      c\left(\frac{\ell_3}{2\sqrt{D}}\right)=c\left(\frac{\ell\sqrt{D_1}}{2\sqrt{D_2}}\right)
      \leq
      c\left(\frac{n}{2\sqrt{D}}\right),
    \end{equation*}
    where
    \begin{equation*}
      n:=\prod_{\substack{p^a \mid\mspace{-2mu}\mid \ell_3 \\ (p,D)=1}}p^a.
    \end{equation*}

    It therefore remains to prove \eqref{eq:flip}. For $r'=\frac{k\sqrt{m_1}}{2\sqrt{m_2}}$ with $c(r')\geq 3$,  let
    $P_1',P_2',P_3'\in C_{r'}$ be any distinct points such that
    $$e:=\|P_1'-P_2'\|,\;\;\;e':=\|P_1'-P_3'\|,\;\;\;e'':=\|P_2'-P_3'\|\in\mathbf{N}$$ and let $O$ denote the origin. 
    We will show that all $e, e', e''$ are divisible by $m_1$ (which will then complete the proof by rescaling). Let us focus on $e$ first. By
    rotating and reflecting $C_{r'}$, we can assume that $P_1'$ lies on the
    positive $x$-axis, and that
    $$0<\measuredangle P_1'OP_2'=: 2\alpha\leq \pi,$$
    see Figure \ref{fig:lem52}. 
    \begin{figure}
    \centering
\centerline{\includegraphics[height=2.7in]{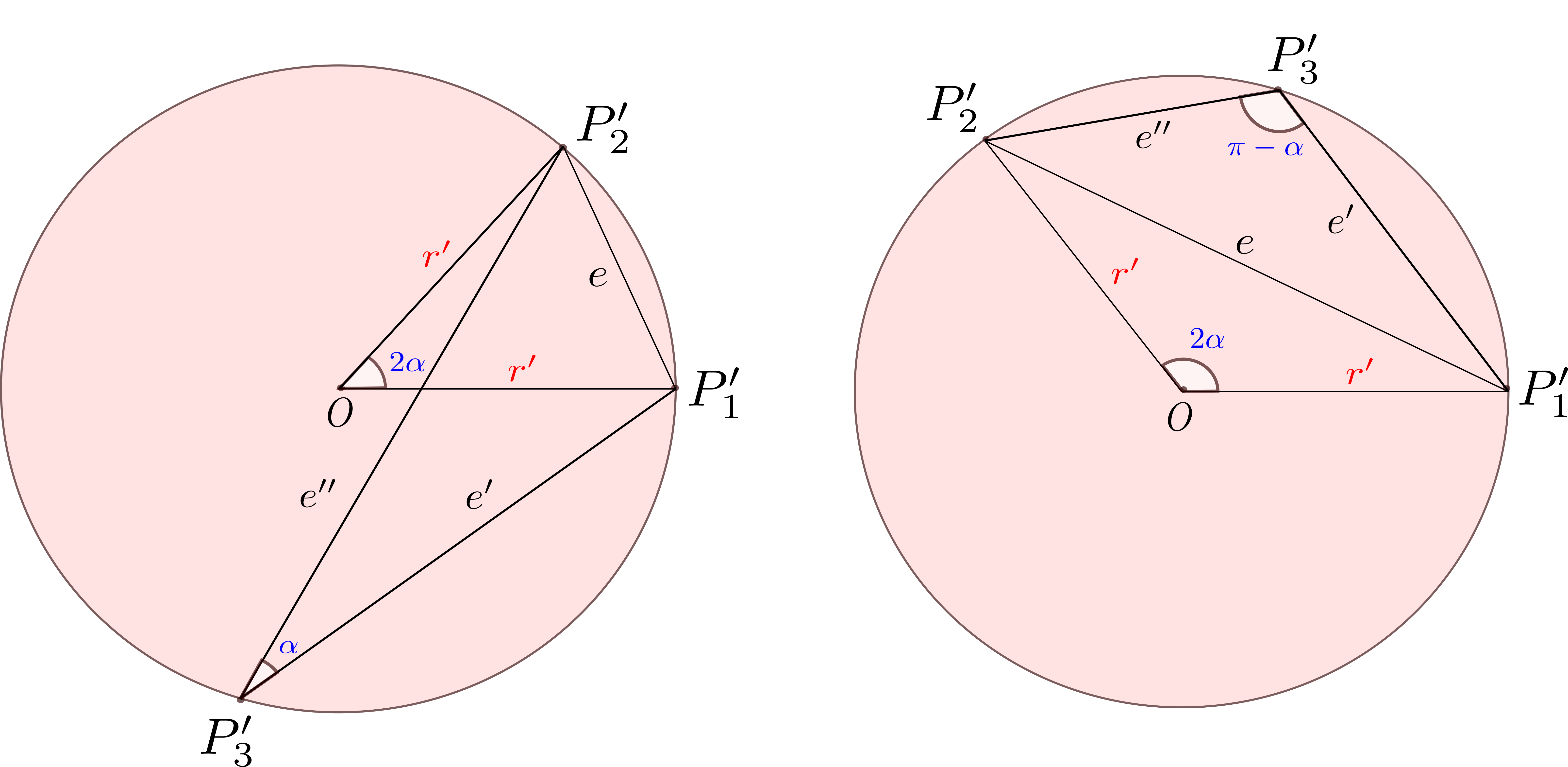}}
    \caption{The side lengths $e,e',e''$ of the triangle with vertices $P_1', P_2', P_3'$ on the circle $C_{r'}$ are integers.}
    \label{fig:lem52}
\end{figure}
If $\measuredangle P_1'OP_2'=\pi$, then $2r'=e\in\mathbf{N}$, which means that $m_1=m_2=1$ and there is nothing to prove. So, suppose that $2\alpha<\pi$. Then $\measuredangle P_1'P_3'P_2'\in\{\alpha,\pi-\alpha\}$, and
    hence, by the law of cosines, since all of
    the side lengths $e,e',e''$ of the triangle with vertices $P_1',P_2',P_3'$ are positive
    integers, we deduce that $\cos\measuredangle P_1'P_3'P_2'$ is rational. In addition, from the law of sines in the same triangle, we have that
    \begin{equation*}
    \sin \measuredangle P_1'P_3'P_2' =\sin \alpha =\frac{e}{2r'}= \frac{e\sqrt{m_2}}{k\sqrt{m_1}}.
    \end{equation*}
    Now, as
    \begin{equation*}
       \cos\measuredangle P_1'P_3'P_2' \in\left\{\pm\sqrt{1-\sin^2\alpha}\right\}=\left\{\pm\frac{\sqrt{k^2m_1-e^2m_2}}{k\sqrt{m_1}}\right\}
     \end{equation*}
     is rational and $0<\alpha<\frac{\pi}{2}$, we deduce that there must exist $m,t\in \mathbf{N}$ with $(m,t)=1$ such that
    \begin{equation*}
         \sqrt{k^2m_1-e^2m_2}=\frac{m}{t}\sqrt{m_1}.
     \end{equation*}
      Thus, $t^2e^2m_2=t^2k^2m_1-m^2m_1$, which implies that $t^2$ divides $m_1$. Since $m_1$ is squarefree, it follows that $t=1$, and so  $e^2m_2=k^2m_1-m^2m_1$. Therefore,  $e$ is divisible by $m_1$, since $m_1$ is
     squarefree and $(m_1,m_2)=1$.
     
     Now by rotating and reflecting $C_{r'}$ again, we can assume that $P'_3$ (respectively, $P_2'$) lies on the positive $x$-axis and that $0<\measuredangle  P_3' 
O P_1'=:2\alpha'\leq \pi$ (respectively, $0<\measuredangle  P_2' O P_3' =:2\alpha''\leq
\pi$). By applying the same argument as above we obtain that $e'$ (respectively, $e''$)
must also be divisible by $m_1$. Therefore, the distances between the points
     $\frac{1}{m_1}P_1',\frac{1}{m_1}P_2',\frac{1}{m_1}P_3'$ on the circle $C_{\frac{r'}{m_1}}$ are
     also integers.  This gives~\eqref{eq:flip}, as desired.
 \end{proof}

We will now prove Proposition~\ref{prop:main}. When the radius of $C$ has the special form $\frac{n}{2\sqrt{D}}$ with $n,D\in\mathbf{N}$, $D$ squarefree, and $\mathbf{Q}(\sqrt{-D})$ having class number $1$, this result already follows from work of Bat-Ochir~\cite{Bat-Ochir2019}. A minor modification of Bat-Ochir's argument, using a classical result on binary quadratic forms of negative discriminant in place of unique factorization, proves the proposition in general.

 \begin{lemma}\label{lem:circlebound}
     There exists an absolute constant $K>0$ such that, for all $n,D\in\mathbf{N}$ with $(n,D)=1$ and $D$ squarefree, we have
     \begin{equation*}
         c\left(\frac{n}{2\sqrt{D}}\right)\ll n^{\frac{K}{\log\log{n}}}.
     \end{equation*}
 \end{lemma}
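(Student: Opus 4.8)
The plan is to reduce the bound to a count of representations of $n^2$ by a single binary quadratic form, and then to invoke a classical estimate for such representation numbers. We may assume $c(n/(2\sqrt{D}))\geq 3$, since otherwise the asserted bound is trivial for $n$ large (the finitely many small $n$ being absorbed into the implied constant). So let $S$ be an integer distance set of size $\kappa:=|S|\geq 3$ lying on the circle $C_r$ of radius $r=n/(2\sqrt{D})$ centred at the origin. Fix a point $P_1\in S$ and rotate so that $P_1=(r,0)$; for each $P=(x,y)\in S\setminus\{P_1\}$ put $d_P:=\|P_1-P\|\in\mathbf{N}$. From $\|P_1-P\|^2=2r^2(1-x/r)$ and $r^2=n^2/(4D)$ one gets $x/r=(n^2-2Dd_P^2)/n^2$, so that $P$ is determined by $d_P$ together with the sign of $y$; hence $\kappa\leq 1+2\,|\{d_P:P\in S\setminus\{P_1\}\}|$.

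Next I would extract the quadratic form. Note that $y^2=r^2(1-(x/r)^2)=d_P^2(n^2-Dd_P^2)/n^2$, which is rational; similarly, for $P=(x,y),P'=(x',y')\in S\setminus\{P_1\}$ the quantity $(x-x')^2=D(d_{P'}^2-d_P^2)^2/n^2$ is rational. Since $\|P-P'\|^2=(x-x')^2+y^2+y'^2-2yy'$ is an integer, it follows that $yy'\in\mathbf{Q}$, and therefore that $(n^2-Dd_P^2)(n^2-Dd_{P'}^2)$ is a perfect square for all such $P,P'$. Discarding the at most one point of $S$ antipodal to $P_1$, all the positive integers $n^2-Dd_P^2$ then have the same squarefree part $s\geq 1$, so that $Dd_P^2+sh_P^2=n^2$ for some $h_P\in\mathbf{N}$. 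Since $\kappa\geq 3$, at least one such $P$ exists, whence $D\leq Dd_P^2\leq n^2$ and $s\leq n^2-Dd_P^2\leq n^2$; moreover $(n,D)=1$ forces $\gcd(D,s)=1$. Thus $Q(X,Y):=DX^2+sY^2$ is a primitive positive definite binary quadratic form of discriminant $\Delta:=-4Ds$ with $|\Delta|\leq 4n^4$.

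Distinct values of $d_P$ give distinct solutions $(d_P,h_P)\in\mathbf{N}^2$ of $Q(X,Y)=n^2$, so $\kappa\ll 1+r_Q(n^2)$, where $r_Q(n^2)$ denotes the number of representations of $n^2$ by $Q$. I would then apply the classical fact that the number of representations of an integer $N$ by a primitive positive definite binary quadratic form of discriminant $\Delta$ is $\ll\tau(N)\,\tau(|\Delta|)^{O(1)}$. (This follows, for instance, from the identification of primitive representations with primitive ideals of norm $N$ in the quadratic order of discriminant $\Delta$: summed over all form classes of the given discriminant, the count of primitive representations of $N$ is controlled by Gauss's formula together with the bound $\sum_{\delta\mid N}\bigl(\tfrac{\Delta}{\delta}\bigr)\leq\tau(N)$, and the non-maximal primes contribute only a factor $\tau(|\Delta|)^{O(1)}$.) With $N=n^2$ and $|\Delta|\leq 4n^4$, the standard estimate $\tau(m)\ll m^{O(1/\log\log m)}$ yields $\tau(n^2)\ll n^{O(1/\log\log n)}$ and $\tau(|\Delta|)^{O(1)}\ll n^{O(1/\log\log n)}$, so that $\kappa\ll n^{O(1/\log\log n)}$, as required.

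The delicate point is this last step: the representation-number bound must depend on $\Delta$ only through a divisor-function-type factor, since a dependence of the shape $|\Delta|^{\epsilon}$ would be fatal once $|\Delta|$ is as large as $n^4$. This is exactly where a classical result on binary quadratic forms of negative discriminant takes the place of Bat-Ochir's appeal to unique factorization in the class-number-one case. The remaining work --- keeping track of the point antipodal to $P_1$, the degenerate case $s=0$ (in which every point of $S\setminus\{P_1\}$ is antipodal to $P_1$, so $\kappa\leq 2$), and small values of $n$ --- is routine bookkeeping.
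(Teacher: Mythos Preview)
Your proof is correct, but the route differs from the paper's in an instructive way.

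Both arguments parametrise points $P\neq P_1$ on $C_r$ by the integer distance $d_P=\|P_1-P\|$ and reduce to counting representations of $n^2$ (or a divisor of it) by a binary quadratic form. The difference lies in how much is extracted from the integrality of the mutual distances. You use only the rationality of $yy'$, which yields that the integers $n^2-Dd_P^2$ share a common squarefree part $s$, and hence that each $(d_P,h_P)$ solves $DX^2+sY^2=n^2$ for a form of discriminant $-4Ds$ depending on $S$ and possibly as large as $4n^4$; this forces you to control the discriminant dependence in the representation bound, which you correctly isolate as the delicate point. The paper instead applies the law of cosines in the triangle $P_0PP'$ (all three sides integral) together with the inscribed-angle identity $\angle P_0P'P\in\{\alpha,\pi-\alpha\}$ to obtain $\cos\alpha\in\mathbf{Q}$, which is equivalent to $y\in\mathbf{Q}$ itself. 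In your notation this shows directly that $s=1$, so the form is simply $x^2+Dy^2=n^2$; the paper then further splits $n=n_1n_2$ by the Legendre symbol $\left(\frac{-D}{p}\right)$ to force $n_1\mid x,y$ and reduce to $x^2+Dy^2=n_2^2$, after which the cited bound $\ll\tau(n_2^2)$ is immediate.

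In short: the paper proves a sharper intermediate fact (each $n^2-Dd_P^2$ is individually a square, not merely that their pairwise products are), which lets it invoke the representation bound for the fixed principal form of discriminant $-4D$ without needing uniformity in a second parameter. Your argument compensates for the weaker intermediate step by appealing to the uniform bound $r_Q(N)\ll \tau(N)^{O(1)}$ for primitive positive definite forms (here $Ds$ is squarefree, so the associated order has conductor at most $2$ and the bound is standard). Both approaches land on the same $n^{O(1/\log\log n)}$ estimate.
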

\begin{proof}
    Using that $(n,D)=1$, we factor $n=n_1n_2$, where
    \begin{equation*}
        p\mid n_1\implies\left(\frac{-D}{p}\right)=-1\qquad\text{and}\qquad p\mid n_2\implies\left(\frac{-D}{p}\right)=1.
    \end{equation*}
    Let $C_r\subset\mathbf{R}^2$ denote the circle of radius $r:=\frac{n}{2\sqrt{D}}$ centered at the origin $O$, and suppose that $S\subset C_r$ is an integer distance set of maximal size $c(r)$, where we may assume, without loss of generality, that $c(r)\geq 3$. We may also assume, without loss of generality, that $S$ contains a point, $P_0$, that lies on the $x$-axis. Let $P,P'\in S$ be any two other distinct points, and set $2\alpha:=\measuredangle P_0OP$ and $a:=\|P-P_0\|\in\mathbf{Z}$. By reflecting about the $x$-axis, if needed, we may assume that $0< 2\alpha \leq \pi$. Then, as in the proof of Lemma~\ref{lem:specialradius}, $\cos{\measuredangle P_0P'P}$ must be rational by the law of cosines, since $\measuredangle P_0P'P\in\{\alpha,\pi-\alpha\}$ and the side lengths of the triangle with vertices $P_0,P',P$ are all integers.  On the other hand, by the law of sines, $\sin\measuredangle P_0P'P=\frac{a}{2r}$, so that  $$\cos{\measuredangle P_0P'P}\in\left\{\pm\frac{\sqrt{4r^2-a^2}}{2r}\right\}=\left\{\pm \frac{\sqrt{n^2-Da^2}}{n}\right\}$$  is rational. Thus, $\sqrt{n^2-Da^2}$ must be rational, and thus an integer, say $\sqrt{n^2-Da^2}=m\in\mathbf{Z}$. Squaring and rearranging to get
    \begin{equation*}
        -Da^2=m^2-n^2=m^2-(n_1n_2)^2,
    \end{equation*}
    it follows that both $a$ and $m$ are divisible by $n_1$, for otherwise $-D$ would be a quadratic residue modulo some prime dividing $n_1$, contradicting the definition of $n_1$. Setting $x=\frac{m}{n_1}$ and $y=\frac{a}{n_1}$, so that $x,y\in\mathbf{Z}$, we thus get
    \begin{equation*}
        x^2+Dy^2=n_2^2,\qquad \sin{\measuredangle P_0P'P}=\frac{y\sqrt{D}}{n_2},\qquad\text{and}\qquad\cos{\measuredangle P_0P'P}\in\left\{\pm\frac{x}{n_2}\right\},
    \end{equation*}
    from which it follows that, since the point $P$ is completely determined by the angle $2\alpha$ and the radius $r$, we must have
    \begin{equation*}
        |S|\leq \left|\left\{(x,y)\in\mathbf{Z}^2:x^2+Dy^2=n_2^2\right\}\right|.
    \end{equation*}
    The right-hand side above is well-known to be $\ll\tau(n_2^2)$ (see, for example,~\cite[Chapter 11]{Iwaniec1997}). Since $$\tau(n_2^2)\ll (n_2^2)^{O(1/\log\log(n_2^2))}\ll n^{O(1/\log\log{n})},$$ this completes the proof of the lemma.
\end{proof}

Combining Lemmas~\ref{lem:specialradius} and~\ref{lem:circlebound} now proves Proposition~\ref{prop:mainalt}, and thus Proposition~\ref{prop:main}.

\section{Conclusion of the argument}\label{sec:end}

We will need two final preparatory lemmas to prove Theorem~\ref{thm:main}.

\begin{lemma}\label{lem:projection}
    Let $\mathcal{C}_k\subset\mathbf{C}^{k+2}$ be an affine curve of degree $d$, and let $\pi:\mathbf{C}^{k+2}\to\mathbf{C}^{2}$ denote the projection map $\pi(x,y,d_1,\dots,d_k)=(x,y)$. Then $\pi(\mathcal{C}_k)$ is contained in the union of at most $d$ irreducible affine curves of degree at most $d$.
\end{lemma}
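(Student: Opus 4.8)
The plan is to split $\mathcal{C}_k$ into irreducible components and to bound the degree of the projection of each component using the characterisation of degree recorded in Lemma~\ref{lem:deglow}. First I would write $\mathcal{C}_k=Z_1\cup\dots\cup Z_\ell$ as the union of its irreducible components, each of which is an irreducible affine curve. Since the degree of a (reduced) pure-dimensional projective variety is the sum of the degrees of its irreducible components, $\sum_{i=1}^\ell\deg\overline{Z_i}=\deg\overline{\mathcal{C}_k}=d$, so $\ell\le d$ and $\deg Z_i:=\deg\overline{Z_i}\le d$ for every $i$. It therefore suffices to prove that, for each $i$, the set $\pi(Z_i)$ is contained in a single irreducible affine curve of degree at most $\deg Z_i$; summing over $i$ then exhibits $\pi(\mathcal{C}_k)=\bigcup_i\pi(Z_i)$ as a subset of a union of at most $d$ irreducible affine curves, each of degree at most $d$.

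Fix such a component $Z=Z_i$. If $\pi|_Z$ is constant then $\pi(Z)$ is a single point, which lies on a line, and we are done; so assume $\pi|_Z$ is nonconstant, so that $W:=\overline{\pi(Z)}$, the Zariski closure in $\mathbf{C}^2$, is an irreducible affine plane curve, say of degree $e$, and it remains to show $e\le\deg Z$. I would do this by choosing a sufficiently generic affine line $\ell_0=\{ax+by=c\}\subset\mathbf{C}^2$. Because the projective closure of $W$ is a plane curve of degree $e$, a generic line meets it in exactly $e$ points, all of which lie in the affine chart $\mathbf{C}^2$, hence in $W$; and since $\pi(Z)$ is constructible and dense in $W$, it omits only finitely many points of $W$, so for generic $\ell_0$ all $e$ of these points in fact lie in $\pi(Z)$. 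Lifting them to $Z$ produces $e$ distinct points of $Z\cap\pi^{-1}(\ell_0)$. Now $\pi^{-1}(\ell_0)\subset\mathbf{C}^{k+2}$ is an affine hyperplane, and its projective closure $H=\{ax+by=cz\}\subset\mathbf{P}^{k+2}$ is a linear subvariety of codimension $1$. For generic $(a:b:c)$ the curve $\overline{Z}$ is not contained in $H$: if it were for all $(a:b:c)$, then $x$, $y$, and $z$ would all vanish identically on $\overline{Z}$, forcing $Z=\emptyset$, a contradiction; and the set of bad $(a:b:c)$ is a proper linear subspace. Hence $\overline{Z}\cap H$ is a proper closed subset of the irreducible curve $\overline{Z}$, so finite, and it contains the $e$ points constructed above. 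Applying Lemma~\ref{lem:deglow} to $\overline{Z}$ (of dimension $1$) and the codimension-$1$ linear subvariety $H$ gives $\deg Z=\deg\overline{Z}\ge|\overline{Z}\cap H|\ge e$, as required.

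The point requiring the most care is the bookkeeping behind the word ``generic'': I must choose a single line $\ell_0$ for which, simultaneously, its projective closure meets $\overline{W}$ in $e$ distinct points lying in the affine chart, those points avoid the finite set $W\setminus\pi(Z)$, and $\overline{Z}\not\subseteq H$. Each of these is an open dense condition on $\ell_0$ in the (two-parameter) space of affine lines, so a generic choice satisfies all three; the only one that is not completely routine is $\overline{Z}\not\subseteq H$, which is disposed of by the linear-algebra observation above. Everything else is a direct appeal to Lemma~\ref{lem:deglow}, together with the standard facts that a generic line meets a degree-$e$ plane curve in $e$ distinct points and that a dense constructible subset of an irreducible curve omits only finitely many points. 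The degenerate possibility that $\mathcal{C}_k$ is not of pure dimension one is harmless, since any isolated points of $\mathcal{C}_k$ project to points, each of which lies on a line.
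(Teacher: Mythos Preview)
Your proof is correct and follows essentially the same strategy as the paper: bound the degree of the projection by intersecting with (the preimage of) a generic affine line in $\mathbf{C}^2$ and invoking Lemma~\ref{lem:deglow}. The only organisational difference is that you first decompose $\mathcal{C}_k$ into irreducible components and bound each $\deg\overline{\pi(Z_i)}$ by $\deg Z_i$, whereas the paper works globally, using Chevalley's theorem to see that $\overline{\pi(\mathcal{C}_k)}$ is a finite union of curves and points and then bounding its total degree by $d$ via the same generic-line count.
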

\begin{proof}
By Chevalley's theorem (see, for example, Exercise 3.19 of~\cite[Chapter I]{Hartshorne77}), there exist a finite number of affine varieties $V_1,V_1',\dots,V_n,V'_n\subset\mathbf{C}^2$ such that
\begin{equation*}
    \pi(\mathcal{C}_k)=\bigcup_{i=1}^n\left( V_i \setminus V_i'\right).
\end{equation*}
Note that we must have $\dim V_i\leq 1$ for all $i=1,\dots,n$, for otherwise we could find a line $L\subset\mathbf{C}^2$ that intersects $\pi(\mathcal{C}_k)$ in infinitely many points for which the hyperplane $\pi^{-1}(L)=L\times\mathbf{C}^{k}$ does not contain $\mathcal{C}_k$, which yields a contradiction since then $\pi^{-1}(L)\cap\mathcal{C}_k$ would consist of finitely many points while its projection under $\pi$ would consist of infinitely many points.

Thus, $\pi(\mathcal{C}_k)$ can be written as the union of a finite number of irreducible curves $C_1,\dots,C_{s}\subset\mathbf{C}^2$ and a finite number of points $z_1,\dots,z_{r}\in\mathbf{C}^2$, minus a finite number of points $w_{1},\dots,w_{t}\in\mathbf{C}^2$ that are distinct from $z_1,\dots,z_r$. A generic line $L\subset\mathbf{C}^2$ avoids $w_1,\dots,w_t$ and is distinct from any of the curves $C_1,\dots,C_{s}$ that happen to be a line, and thus intersects $\overline{\pi(\mathcal{C}_k)}$ in at most $|\overline{\mathcal{C}_k}\cap\overline{\pi^{-1}(L)}|\leq d$ points. Therefore, $\deg{\overline{\pi(\mathcal{C}_k)}}\leq d$, from which the conclusion of the lemma follows.
\end{proof}

\begin{lemma}\label{lem:fieldofdef}
    Let $C\subset\mathbf{C}^2$ be an irreducible affine curve of degree $d$ and $m\in\mathbf{N}$ be squarefree. Suppose that $C$ contains at least $d^2+1$ points in $\left\{(x,y\sqrt{m}):x,y\in\mathbf{Q}\right\}$. Then
    \begin{equation*}
        C'=\left\{(x,y)\in\mathbf{C}^2:(x,y\sqrt{m})\in C\right\}
    \end{equation*}
    is an irreducible affine curve of degree $d$ defined over $\mathbf{Q}$.
\end{lemma}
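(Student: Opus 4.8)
The plan is to use the $d^2+1$ rational points to pin down a defining polynomial of $C'$ up to scaling via B\'ezout's inequality, and then to descend the remaining scalar to $\mathbf{Q}$ by a linear algebra argument.

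First I would dispatch the part of the statement not involving $\mathbf{Q}$. Since $\mathbf{C}[x,y]$ is a UFD and $I(C)$ is a prime ideal of height one, we may write $C=V(f)$ for an irreducible $f\in\mathbf{C}[x,y]$ with $\deg f=\deg C=d$. As $(x,y)\mapsto(x,\sqrt{m}\,y)$ is a linear automorphism of $\mathbf{C}^2$, the polynomial $g(x,y):=f(x,\sqrt{m}\,y)$ is again irreducible of degree $d$, and it cuts out $C'$ exactly, since $g(x,y)=0\iff f(x,\sqrt{m}\,y)=0\iff (x,\sqrt{m}\,y)\in C$. Hence $C'$ is, over $\mathbf{C}$, an irreducible affine curve of degree $d$, and the whole content of the lemma is the assertion that $C'$ is defined over $\mathbf{Q}$, i.e.\ that $g$ is a scalar multiple of some polynomial in $\mathbf{Q}[x,y]$.

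Next I would exploit the hypothesis. The $d^2+1$ points of $C$ lying in $\{(x,y\sqrt{m}):x,y\in\mathbf{Q}\}$ correspond, under the injective map $(x,y\sqrt{m})\mapsto(x,y)$, to $d^2+1$ distinct points $R_1,\dots,R_{d^2+1}\in\mathbf{Q}^2$ lying on $C'$. Let $W$ be the $\mathbf{C}$-vector space of polynomials of total degree at most $d$ in $\mathbf{C}[x,y]$, and let $W_0\subseteq W$ be the subspace of those vanishing at all of $R_1,\dots,R_{d^2+1}$. Because evaluation at a rational point is a linear functional whose matrix entries (in the monomial coordinates on $W$) are rational, $W_0$ is defined over $\mathbf{Q}$, meaning $W_0=(W_0\cap\mathbf{Q}[x,y])\otimes_{\mathbf{Q}}\mathbf{C}$. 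The crucial point is that $\dim_{\mathbf{C}}W_0=1$: we have $g\in W_0$, so $\dim_{\mathbf{C}}W_0\geq 1$; conversely, if $0\neq h\in W_0$, then $g$ and $h$ both vanish at $d^2+1$ points, and since $d^2+1>d^2\geq(\deg g)(\deg h)$, B\'ezout's inequality (Lemma~\ref{lem:Bezout}, applied to the projective closures of $V(g)$ and $V(h)$) forces $g$ and $h$ to have a common factor; as $g$ is irreducible and $\deg h\leq d=\deg g$, this means $h$ is a scalar multiple of $g$. Thus $W_0=\mathbf{C}g$.

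Finally, since $W_0$ is one-dimensional and defined over $\mathbf{Q}$, it is spanned by some nonzero $\tilde{g}\in\mathbf{Q}[x,y]$, which, being proportional to $g$, is irreducible of degree exactly $d$ and satisfies $C'=V(\tilde g)$. Therefore $C'$ is an irreducible affine curve of degree $d$ defined over $\mathbf{Q}$, as claimed. I do not expect a genuine obstacle; the only steps requiring care are the B\'ezout argument showing that the defining polynomial is unique up to scaling once one imposes vanishing at more than $d^2$ points, and the verification that the $d^2+1$ points of $C$ really do descend to $d^2+1$ distinct rational points of $C'$ (which uses that $\sqrt{m}\neq 0$).
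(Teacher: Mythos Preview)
Your proof is correct and follows essentially the same approach as the paper: both write $C'=V(g)$ with $g(x,y)=f(x,\sqrt{m}\,y)$ irreducible of degree $d$, consider the linear system of degree-$\le d$ polynomials vanishing at the $d^2+1$ rational points, use that this system has rational coefficients, and apply B\'ezout to conclude that any nonzero solution is proportional to $g$. The only cosmetic difference is the order of steps: you first prove $\dim W_0=1$ and then descend the scalar, whereas the paper first extracts a rational solution $Q$ and then applies B\'ezout once to $g$ and $Q$.
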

\begin{proof}
    Since the curve $C$ is irreducible of degree $d$, it is the zero set of some
    irreducible polynomial $f\in\mathbf{C}[x,y]$ of degree $d$. Then, $C'$ is the zero set
    of the polynomial $g(x,y):=f(x,y\sqrt{m})$, which is also irreducible of degree
    $d$. We will show that there exists a non-zero $\alpha\in\mathbf{C}$ such that $\alpha
    g\in\mathbf{Q}[x,y]$. It will then follow that $C'$ is an irreducible affine curve of
    degree $d$ defined over $\mathbf{Q}$, since $\alpha g$ is also irreducible, has degree $d$, and has the same zero set as $g$ in $\mathbf{C}^2$.     
   
    So, fix a set $\mathcal{P}$ of $n:=d^2+1$  rational points $z_1,\ldots,z_{n}$ on $C'$. Consider the general polynomial
    \begin{equation*}
        Q(x,y)=\sum_{(a,b)\in\mathbf{N}^2:\; a+b\leq d}\;c_{(a,b)} x^{a}y^{b}\in\mathbf{C}[x,y]
    \end{equation*}
    of degree at most $d$ subject to the conditions
    \begin{equation}\label{eq: linear system}
        Q(z_1)=\cdots=Q(z_{n})=0.
    \end{equation}
    Note that \eqref{eq: linear system} is a linear system of $n$ equations and
    $\binom{d+2}{2}$ unknowns, and thus of the form $Mv=0\in\mathbf{C}^{n}$ for a certain
    $n\times \binom{d+2}{2}$ matrix $M$ with rational entries. Since the non-zero
    polynomial $Q=g$ has degree at most $d$ and vanishes on
    $\mathcal{P}\subset C'$, the system \eqref{eq: linear system} has a non-trivial
    solution in $\mathbf{C}^{\binom{d+2}{2}}$. Equivalently, the linear map
    $T_{\mathbf{C}}:\mathbf{C}^{\binom{d+2}{2}}\rightarrow\mathbf{C}^{n}$ with
    $T_{\mathbf{C}}v=Mv$ has $\dim \ker T_{\mathbf{C}}\geq 1$. Since $M$ has rational entries, the map $T_{\mathbf{Q}}:\mathbf{Q}^{\binom{d+2}{2}}\rightarrow\mathbf{Q}^m$ with $T_{\mathbf{Q}}v=Mv$ is well-defined, and also has $\dim\ker T_\mathbf{Q}=\dim\ker T_\mathbf{C}\geq 1$. Indeed, the rank of $M$ is the largest order of a non-vanishing minor of $M$, and is thus independent of whether $M$ is viewed over $\mathbf{C}$ or over $\mathbf{Q}$.
    
    By the above, the linear system $Mv=0$ in $\mathbf{Q}^{n}$ has a non-trivial solution in
    $\mathbf{Q}^{\binom{d+2}{2}}$, i.e. there exists a non-zero polynomial
    $Q\in\mathbf{Q}[x,y]$, of degree at most $d$, vanishing on $\mathcal{P}$. Both $g$ and
    $Q$ vanish on $\mathcal{P}$, and thus on at least $d^2+1\geq \deg{Q}\cdot\deg{g}+1$
    points of $\mathbf{C}^2$. By B\'ezout's theorem, $g$ and $Q$ have a common
    factor. Since $g$ is irreducible, $g$ divides $Q$; and since $\deg Q\leq d=\deg g$, we
    conclude that $Q$ is a non-zero, constant multiple of $g$.
\end{proof}

Now we are ready to prove Theorem~\ref{thm:main}, from which Corollaries~\ref{cor:mainno4}
and~\ref{cor:mainno3} immediately follow (the latter by invoking Propositions~\ref{prop:lines} and~\ref{prop:main}). 

\begin{proof}[Proof od Theorem~\ref{thm:main}]
  We follow the outline in Section~\ref{sec:setup}. We will assume that $N$ is larger than any specified absolute constant, since the results are trivial for bounded $N$. Let $S\subset[-N,N]^2$ be an integer distance set and let $k=k(N)\in\mathbf{N}$, with $k(N)\to\infty$ as
  $N\to\infty$ and $k(N)\ll\log\log{N}$, be a parameter to be chosen later. We may assume,
  without loss of generality, that $S$ contains the origin and at least $k$
  other points, and, further, that there exists an $M\in\mathbf{N}$ and a squarefree $m\in\mathbf{N}$ such that
  \begin{equation*}
    S\subset \left\{(x,y\sqrt{m}):x,y\in\frac{1}{2M}\mathbf{Z}\right\},
  \end{equation*}
  where $M\ll N$. Pick $k$ distinct points $\tilde{P}_1,\dots,\tilde{P}_k\in S$, none of which is the
  origin, so that each $\tilde{P}_j=(a_j,\sqrt{m}b_j)$ with $a_j,b_j\in\mathbf{Q}$. Set $P_j=(a_j,b_j)$ for $j=1,\dots,k$. Using the notation of Section~\ref{sec:ag}, consider the affine variety
  \begin{equation*}
    X_k:=\left\{(x,y,d_1,\dots,d_k):Q_{m,P_j}(x,y,d_j)=0\text{ for }j=1,\dots,k\right\}
  \end{equation*}
  in $\mathbf{C}^{k+2}$. Each point $\tilde{P}=(x,y\sqrt{m})\in S$ corresponds to the rational
  point 
  \begin{equation*}
    \left[x:y:1:\|\tilde{P}-\tilde{P}_1\|:\cdots:\|\tilde{P}-\tilde{P}_k\|\right]\in \overline{X_k}  
  \end{equation*}
    of height $\ll N^2$. By
  Lemma~\ref{lem:generalX}, $\overline{X_k}$ is an irreducible surface of degree
  $2^k$. Thus, it follows from Theorem~\ref{thm:surfacecount} and
  Lemma~\ref{lem:projection} that $S$ is contained in the union of $\tau\ll
  e^{O(k)}N^{3/2^{k/2+1}}$ irreducible affine curves $C_1,\dots,C_{\tau}$, each of degree at most $\sigma\ll
  e^{O(k)}N^{3/2^{k/2+1}}$.

  For each $j=1,\dots,\tau$, either $C_j$ contains at least $(\deg{C_j})^2+1$ points in $S$, in which
  case $\left\{(x,y)\in\mathbf{C}^2:(x,y\sqrt{m})\in C_j\right\}$ is defined over $\mathbf{Q}$ by Lemma~\ref{lem:fieldofdef}, or else $S\cap C_j$
  contains at most $(\deg C_j)^2\ll e^{O(k)}N^{3/2^{k/2}}$ points. It follows that all but $\ll e^{O(k)}N^{5/2^{k/2}}$ points of $S$ are contained in the union of $\tau'\ll e^{O(k)}N^{3/2^{k/2+1}}$
  irreducible affine curves $\tilde{C}_1,\dots,\tilde{C}_{\tau'}$ of degree at most $\sigma$, where each
  \begin{equation*}
    \tilde{C}_j':=\left\{(x,y)\in\mathbf{C}^2:(x,y\sqrt{m})\in \tilde{C}_j\right\}
  \end{equation*}
  is defined over $\mathbf{Q}$.

  Let $1\leq j\leq \tau'$. Assume first that $\tilde{C}_j$ is not a line or
  circle. We will now bound $|S\cap\tilde{C}_j|$. We may assume, without loss of
  generality (by applying a translation), that $S\cap\tilde{C}_j$ contains the origin
  and that the origin is not a singular point of $\tilde{C}'_j$. Recall
  that an irreducible affine curve in $\mathbf{C}^2$ of degree $d$ can have at most
  $\frac{(d-1)(d-2)}{2}$ singular points (see, for example, Exercise 3 of~\cite[Chapter 3
  \S 2]{Shafarevich2013}). Denoting the set of singular points of $\tilde{C}_j'$ by $\mathcal{S}$,
  either $S\cap\tilde{C}_j$ contains at most
  \begin{equation}\label{eq:pointnumber}
   2+k(k-1)(\deg{\tilde{C}_j})^2+3\deg{\tilde{C}_j}(\deg{\tilde{C}_j}-2)
  \end{equation}
  nonsingular points of $\tilde{C}_j$, or else there exist $k$ distinct points
  \begin{equation*}
  \tilde{P}_1'=(a_1',\sqrt{m}b_1'),\dots, \tilde{P}_k'=(a_k',\sqrt{m}b_k')\in S\cap\tilde{C}_j,  
  \end{equation*}
  none of which is the origin or a singular point of $\tilde{C}_j$, such that
  \begin{equation*}
    \left\{\left(\frac{a_{n}'+a_{n'}'}{2}+i\sqrt{m}\frac{b_n'-b_{n'}'}{2},\frac{b_n'+b_{n'}'}{2}-i\frac{a_n'-a_{n'}'}{2\sqrt{m}}\right):1\leq
    n\neq n'\leq k\right\}\cap \tilde{C}'_j=\emptyset
  \end{equation*}
  and
  \begin{equation}\label{eq:pmtwice}
        \left\{s\pm i\sqrt{m}t:(s,t)\in\mathcal{S}\right\}\cap\left\{a'_n\pm i\sqrt{m} b'_n:n=1,\dots,k\right\}=\emptyset  
      \end{equation}
 and, if $\deg{\tilde{C}_j'}\geq 3$,
  \begin{equation}\label{eq:Rpm}
        \left\{(a_n,b_n):n=1,\dots,k\right\}\cap \bigcap_{l=0}^{\deg{\tilde{C}_j'}-2}V(\tilde{R}_{l,+},\tilde{R}_{l,-}) =\emptyset,
      \end{equation} where the $\tilde{R}_{l,\pm}$ are as in Lemma \ref{lem:generalC}.
      Indeed, the condition~\eqref{eq:pmtwice} forbids at most $$4|\mathcal{S}|\leq 2(\deg{\tilde{C}'_j}-1)(\deg{\tilde{C}'_j}-2)<2\deg{\tilde{C}'_j(\deg{\tilde{C}'_j}-2)}$$ points, and, if $\deg{\tilde{C}_j'}\geq 3$,  the condition \eqref{eq:Rpm} excludes at most $\deg{\tilde{C}'_j}(\deg{\tilde{C}_j'}-2)$ points. The reason for the latter is that $\bigcap_{l=0}^{\deg{\tilde{C}_j'}-2}V(\tilde{R}_{l,+},\tilde{R}_{l,-})$ is contained in a curve of degree at most $\deg{\tilde{C}_j'}-2$ (by Lemma~\ref{lem:Rpolys}), which thus has no irreducible components that agree with the irreducible curve $\tilde{C}_j'$; therefore, the intersection of $\tilde{C}_j'$ with $\bigcap_{l=0}^{\deg{\tilde{C}_j'}-2}V(\tilde{R}_{l,+},\tilde{R}_{l,-})$ can contain at most $\deg{\tilde{C}_j'}(\deg{\tilde{C}_j'}-2)$ points by B\'ezout's inequality. Moreover, if $\tilde{P}_1',\dots,\tilde{P}_{k'}'\in S\cap\tilde{C}_j$, with $k'<k$, have already been chosen so that
  \begin{equation*}
    \left\{\left(\frac{a_n'+a_{n'}'}{2}+i\sqrt{m}\frac{b_n'-b_{n'}'}{2},\frac{b_n'+b_{n'}'}{2}-i\frac{a_n'-a_{n'}'}{2\sqrt{m}}\right):1\leq
    n\neq n'\leq k'\right\}\cap \tilde{C}'_j=\emptyset,
  \end{equation*}
  then there are at most $2k'(\deg{\tilde{C}'_j})^2$ points $(a,\sqrt{m}b)\in \tilde{C}_j$ for which
  \begin{equation*}
    \left\{\left(\frac{a_n'+a}{2}\pm
        i\sqrt{m}\frac{b_n'-b}{2},\frac{b_n'+b}{2}\mp i\frac{a_n'-a}{2\sqrt{m}}\right):1\leq
      n\leq k'\right\}\cap \tilde{C}'_j\neq\emptyset.
  \end{equation*}
  This is because, for all such points, $(a,b)$ lies on one of the intersections
  \begin{equation*}
    \left\{(a,b)\in\mathbf{C}^2:\left(\frac{a_n'+a}{2}\pm
        i\sqrt{m}\frac{b_n'-b}{2},\frac{b_n'+b}{2}\mp i\frac{a_n'-a}{2\sqrt{m}}\right)\in\tilde{C}_j'\right\}\cap \tilde{C}'_j,\qquad n=1,\dots,k',  
  \end{equation*}
    each of which consists exactly of the points $(a,b)\in\tilde{C}_j'$ with $T_{\pm}(a,b)\in\tilde{C}'_j+c_{n,\pm}$, where $T_{\pm}$ is a linear transformation that maps $\mathbf{C}^2$ to the line $\ell_{\pm}=V(x\pm i\sqrt{m})$ and $c_{n,\pm}\in\mathbf{C}^2$. For each choice of $1\leq n\leq k'$ and sign $+$ or $-$, there are at most $\deg \tilde{C}_j'$ points $z$ in $\ell_{\pm}\cap (\tilde{C}_j'+c_{n,\pm})$ by B\'ezout's inequality 
     (Lemma \ref{lem:Bezout}), and, for each such $z$, there are at most $\deg\tilde{C}_j'$ points $(a,b)\in \tilde{C}_j'$ with $T_{\pm}(a,b)=z$, again by B\'ezout's inequality (since $T_{\pm}(a,b)=z$ gives the equation of a line in the variables $a$ and $b$, while $\tilde{C}_j'$ is not a line). Therefore, each of the above intersections can contain at most $(\deg{\tilde{C}'_j})^2$ points; by summing
  $$\sum_{k'=1}^{k-1}2k'(\deg{\tilde{C}'_j})^2=k(k-1)(\deg{\tilde{C}'_j})^2,$$ we conclude that as long as there are at least~\eqref{eq:pointnumber} nonsingular
  points of $\tilde{C}_j$ in $S\cap \tilde{C}_j$, we can find a suitable choice of $\tilde{P}_1'=(a_1',\sqrt{m}b_1'),\dots,\tilde{P}_k'=(a_k',\sqrt{m}b_k')$. We then set $P'_j=(a_j',b_j')$ for $j=1,\dots,k$ and  consider the affine variety
  \begin{equation*}
    \mathcal{C}_k:=\left\{(x,y,d_1,\dots,d_k):Q_{m,P_n'}(x,y,d_{n})=0\text{ for all }n=1,\dots,k\right\}\cap(\tilde{C}_j'\times\mathbf{C}^k)
  \end{equation*}
  in $\mathbf{C}^{k+2}$. Every point in $S\cap \tilde{C}_j$ corresponds to a rational point on $\overline{\mathcal{C}_k}$ of height $\ll N^2$. By Lemma~\ref{lem:generalC}, $\overline{\mathcal{C}_k}$ is an irreducible
  curve of degree at least $2^k$ and at most $2^k \sigma$, so that, by Theorem~\ref{thm:curvecount},
  $|S\cap\tilde{C}_j|\ll e^{O(k)}N^{O(2^{-k})}$ whenever $S\cap\tilde{C}_j$ contains at least~\eqref{eq:pointnumber} nonsingular points of $\tilde{C}_j$.  If $S\cap\tilde{C}_j$ contains fewer
  than~\eqref{eq:pointnumber} nonsingular points of $\tilde{C}_j$, then, as $N$ was chosen to be sufficiently large,
  $|S\cap\tilde{C}_j|\ll k^2\sigma^2\ll e^{O(k)}N^{O(2^{-k})}$.  Thus, either way, we have that
\begin{equation*}
  \left|S\cap\tilde{C}_j\right|\ll e^{O(k)}N^{O(2^{-k})}
\end{equation*}
whenever $\tilde{C}_j$ is not a line or circle. 

Now assume that $\tilde{C}_j$ is a line or circle. To bound $|S\cap\tilde{C}_j|$, we proceed as in the previous case, with a few modifications. We may again assume, without loss of generality, that $S\cap\tilde{C_j}$ contains the origin. Either $S$ contains at most $2+2k(k-1)$ points outside of $\tilde{C}_j$, or else there exist $k$ distinct points
\begin{equation*}
    \tilde{P}_1'=(a_1',\sqrt{m}b_1'),\dots, \tilde{P}_k'=(a_k',\sqrt{m}b_k')\in S\setminus \tilde{C}_j,
\end{equation*}
none of which is the center of $\tilde C_j$ in the case that $\tilde C_j$ is a circle, such that 
\begin{equation*}
    \left\{\left(\frac{a_{n}'+a_{n'}'}{2}+i\sqrt{m}\frac{b_n'-b_{n'}'}{2},\frac{b_n'+b_{n'}'}{2}-i\frac{a_n'-a_{n'}'}{2\sqrt{m}}\right):1\leq
    n\neq n'\leq k\right\}\cap \tilde{C}'_j=\emptyset.
\end{equation*}
Indeed, if $\tilde{P}_1',\dots,\tilde{P}_{k'}'\in S\setminus \tilde{C}_j$ with $k'<k$ have already been chosen so that
  \begin{equation*}
    \left\{\left(\frac{a_n'+a_{n'}'}{2}+i\sqrt{m}\frac{b_n'-b_{n'}'}{2},\frac{b_n'+b_{n'}'}{2}-i\frac{a_n'-a_{n'}'}{2\sqrt{m}}\right):1\leq
    n\neq n'\leq k'\right\}\cap \tilde{C}'_j=\emptyset,
  \end{equation*}
then there are at most $4k'$ points $(a,\sqrt{m}b)\in S\setminus\tilde{C}_j$ for which
  \begin{equation*}
    \left\{\left(\frac{a_n'+a}{2}\pm
        i\sqrt{m}\frac{b_n'-b}{2},\frac{b_n'+b}{2}\mp i\frac{a_n'-a}{2\sqrt{m}}\right):1\leq
      n\leq k'\right\}\cap \tilde{C}'_j\neq\emptyset,
  \end{equation*}
  since each of the $k'$ sets
\begin{equation*}
    \left\{\left(\frac{a_n'+a}{2}\pm
        i\sqrt{m}\frac{b_n'-b}{2},\frac{b_n'+b}{2}\mp i\frac{a_n'-a}{2\sqrt{m}}\right):\; (a,b)\in \mathbf{C}^2\right\}
\end{equation*}
is the union of at most two lines of the form $V(x\pm i\sqrt{m}y)+c_{n,\pm}$ for some $c_{n,\pm}\in\mathbf{C}^2$; each such set intersects $\tilde{C}_j$ in at most $4$ points (by B\'ezout's inequality, as $\deg{\tilde{C}_j}\leq 2$), while at the same time there can only be one point $(a,b)$ for which $(a,\sqrt{m}b)\in S$ on each of the lines $V(x\pm i\sqrt{m}y)+c_{n,\pm}$  (as each such line contains at most one point in $\mathbf{R}^2$). By summing
  \begin{equation*}
      \sum_{k'=1}^{k-1}4k'=2k(k-1),
  \end{equation*}
  we conclude that as long as there are at least $2+2k(k-1)$ points outside of $\tilde{C}_j$ in $S$, we can find a suitable choice of $\tilde{P}_1'=(a_1',\sqrt{m}b_1'),\dots,\tilde{P}_k'=(a_k',\sqrt{m}b_k')$. We then set $P'_j=(a_j',b_j')$ for $j=1,\dots,k$ and, again, consider the affine variety
    \begin{equation*}
    \mathcal{C}_k:=\left\{(x,y,d_1,\dots,d_k):Q_{m,P_n'}(x,y,d_{n})=0\text{ for all }n=1,\dots,k\right\}\cap(\tilde{C}_j'\times\mathbf{C}^k).
  \end{equation*}
  By Lemma~\ref{lem:Clineorcirc}, $\overline{\mathcal{C}}_k$ is an irreducible curve of degree between $2^k$ and $2^{k+1}$. Therefore, by Theorem~\ref{thm:curvecount}, $|S\cap\tilde{C}_j|\ll e^{O(k)}N^{O(2^{-k})}$ whenever $S\setminus\tilde{C}_j$ contains at least $2+2k(k-1)$ points.

  Thus, if, for all $1\leq j\leq \tau'$, we have that either $\tilde{C}_j$ is not a line or circle or $\tilde{C}_j$ is a line or circle and $S\setminus\tilde{C}_j$ contains at least $2+2k(k-1)$ points, then
  \begin{equation*}
      |S|\ll \tau'e^{O(k)}N^{O(2^{-k})}\ll e^{O(k)}N^{O(2^{-k})}.
  \end{equation*}
  This means that there exists a constant $c>0$ for which
  \begin{equation*}
      |S|\ll e^{ck}N^{1/c^k}.
  \end{equation*}
   Picking $k=\lfloor\log_c\log{N}\rfloor$ produces the bound $|S|\ll(\log{N})^{O(1)}$. 
   
   Finally, if, for some $1\leq j\leq \tau'$, $\tilde{C}_j$ is a line or circle and $S\setminus\tilde{C}_j$ contains fewer than $2+2k(k-1)$ points, as $N$ is chosen to be sufficiently large, we certainly have that all but at most $\ll(\log\log{N})^2$ points of $S$ lie on a single line or circle. This completes the proof of the theorem.
\end{proof}

\bibliographystyle{plain}
\bibliography{bib}

\end{document}